\numberwithin{equation}{section}
\newcommand{\R}{\mathbb R}
\newcommand{\T}{\mathbb{T}}
\def\E{\mathbb E}
\newcommand{\bx}{\mathbf{x}}
\newcommand{\bX}{\mathbf{X}}
\newcommand{\balpha}{\mathbf{\alpha}}
\newcommand{\bq}{\mathbf{q}}
\newcommand{\bp}{\mathbf{p}}
\newcommand{\bB}{\mathbf{B}}
\numberwithin{equation}{section}
\newtheorem{thm}{Theorem}[section]
\newtheorem{lem}[thm]{Lemma}
\newtheorem{prop}[thm]{Proposition}
\theoremstyle{definition}
\newtheorem{rmk}[thm]{Remark}
\def\ds{\displaystyle}
\def\div{\operatorname{div}}
\def\tr{\operatorname{tr}}
\def\bx{{\bf x}}
\def\ba{{\bf a}}
\def\bA{{\bf A}}
\def\bX{{\bf X}}
\def\balpha{{ \bf \alpha}}
\def \lg{\langle}
\def \rg{\rangle}
\newcommand{\norm}[1]{\left\Vert#1\right\Vert}
\renewcommand{\bar}{\overline}
\renewcommand{\tilde}{\widetilde}
\title[Particle Approximation for Conditional Control with Soft Killing]{Particle Approximation for Conditional Control with Soft Killing}
\date{\today}
\begin{document}

\author[R. Carmona]{René Carmona
\address{(R. Carmona) Department of Operations Research \& Financial Engineering, Princeton University, Princeton NJ 08544, USA}\email{rcarmona@princeton.edu}}

 \author[S. Daudin]{Samuel Daudin 
\address{(S. Daudin) Universit\'e Paris Cité, Laboratoire Jacques-Louis-Lions, Paris, France
}\email{samuel.daudin@u-paris.fr
}}

\begin{abstract}
The aim of this paper is to develop a particle approximation for the conditional control problem introduced by P.-L. Lions during his lectures at the Collège de France in November 2016. We focus on a \textit{soft killing} relaxed version of the problem, which admits a natural counterpart in terms of stochastic optimal control for a large number of interacting particles. Each particle contributes to the overall population cost through a time-evolving weight that depends on the trajectories of all the other particles. Using recently developed techniques for the analysis of the value function associated with the limiting problem, we establish sharp convergence rates as the number of particles tends to infinity.
\end{abstract}

\setcounter{tocdepth}{1}

\thanks{R. Carmona was partially supported by AFOSR under Grant No.FA9550-23-1-0324.}

\thanks{ S. Daudin  
acknowledges the financial support of the European Research Council (ERC) under the European Union’s Horizon Europe research and innovation program (AdG
ELISA project, Grant Agreement No. 101054746). Views and opinions expressed are however
those of the author(s) only and do not necessarily reflect those of the European Union or the
European Research Council Executive Agency. Neither the European Union nor the granting
authority can be held responsible for them. }

\maketitle

\section{Introduction}

Practical applications requiring the optimization of cost constrained systems abound, and 
a natural modelling approach to these problems is to work within the framework of the optimal control of stochastic processes with conditional costs. To the best of our knowledge, the first mathematical formulation of such problems was introduced by P.L. Lions in his lectures at the Coll\`ege de France in November 2016. See \cite{Lions2016}. As stated, the problem did not fit into the standard framework of stochastic control problems studied in the literature, so its solution required new ideas and even new technology. In his lectures, Lions emphasized the special role of the conditioning in the evaluation of the objective function to be optimized, and conjectured that this singular feature could invalidate the common \emph{folk theorem} in optimal control according to which optimization over Markovian and general open loop controls would give the same value.

While the closed-loop version of the large time limit and the \emph{finite volume limit} of the problem were discussed in \cite{AchdouLauriereLions}, the first rigorous comparison of the optimizations over closed-loop and open-loop controls was given in \cite{cc_IJM}  There, the authors considered the \emph{soft killing}  version of the problem and provided complete answers to the comparison of the optimizations over different families of controls. They reformulated the problem as an equivalent (thanks to a tailor-made superposition principle) deterministic control problem over a (nonlinear and nonlocal) dynamical system on the Wasserstein space of probability measures, and they characterizated the optimal solution by a forward backward system of Partial Differential Equations (PDEs) in the spirit of the Mean Field Game (MFG) systems.

In the present paper, we introduce a particle approximation for the problem, following recent advances in the theory of Mean-Field Optimal Control models. This theory, together with the related theory of Mean-Field Games, addresses optimal control problems for large populations of interacting particles, see \cite{CarmonaDelarue_book_I,CarmonaDelarue_book_II,CardaliaguetDelarueLasryLions} for introductory textbooks on the subject. Understanding the infinite-particle limit and obtaining precise quantitative estimates are challenging tasks that have recently attracted significant attention, see e.g. \cite{budhiraja2012,Lacker2017,DjetePossamaiTan,germain_pham_warin_2022,ddj2023}. In our setting, however, we have to face an additional difficulty: each particle carries an individual weight that evolves over time, depending on the trajectories of all the other particles. Despite this added difficulty, we extend the approach developed in \cite{ddj2023} and establish convergence rates. The starting point is the observation that the value function of the limiting problem—formulated over a space of probability measures—when projected onto a finite-dimensional space, formally satisfies a PDE similar to the one governing the value function for a finite number of particles. A comparison argument then yields an estimate of the difference between the two value functions. However, the value function in the limit lacks the necessary smoothness for a direct application of this method. To overcome this, we introduce a suitable regularization of the limiting value function and apply the projection argument to this smoothed version.

Since the completion of \cite{cc_IJM}, a significant interest appeared in the mean field models of McKean-Vlasov type where the interaction occurs through conditional distributions as opposed to plain distributions. First, the equality of the optimization values over closed-loop and open-loop controls was proved using a new conditional mimicking argument in \cite{cl_mimicking}.
Next, we can cite the recent works
\cite{HamblyJettkant_cc} and \cite{Zhang},
as well as \cite{jettkant} which, like the present paper, emphasizes special connections of those models with particle systems.

The paper is organized as follows. In Section \ref{sec:statementofthepb} we introduce the optimal control problems and state our main results. In Section \ref{sec:AnalysisVF} we provide a first analysis of the value function of the limiting problem. In particular we prove that it is Lipschitz and semi-concave with respect to appropriate weak metrics. This result relies on the optimality conditions for the control problem that are postponed to Section \ref{sec:FOC}. Section \ref{sec:formalargumentprojection} contains the formal projection argument leading to the rates of convergence under the assumption that the limiting value function is \textit{smooth enough}. We present the mollification procedure in Section \ref{sec:supconvolandproperties16/04} and Sections \ref{sec:hardineq}, \ref{sec:easyineq} are then devoted to the main estimates of the paper.

\section{Statement of the problem and main result}

\label{sec:statementofthepb}

Without much loss of generality, we shall assume that the running cost is merely a quadratic kinetic energy term, and the terminal cost is given by a function $g : \T^d \rightarrow \R$. Purely for convenience, we assume that the state space of our dynamical system is the torus $\T^d$. Again, there is no loss of generality compared to the original problem in \cite{Lions2016} or \cite{cc_IJM}. Indeed, one can assume that the torus is large enough to strictly contain the bounded open domain in which the problem was set-up in \cite{Lions2016} or \cite{cc_IJM}.
The \emph{soft killing} feature of the model is provided by a \emph{potential function} $V : \T^d \rightarrow \R^+$ that gives the rates at which the particles are killed as a function of their positions.
We study the following $N$-particle system. Given a filtered probability space $(\Omega, \mathcal{F}, \mathbb{F}, \mathbb{P})$ supporting $N$ $d$-dimensional independent $\mathbb{F}$-Brownian motions $\bB_t := (B_t^1, \dots, B_t^N)$, we seek to minimize, over a finite time horizon $[0,T]$,
\begin{equation} 
J^N(t_0, \bx_0, \ba_0, (\balpha_t)_{t \geq t_0}) := \ds \E \Bigl[ 
\int_{t_0}^T \sum_{i=1}^N  \frac{1}{2 N_t^{i}}  |\alpha_t^{i}|^2 dt + \sum_{i=1}^N \frac{1}{N_T^{i}}g(X_t^{i})   \Bigr] 
\label{eq:defncostJN228/04}
\end{equation}
over the set of progressively measurable controls $(\alpha_t^1)_{t \geq t_0},\dots,(\alpha_t^N)_{t \geq t_0}$ in $L^2(\Omega \times [t_0,T]; \R^d)$ with $\bX_t := (X_t^1, \dots, X_t^N)$ and $ \bA_t := (A_t^1, \dots, A_t^N)$ solutions of
\begin{equation} 
\label{fo:N-state_dynamics}
\left \{
\begin{array}{ll}
dX_t^{i} = \alpha_t^{i} dt + dB_t^{i}, \quad X_{t_0}^{i} = x_0^{i} \in \T^d \\
dA_t^{i} = V(X_t^{i})dt, \quad A_{t_0}^{i} = a_0^{i} \in \R \\
\end{array}
\right.
\end{equation}
where the weights $N^i_t$ at time $t$ are defined by
\begin{equation}
    \label{fo:weightsSto}
    \frac{1}{N_t^{i}} = \frac{\exp(-A_t^{i})}{\sum_{j=1}^N \exp(-A_t^j)}. 
\end{equation}
Notice that the variables $X^1_t, \dots,X_t^N$ live in $\T^d$ for all $t \in [t_0,T]$ while the variables  $A_t^1 ,\dots, A_t^N$ live in $\R$ for $t \in [t_0,T]$.

In order to put things in perspective, we highlight two special cases for the potential function $V$. When $V$ is a constant independent of $x$ and $\ba_0 = \bigl( 0, \dots,0 \bigr)$ then $1/N_t^{i} = 1/N$ for all $t \geq t_0$ and all $1 \leq i \leq N$, and we are in the classical mean-field setting. 
Alternatively, if $V$ is given by: 
\begin{equation}
   V(x) = 
    \left \{
    \begin{array}{ll}
        0 & \mbox{ if } x \in D \\
        + \infty & \mbox{ if } x \notin D
    \end{array}
    \right.
\label{eq:V0ouinfty28/04}
\end{equation}
for some open subset $D$ of $\T^d$, then $\frac{1}{N_t^{i}} = \frac{1}{ N_t} \mathbf{1}_{\tau^{i}_{D} >t} $ where $\tau^{i}_{D}$ is the first exit time of the $i$-th particle from the domain $D$, and $N_t := \sum_{j=1}^N  \mathbf{1}_{ \tau^j_{D} >t} $ is the number of particles that did not attempt to exit the domain before time $t$. This corresponds to the \textit{hard killing} regime addressed in \cite{Lions2016, AchdouLauriereLions}. Here, the potential function $V$ will be taken as a regular function (thus excluding the \textit{hard killing} regime).

For $N \geq 1$, we define the value function
$$
v^N(t_0, \bx_0, \ba_0) := \inf_{\balpha} J^N(t_0, \bx_0, \ba_0, (\alpha_t)_{t \geq t_0}).$$
For convenience we introduce the notation
\begin{equation}
    \label{fo:weights}
n^{i}[\ba] := e^{a^{i}} \sum_{j=1}^N e^{-a^{j}},  \quad 1 \leq i \leq N, \quad \ba \in \R^N, 
\end{equation}
so that $N^i_t=n^i[\bA_t]$.
The Hamiltonian of the system is given, for $\bx=(x^1, \dots,x^N) \in (\T^d)^N$, $\ba \in (\R)^N$,  $\bp =(p^1, \dots p^N) \in (\R^d)^N$ and $\bq = (q^1, \dots, q^N) \in \R^N$ by
\begin{align*} 
H^N(\bx,\ba, \bp,\bq)  &=\sup_{(\alpha^1, \dots, \alpha^N) \in (\R^d)^N} \Bigl \{ - \sum_{i=1}^N \alpha^{i} \cdot p^{i} - \sum_{i=1}^N V(x^{i})q^{i} - \frac{1}{2} \sum_{i=1}^N \frac{1}{n^{i}[\ba]} |\alpha^{i}|^2 \Bigr \} \\
&= - \sum_{i=1}^N V(x^{i})q^{i} +  \sum_{i=1}^N  \frac{n^{i}[\ba]}{2} |p^{i}|^2,
\end{align*}
and the supremum is achieved by taking $\alpha^{i}= - n^{i}[\ba] p^{i}$ for $1 \leq i \leq N$.
We are dealing with a standard stochastic optimal control problem in finite dimension, see \cite{flemingsoner}, and therefore, by dynamic programming, $v^N$ is the unique  viscosity solution to the semilinear HJB equation
\begin{equation}
\left \{
\begin{array}{ll}
 \displaystyle    -\partial_t v^N - \frac{1}{2} \sum_{i=1}^N \Delta_{x^{i}}v^N - \sum_{i=1}^N V(x^{i}) \partial_{a^{i}}v^N +  \sum_{i=1}^N  \frac{n^{i}[\ba]}{2} |\nabla_{x^{i}} v^N|^2 = 0  \mbox{         in } [t_0,T]\times (\T^d)^N \times (\R)^N, \\ 
\displaystyle    v^N(T, \bx, \ba) =  \sum_{i=1}^N \frac{1}{n^{i}[\ba]} g(x^{i}) \quad \mbox{ in } (\T^d)^N \times (\R)^N. 
\end{array}
\right.
\end{equation}

In order to analyze the infinite particle limit of the system, we consider the following optimal control problem for a non-local non-linear Fokker-Planck equation
\begin{equation} 
\mathcal{V}(t_0, \mu_0) := \inf_{(\mu,\alpha)} \int_{t_0}^T \int_{\T^d} \frac{1}{2} |\alpha_t(x)|^2 d\mu_t(x)dt + \int_{\T^d} g(x)d \mu_T(x),
\label{def:limitvfV}
\tag{Pb}
\end{equation}
where the infimum is taken over the couples $(\mu, \alpha)$ with $\mu \in \mathcal{C}([t_0,T], \mathcal{P}(\T^d))$ and $\alpha \in L^{\infty}( \mu_t \otimes dt; \R^d)$, (ie $\alpha : [t_0,T] \times \T^d \rightarrow \R^d$ is measurable and there exists $C>0$ such that $ \mu_t \otimes dt  \{ (t,x) \in [t_0,T] \times \T^d , \quad |\alpha_t(x) | \geq C  \} = 0 $ ) satisfying
\begin{equation} 
\partial_t \mu_t - \frac{1}{2} \Delta\mu_t +\div(\alpha_t \mu_t) + \bigl( V(x) - \langle V;\mu_t \rangle \bigr)\mu_t = 0, \mbox{ in } (t_0,T) \times \T^d, \quad \mu_{t_0} = \mu_0,
\label{eq:FPEIntro-28/04}
\end{equation}
in the sense of distributions. This control problem is precisely the one investigated in \cite{CLL23} as a relaxed version of the original conditional control problem of \cite{Lions2016,AchdouLauriereLions}. The solution $(\mu_t)_{t \in [t_0,T]}$ of \eqref{eq:FPEIntro-28/04} is given by the probabilistic representation
$$ \mu_t = \E \bigl[ \frac{e^{-A_t} \delta_{X_t}}{\E [e^{-A_t}]}  \bigr] $$
where $(A_t,X_t)$ solves the Stochastic Differential Equation (SDE)
$$ 
dX_t = \alpha_t(X_t) dt + dB_t, \quad  dA_t = V(X_t) \quad t \geq t_0 
$$
where $(B_t)_{t \geq t_0}$ is a $d$-dimensional $\mathbb{F}$-Brownian motion. The cost in \eqref{def:limitvfV} can then be rewritten 
\begin{equation} 
\E \Bigl[ \int_{t_0}^T \frac{1}{2} |\alpha_t(X_t)|^2 q_tdt + g(X_T) q_T \Bigr]
\label{eq:costsoftkilling28/04}
\end{equation}
with $q_t := e^{-A_t}/\E [e^{-A_t}]$. As explained in \cite{CLL23} we recover the \textit{hard killing} regime by taking $V$ as in \eqref{eq:V0ouinfty28/04} in which case the cost is, at least formally, given by 
$$ \int_{t_0}^T \frac{1}{2} \E \bigl[ |\alpha_t(X_t)|^2 \big| \tau_D > t ] dt + \E \bigl[ g(X_T) | \tau_D >T], $$
where $\tau_D$ is the first exit time of the process $(X_t)_{t \geq t_0}$ from the domain $D$.

From dynamic programming, we can infer that $\mathcal{V}$ is a viscosity solution to the following Hamilton-Jacobi equation, set in $[0,T] \times \mathcal{P}(\T^d)$
\begin{equation}
    \left \{
    \begin{array}{ll}
\displaystyle    -\partial_t \mathcal{V}(t,\mu) - \frac{1}{2}\int_{\T^d} \Delta_x \frac{\delta \mathcal{V}}{\delta \mu}(t,\mu,x) d\mu(x) + \displaystyle \frac{1}{2} \int_{\T^d} \bigl| \nabla_x \frac{\delta \mathcal{V}}{\delta \mu}(t,\mu,x) |^2 d\mu(x) \\
 \displaystyle  \hspace{100pt} + \int_{\T^d}  (V(x) - \lg V;\mu \rg) \frac{\delta \mathcal{V}}{\delta \mu}(t,\mu,x) d\mu(x) =0, \quad \mbox{ in } [0,T] \times \mathcal{P}(\T^d), \\
\mathcal{V}(T, \mu) = \int_{\T^d} g(x) d\mu(x), \qquad \mu\in\mathcal{P}(\T^d), 
    \end{array}
    \right.
\label{eq:MasterHJB16/04Intro}
\end{equation}
where the derivative with respect to the probability measure argument is the so-called \emph{linear } or \emph{flat} derivative. See for example \cite{CarmonaDelarue_book_I}. In the equation above we used the convenient notation
$$ \langle V; \mu \rangle := \int_{\T^d} V(x) d\mu(x) $$
for $\mu \in \mathcal{P}(\T^d)$ to avoid double integrals and lighten the notations. 

\vskip 4pt
Our main convergence result is the following:

\begin{thm}
\label{thm:main28/04}
    Assume that $V: \T^d \rightarrow \R^+$ and $g: \T^d \rightarrow \R$ belong to $\mathcal{C}^k(\T^d)$ for some $k > d/2 +2$. There is a constant $C$ depending on $T$, $\norm{V}_{\mathcal{C}^k(\T^d)}$ and $\norm{g}_{\mathcal{C}^k(\T^d)}$ such that, for every $(t, \bx, \ba) \in [0,T] \times (\T^d)^N \times \R^N$,
$$ 
\bigl| v^N(t,\bx, \ba) -  \mathcal{V}(t, \mu_{\bx,\ba}^N) \bigr| \leq C \sqrt{ \sum_{i=1}^N \Bigl( \frac{1}{n^{i}[\ba]} \Bigr)^2    }
$$
where the weights $n^{i}[\ba]$ were defined in \eqref{fo:weights}, and where the \emph{weighted empirical measures} $\mu^N_{\bx,\ba}$ are defined as
\begin{equation}
    \label{fo:empirical}
\mu_{\bx,\ba}^N := \sum_{i=1}^N \frac{1}{n^{i}[\ba]} \delta_{x^{i}}.
\end{equation}
\end{thm}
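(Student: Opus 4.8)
The plan is to run a projection-and-comparison argument at the level of Hamilton--Jacobi equations, extending the approach of \cite{ddj2023}. To the limiting value function I associate its lift $u^N(t,\bx,\ba):=\mathcal{V}(t,\mu^N_{\bx,\ba})$ to $(\T^d)^N\times\R^N$ through the weighted empirical measure map \eqref{fo:empirical}, and the starting observation (made precise in Section~\ref{sec:formalargumentprojection}) is that if $\mathcal{V}$ were twice differentiable in the measure argument then $u^N$ would be an almost-classical solution of the finite-dimensional HJB equation satisfied by $v^N$. Indeed, from $\sum_i n^i[\ba]^{-1}=1$, $\partial_{a^i}n^j[\ba]^{-1}=n^j[\ba]^{-1}\bigl(n^i[\ba]^{-1}-\delta_{ij}\bigr)$ and $\nabla_{x^i}u^N=n^i[\ba]^{-1}\nabla_x\frac{\delta\mathcal{V}}{\delta\mu}(t,\mu^N_{\bx,\ba},x^i)$, the chain rule shows that the first-order terms reproduce the corresponding terms of the master equation \eqref{eq:MasterHJB16/04Intro} exactly, namely $\sum_i\frac{n^i[\ba]}{2}|\nabla_{x^i}u^N|^2=\frac12\int_{\T^d}|\nabla_x\frac{\delta\mathcal{V}}{\delta\mu}|^2\,d\mu^N_{\bx,\ba}$ and $-\sum_iV(x^i)\partial_{a^i}u^N=\int_{\T^d}(V-\langle V;\mu^N_{\bx,\ba}\rangle)\frac{\delta\mathcal{V}}{\delta\mu}\,d\mu^N_{\bx,\ba}$, while the Laplacian term leaves a genuine remainder $\sum_i\Delta_{x^i}u^N=\int_{\T^d}\Delta_x\frac{\delta\mathcal{V}}{\delta\mu}\,d\mu^N_{\bx,\ba}+\sum_i n^i[\ba]^{-2}\,\tr\bigl[\nabla_y\nabla_z\tfrac{\delta^2\mathcal{V}}{\delta\mu^2}(t,\mu^N_{\bx,\ba},y,z)\big|_{y=z=x^i}\bigr]$. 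Since also $\sum_i n^i[\ba]^{-1}g(x^i)=\int g\,d\mu^N_{\bx,\ba}$, the comparison principle for the (uniformly parabolic, semilinear) HJB equation satisfied by $v^N$ would bound $|v^N-u^N|$ by $C\,\norm{\tfrac{\delta^2\mathcal{V}}{\delta\mu^2}}_{\mathcal{C}^2}\sum_i n^i[\ba]^{-2}$.

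Since $\mathcal{V}$ is, by Section~\ref{sec:AnalysisVF}, only Lipschitz and semiconcave for a suitable weak metric, I would replace it by a mollified function $\mathcal{V}_\varepsilon$, built by a sup-/inf-convolution procedure on $\mathcal{P}(\T^d)$ as in Section~\ref{sec:supconvolandproperties16/04}, enjoying the quantitative bounds $|\mathcal{V}_\varepsilon-\mathcal{V}|\le C\varepsilon$ and $\norm{\tfrac{\delta^2\mathcal{V}_\varepsilon}{\delta\mu^2}}_{\mathcal{C}^2}\le C/\varepsilon$, a terminal trace within $C\varepsilon$ of $\mu\mapsto\int g\,d\mu$, and---this is the crux---the fact that $\mathcal{V}_\varepsilon$ is a sub-solution (resp. super-solution) of \eqref{eq:MasterHJB16/04Intro} up to an additive error $C\varepsilon$, rather than $O(1)$ as the nonlinearity of the Hamiltonian a priori threatens. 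Running the computation of the previous paragraph on $\mathcal{V}_\varepsilon$ in place of $\mathcal{V}$ then makes $u^N_\varepsilon(t,\bx,\ba):=\mathcal{V}_\varepsilon(t,\mu^N_{\bx,\ba})$ a sub-solution (resp. super-solution) of the $v^N$-equation up to an error of order $\varepsilon+\varepsilon^{-1}\sum_i n^i[\ba]^{-2}$ with terminal data within $C\varepsilon$, so that the finite-dimensional comparison principle together with $|u^N_\varepsilon-\mathcal{V}(\cdot,\mu^N)|\le C\varepsilon$ yields
$$\bigl|v^N(t,\bx,\ba)-\mathcal{V}(t,\mu^N_{\bx,\ba})\bigr|\le C\Bigl(\varepsilon+\frac{1}{\varepsilon}\sum_{i=1}^N n^i[\ba]^{-2}\Bigr),$$
and the choice $\varepsilon=\bigl(\sum_i n^i[\ba]^{-2}\bigr)^{1/2}$ produces the announced rate. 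The two one-sided estimates are handled separately in Sections~\ref{sec:hardineq} and \ref{sec:easyineq}, the sub-solution side being the more delicate one since it must exploit the semiconcavity of $\mathcal{V}$ itself.

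I expect the main obstacle to be precisely the construction and analysis of $\mathcal{V}_\varepsilon$ together with the \emph{hard} inequality of Section~\ref{sec:hardineq}: on the non-locally-compact, non-linear space $\mathcal{P}(\T^d)$ one must choose a metric---this is where the regularity threshold $k>d/2+2$ on $V$ and $g$ enters, through the Sobolev embeddings needed to make the relevant negative-order norm control the derivatives $\nabla_x\frac{\delta\mathcal{V}}{\delta\mu}$ and $\nabla_y\nabla_z\frac{\delta^2\mathcal{V}}{\delta\mu^2}$ appearing above---for which $\mathcal{V}$ is simultaneously Lipschitz and semiconcave, and then show that the associated sup-convolution both gains $\mathcal{C}^2$-regularity in $\mu$ with the sharp $\varepsilon^{-1}$ blow-up of second derivatives and degrades the master equation only by $O(\varepsilon)$. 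A secondary but necessary point is to verify that every constant---in the finite-dimensional comparison argument, in the projection identities, and in the definition and estimates for $\mathcal{V}_\varepsilon$---is independent of $N$ and of $\min_i n^i[\ba]^{-1}$, so that the final bound depends on the configuration only through $\sum_i n^i[\ba]^{-2}$.
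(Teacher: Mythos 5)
Your plan for the ``hard'' half coincides with the paper's: sup-convolution of $\mathcal{V}$ in the $H^{-k}$ metric (the threshold $k>d/2+2$ entering exactly as you say), an approximate sub-solution property up to $C\varepsilon$, projection onto the weighted empirical measures, an error of order $\varepsilon+\varepsilon^{-1}\sum_i n^{i}[\ba]^{-2}$, and optimization in $\varepsilon$. Two technical caveats there: the sup-convolution is only $\mathcal{C}^{1,1}$ in $H^{-k}$, not twice differentiable in $\mu$, so the quantity $\nabla^2_{xy}\frac{\delta^2\mathcal{V}_\varepsilon}{\delta\mu^2}$ you invoke does not literally exist --- the paper replaces it by an almost-everywhere second derivative of the projected function (the technical lemma following Lemma \ref{lem:regvepsN19/04}) --- and, because the projected function is merely Lipschitz with bounded a.e.\ second derivatives in $x$, the paper does not apply a comparison principle directly but runs a verification argument via the It\^o--Krylov formula along arbitrary admissible controls of the particle system and then takes the infimum. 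These points are repairable and do not change the substance.

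The genuine gap is on the other side. You propose to get the super-solution half from an inf-convolution $\mathcal{V}_\varepsilon$ enjoying the same $C/\varepsilon$ second-order bounds and an $O(\varepsilon)$ super-solution property. For an inf-convolution to gain $\mathcal{C}^{1,1}$ regularity with constant $1/\varepsilon$ you would need $\mathcal{V}$ to be semi-\emph{convex} with respect to $\norm{\cdot}_{H^{-k}(\T^d)}$; Proposition \ref{prop:lipandSCV16/004} only gives (and one only expects) semi-\emph{concavity}, which is precisely what makes the \emph{sup}-convolution $\mathcal{C}^{1,1}$. Without a semiconvexity estimate your regularized function on the super-solution side carries no second-order bound, so the projection error $\varepsilon^{-1}\sum_i n^{i}[\ba]^{-2}$ cannot be justified and this half of the argument does not close. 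The paper avoids regularization altogether for the inequality $v^N\le \mathcal{V}+\cdots$ (Section \ref{sec:easyineq}): it fixes an optimal feedback control $\alpha$ of the limiting problem (smooth and bounded by Theorem \ref{thm:OptimalityConditons}), introduces the associated \emph{linear} cost functional $\Phi(t,\mu)$ for that fixed control, which solves a linear master equation, has $\nabla^2_{xy}\frac{\delta^2\Phi}{\delta\mu^2}$ bounded (by estimates in the spirit of Buckdahn--Li--Peng--Rainer and Chassagneux--Szpruch--Tse), and is a classical super-solution of \eqref{eq:MasterHJB16/04Intro}; running the projection argument on $\Phi$ and using $\Phi(t_0,\mu^N_{\bx,\ba})=\mathcal{V}(t_0,\mu^N_{\bx,\ba})$ at the initial point yields Proposition \ref{prop:easyineq19/04}, with the better rate $\sum_i n^{i}[\ba]^{-2}$ and only $\mathcal{C}^3$ data. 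To complete your proof you would need to replace the inf-convolution step by an argument of this kind (or establish a semiconvexity estimate for $\mathcal{V}$, which is not available).
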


\noindent
Theorem \ref{thm:main28/04} is a direct consequence of Propositions \ref{prop:hardineq19/04} and \ref{prop:easyineq19/04} proved in Sections \ref{sec:hardineq} and \ref{sec:easyineq}.

An immediate consequence is that, for every $R >0$ there is $C_R >0$ depending on $R$, $\norm{g}_{\mathcal{C}^k(\T^d)}$, $\norm{V}_{\mathcal{C}^k(\T^d)}$ and $T$ such that, for every $(t,\bx,\ba) \in [0,T] \times (\T^d)^N \times \R^N$ such that $|a|^{i} \leq R$ for all $1 \leq i \leq N$ it holds
$$ 
\bigl| v^N(t,\bx, \ba) -  \mathcal{V}(t, \mu_{\bx,\ba}^N) \bigr| \leq \frac{C_R}{\sqrt{N}}. 
$$
Theorem \ref{thm:main28/04} opens several perspectives. As alluded to earlier, the main one is to understand the regime of \textit{hard killing}. This could be tackled directly by setting $\frac{1}{N_t^{i}} = \frac{1}{ N_t} \mathbf{1}_{\tau^{i}_{D} >t} $ in the cost functionnal \eqref{eq:defncostJN228/04} or by  trying to prove the convergence of $v^N$ and $\mathcal{V}$ separately when the potential $V$ becomes singular as in \eqref{eq:V0ouinfty28/04}. In any case, finding estimates that guarantee that there are \textit{enough} particles remaining at time $T$ toemulate some form of mean-field limiting regime seems to be a tough challenge. To wit, in the control formulation, it could be beneficial to force all but one particle to exit the domain, leaving the surviving one at a location where the final cost $g$ is small.

\section{Analysis of the Limit value function}

\label{sec:AnalysisVF}

In this section we investigate the regularity properties of the value function $\mathcal{V} : [0,T] \times \mathcal{P}(\T^d) \rightarrow \R$ defined, for all $(t_0, \mu_0) \in [0,T] \times \mathcal{P}(\T^d)$ in \eqref{def:limitvfV}. The main result is Proposition \ref{prop:lipandSCV16/004} where we prove that $\mathcal{V}$ is Lipschitz and semi-concave with respect to appropriate  metrics. As highlighted in \cite{ddj2023}, this regularity is key to obtain sharp rates of convergence for the mean-field approximation. Compared to the results of \cite{ddj2023}, we need to address additional difficulties to obtain the regularity of the value function. In particular, the stability properties of the Fokker-Planck equation, see Lemma \ref{lem:stability15/04} below, are more delicate because of the non local term $\lg V;\mu \rg$ that creates a non-linearity in the equation.

The starting point is the following set of optimality conditions for the control problem \eqref{def:limitvfV} which extends  the analysis of \cite[Section 4.3.3]{CLL23}. The proof is postponed to Section \ref{sec:FOC}.

\begin{thm}
\label{thm:OptimalityConditons}
    Let us assume that $V : \T^d \rightarrow \R^+$ and $g: \T^d \rightarrow \R$ belong to $\mathcal{C}^k(\T^d)$ for some $k \geq 2$. For every $(t_0,\mu_0) \in [0,T] \times \mathcal{P}(\T^d)$, optimal solutions for \eqref{def:limitvfV} exist and satisfy $ \alpha = -\nabla u$ for a solution $(u,\mu)$ of the coupled PDE system
\begin{equation}
\left \{
    \begin{array}{ll}
    -\partial_t u_t - \frac{1}{2} \Delta u_t + \frac{1}{2} |\nabla u_t|^2 + (V- \langle V; \mu_t \rangle) u_t  - V \langle u_t; \mu_t \rangle = 0 \quad \mbox{ in } (t_0,T) \times \T^d, \\
    \partial \mu_t - \frac{1}{2} \Delta \mu_t - \div( \nabla u_t \mu_t) + (V- \langle V;\mu_t \rangle)\mu_t = 0 \quad \mbox{ in } (t_0,T) \times \T^d, \\
    \mu(t_0) = \mu_0, \quad u_T = g \quad \quad \mbox{ in } \T^d.
    \end{array}
\right.
\end{equation}
The adjoint variable $u$ belongs to $\mathcal{C}^{1,2}([0,T] \times\T^d)$ (one  continuous time derivative and two continuous space derivatives) and $x \mapsto u(t,x)$ belongs to $\mathcal{C}^k(\T^d)$ for all $t \in [t_0,T]$ and to $\mathcal{C}^{k+1}(\T^d)$ for all $t \in [t_0,T)$. More precisely, there is a non-decreasing function $\Lambda : \R^+ \rightarrow \R^+$, independent of $(t_0,\mu_0,V,g,\alpha,T)$ such that 
\begin{equation} 
\sup_{t \in [t_0,T]} \norm{ u_t}_{\mathcal{C}^k(\T^d)} + \sup_{t \in [t_0,T)} \sqrt{T-t} \norm{ u_t}_{\mathcal{C}^{k+1}(\T^d)} \leq \Lambda \bigl(T  + \norm{V}_{\mathcal{C}^k(\T^d)} + \norm{g }_{\mathcal{C}^k(\T^d)} \bigr).
\label{eq:estimateforu15/04}
\end{equation}
In particular, any optimal control satisfies
\begin{equation} 
\sup_{t \in [t_0,T]} \norm{ \alpha_t}_{\mathcal{C}^{k-1}(\T^d)}  + \sup_{t \in [t_0,T)} \sqrt{T-t} \norm{ \alpha_t}_{\mathcal{C}^k(\T^d)}  \leq \Lambda \bigl(T  + \norm{V}_{\mathcal{C}^k(\T^d)} + \norm{g }_{\mathcal{C}^k(\T^d)} \bigr).  
\label{eq:estimateforalpha15/04}
\end{equation}

\end{thm}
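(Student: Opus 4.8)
The plan is to establish the three assertions --- existence of optimizers, the characterization $\alpha=-\nabla u$ together with the forward--backward system, and the regularity estimate \eqref{eq:estimateforu15/04} --- by combining the direct method of the calculus of variations, a Lagrangian/duality computation for the first-order conditions, and a parabolic bootstrap for the adjoint equation. Throughout I treat the nonlocal quantities $\lambda(t):=\langle V;\mu_t\rangle$ and $\rho(t):=\langle u_t;\mu_t\rangle$ as bounded scalar functions of time, which is the device that lets the estimates be closed uniformly in $(t_0,\mu_0)$.

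\textbf{Existence.} Since $g$ is bounded, the pair consisting of the Fokker--Planck solution driven by $\alpha\equiv 0$ and the zero control is admissible, so $\mathcal V(t_0,\mu_0)$ is finite. For a minimizing sequence $(\mu^n,\alpha^n)$ the kinetic energies $\int_{t_0}^T\int_{\T^d}|\alpha^n_t|^2\,d\mu^n_t\,dt$ are bounded, hence the momenta $\alpha^n_t\mu^n_t$ are bounded in the space of vector-valued measures and, up to a subsequence, converge to a limit of the form $\alpha_t\mu_t$ with the Benamou--Brenier-type lower semicontinuity $\int_{t_0}^T\int_{\T^d}|\alpha_t|^2 d\mu_t\,dt\le\liminf_n\int_{t_0}^T\int_{\T^d}|\alpha^n_t|^2 d\mu^n_t\,dt$. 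Bounded energy plus the bounded killing term gives equi-Hölder continuity of the curves $t\mapsto\mu^n_t$ in $\mathcal P(\T^d)$, so (Arzel\`a--Ascoli) $\mu^n\to\mu$ in $\mathcal C([t_0,T],\mathcal P(\T^d))$ along a further subsequence; since $V$ is continuous this passes to the limit in the nonlocal term $\langle V;\mu^n_t\rangle$ and hence in \eqref{eq:FPEIntro-28/04}, so $(\mu,\alpha)$ is an optimizer. This extends \cite[Section 4.3.3]{CLL23}, the only new point being the stability of the nonlocal nonlinearity. The boundedness of $\alpha$ claimed in the statement is obtained a posteriori from $\alpha=-\nabla u$ and the regularity of $u$ below.

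\textbf{Optimality system.} Given an optimizer $(\mu,\alpha)$, perturb $\alpha^s=\alpha+s\beta$ with $\beta$ bounded and let $\dot\mu=\partial_s\mu^s|_{s=0}$ solve the linearized equation $\partial_t\dot\mu-\tfrac12\Delta\dot\mu+\div(\beta\mu)+\div(\alpha\dot\mu)+(V-\langle V;\mu_t\rangle)\dot\mu-\langle V;\dot\mu_t\rangle\mu=0$, $\dot\mu_{t_0}=0$. Introduce the adjoint state $u$ with $u_T=g$ solving $\partial_t u+\tfrac12\Delta u+\nabla u\cdot\alpha+\tfrac12|\alpha|^2-(V-\langle V;\mu_t\rangle)u+V\langle u_t;\mu_t\rangle=0$, and test the linearized equation against $u$. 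Integrating by parts in space and time, the boundary term is $\int g\,d\dot\mu_T$ and the nonlocal term $\langle V;\dot\mu_t\rangle\mu$ produces precisely the contribution $-V\langle u_t;\mu_t\rangle$; the $d\dot\mu$ integrals cancel against $\int_{t_0}^T\int\tfrac12|\alpha|^2\,d\dot\mu_t\,dt$ by the choice of $u$, leaving
$$
\frac{d}{ds}\Big|_{s=0}\!\Big(\int_{t_0}^T\!\!\int_{\T^d}\tfrac12|\alpha^s_t|^2 d\mu^s_t\,dt+\int_{\T^d} g\,d\mu^s_T\Big)=\int_{t_0}^T\!\!\int_{\T^d}(\alpha+\nabla u)\cdot\beta\,d\mu_t\,dt.
$$
Vanishing for all $\beta$ forces $\alpha=-\nabla u$ in $L^2(\mu_t\otimes dt)$, and substituting this into the equation for $u$ (so that $\nabla u\cdot\alpha+\tfrac12|\alpha|^2=-\tfrac12|\nabla u|^2$) yields, after multiplying by $-1$, exactly the HJB equation in the statement. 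Because making these integrations by parts rigorous requires more regularity than is available at this stage, in practice one first derives the relation on a mollified/weak level (as in \cite{CLL23,ddj2023}) and upgrades once the bootstrap below is in place.

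\textbf{Regularity of $u$ and the estimate \eqref{eq:estimateforu15/04}.} With $\lambda,\rho$ frozen as bounded functions of time, $u$ solves a semilinear backward parabolic equation with quadratic gradient nonlinearity, terminal datum $g\in\mathcal C^k$, and zeroth-order source $(V(x)-\lambda(t))u-V(x)\rho(t)$. First, at an interior spatial extremum of $x\mapsto u(t,x)$ the gradient term drops and $-\Delta u$ has the favorable sign, so the maximum principle gives $\tfrac{d}{dt}\|u_t\|_\infty\ge -C(\|V\|_\infty)\|u_t\|_\infty$ using $|\rho(t)|\le\|u_t\|_\infty$; integrating backward from $\|u_T\|_\infty=\|g\|_\infty$ yields $\sup_{t}\|u_t\|_\infty\le e^{C(\|V\|_\infty)(T-t_0)}\|g\|_\infty$. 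Next a Bernstein estimate --- deriving a parabolic differential inequality for $|\nabla u|^2$ and applying the maximum principle --- bounds $\sup_t\|\nabla u_t\|_\infty$ in terms of $\|g\|_{\mathcal C^1}$, $\|V\|_{\mathcal C^1}$, $T$ and the previous $L^\infty$ bound. With the gradient controlled, $\tfrac12|\nabla u|^2$ is a bounded source and Schauder / $L^p$ parabolic estimates promote $u$ to $\mathcal C^{1,2}$; differentiating the equation repeatedly and using $g,V\in\mathcal C^k$ at each step gives the uniform bound on $\sup_t\|u_t\|_{\mathcal C^k}$, while the smoothing of the heat semigroup for $t<T$ trades one spatial derivative for a factor $(T-t)^{-1/2}$ and yields the $\mathcal C^{k+1}$ part; tracking constants gives the universal nondecreasing $\Lambda$. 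Finally \eqref{eq:estimateforalpha15/04} is immediate since $\alpha=-\nabla u$.

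\textbf{Main obstacle.} The crux is twofold: making the perturbation/duality argument rigorous at low regularity (well-posedness and stability of the linearized and adjoint equations, and the bootstrap needed to justify the integrations by parts), and obtaining the quantitative, data-independent parabolic estimates in the presence of the two nonlocal zeroth-order terms --- especially the self-referential term $-V\langle u_t;\mu_t\rangle$, which prevents reading the $L^\infty$ and higher bounds off directly and forces them to be closed by a Grönwall argument in time.
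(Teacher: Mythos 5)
Your second and third steps (linearization/adjoint computation and the parabolic bootstrap with maximum principle, Bernstein estimate for $|\nabla u|^2$, and Duhamel/Schauder induction up to $\mathcal{C}^k$ with the $\sqrt{T-t}$-weighted $\mathcal{C}^{k+1}$ bound) are essentially the same ingredients the paper uses. The genuine gap is in how you get an admissible optimizer and in the ``any optimal control'' claim. The problem \eqref{def:limitvfV} is posed over controls in $L^{\infty}(\mu_t\otimes dt)$, but your direct method only produces a limit $\alpha\in L^2(\mu_t\otimes dt)$: nothing in your compactness argument keeps the $L^\infty$ bound, so the limit pair need not be admissible, and the infimum of the relaxed $L^2$ problem could a priori be strictly below $\mathcal{V}$, so the limit need not even be a minimizer of \eqref{def:limitvfV}. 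Your proposed fix --- ``boundedness a posteriori from $\alpha=-\nabla u$'' --- is circular, because the variational/adjoint argument (well-posedness of the linearized and adjoint equations, differentiability of $\epsilon\mapsto\mu^\epsilon$, as in \cite{CLL23}) is only available for bounded controls, i.e.\ it presupposes exactly what you want to conclude. The paper's whole construction is designed around this point: it introduces the truncated problems \eqref{PbR} with the hard constraint $|\alpha|\le R$ (which \emph{is} stable under the weak limit, checked by testing $\alpha^n\mu^n$ against continuous $f$), derives the optimality system with the truncated Hamiltonian $H^R$, and then uses the $R$-uniform $L^\infty$ and Lipschitz estimates of Lemma \ref{lem:AprioriEstimatesHJB14/04} to show the truncation is inactive for $R\ge R_0$, so that $\mathcal{V}=\mathcal{V}^{R_0}$ and the minimizer of \eqref{PbR} is a bounded minimizer of the original problem. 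Without some device of this kind your existence step does not close.

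The second missing idea concerns the assertion that \emph{every} optimal solution satisfies the system and the universal bound \eqref{eq:estimateforalpha15/04}. Your linearization, even once made rigorous, only treats the particular minimizer produced by your compactness argument, and only gives $\alpha=-\nabla u$ $\mu_t\otimes dt$-a.e.\ (off the support of $\mu$ the adjoint equation still contains the raw $\alpha$, so one must also argue, as the paper does, that $\mu$ is driven by a Lipschitz field and has full support for $t>t_0$ before reading off the HJB equation everywhere). To transfer the optimality conditions and the bound $R_0$ to an \emph{arbitrary} optimizer $(\tilde\mu,\tilde\alpha)$, the paper adds the penalization $\int_{t_0}^T q(\mu_t,\tilde\mu_t)\,dt$ with a distance-like $q$ (e.g.\ the squared $H^{-k}$ norm) whose linear derivative vanishes on the diagonal, shows by strict convexity that $(\tilde\mu,\tilde\alpha)$ is the unique minimizer of the penalized problem, and then reruns the truncation argument on that problem; the property \eqref{eq:proplinderivative14/04} guarantees the penalty drops out of the first-order conditions. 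Your proposal contains no mechanism for this step, yet it is precisely what the theorem (and its later use in Proposition \ref{prop:lipandSCV16/004}) requires.
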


We need some preliminary stability results for the Fokker-Planck equation \eqref{eq:conditionalFPE} with respect to the initial condition. 
Recall that, for $\mu^1, \mu^2 \in \mathcal{P}(\T^d)$, 
$$ \norm{ \mu^1 -\mu^2}_{\mathcal{C}^{-k}(\T^d)} := \sup_{ \varphi \in \mathcal{C}^k(\T^d), \norm{\varphi}_{\mathcal{C}^k(\T^d)} \leq 1} \int_{\T^d} \varphi(x) d(\mu^1-\mu^2)(x).$$

\begin{lem}
\label{lem:stability15/04}
Take $\alpha \in \mathcal{C}([t_0,T] \times \T^d; \R^d)$ with 
$$\sup_{t \in [t_0,T]} \norm{ \alpha_t}_{\mathcal{C}^{k-1}(\T^d)} +\sup_{t \in [t_0,T)} \sqrt{T-t} \norm{ \alpha_t}_{\mathcal{C}^k(\T^d)}  < +\infty.$$ 

$i)$   Assume that $\mu^{i}$, $i=1,2$ are two solutions to
    $$ \partial_t \mu_t^{i} - \frac{1}{2}\Delta \mu_t^{i} + \div(\alpha_t \mu_t^{i}) + (V - \langle V;\mu_t^{i} \rangle) \mu_t^{i} = 0, \quad \mu^{i}_{t_0} = \mu^{i}_0. $$
Then there is a non-decreasing function $\Lambda : \R^+ \rightarrow \R^+$ independent of $(t_0,T,k,\alpha, V,\mu^1_0, \mu^2_0)$ such that, 
$$ 
\sup_{t \in [t_0,T]} \norm{ \mu_t^1 - \mu_t^2 }_{\mathcal{C}^{-k}(\T^d)} \leq \Lambda \Bigl( T+ k + \sup_{t \in [t_0,T]} \norm{ \alpha_t}_{\mathcal{C}^{k-1}(\T^d)}  + \norm{ V}_{\mathcal{C}^{k-1}(\T^d)} \Bigr)  \norm{ \mu^1_{t_0} - \mu^2_{t_0}}_{\mathcal{C}^{-k}(\T^d)}.  
$$
$ii)$ Now, for $\lambda \in (0,1)$, define $\mu_{t_0}^{\lambda} := (1- \lambda) \mu_{t_0}^1 + \lambda \mu_{t_0}^2$ and let $(\mu_t^{\lambda})_{t \in [t_0,T]}$ be the solution to the Fokker-Planck equation starting from this initial position with the same control $\alpha$. Then, for a possibly new choice of $\Lambda$ satisfying the same properties,
\begin{align*}
\sup_{t \in [t_0,T]} \norm{ (1-\lambda) \mu_t^1 + \lambda \mu_t^2 - \mu_t^{\lambda}}_{\mathcal{C}^{-k}(\T^d)} &\leq 
 \Lambda \Bigl( T+ k + \sup_{t \in [t_0,T]} \norm{ \alpha_t}_{\mathcal{C}^{k-1}(\T^d)}  + \norm{ V}_{\mathcal{C}^{k}(\T^d)} \Bigr) \\
& \times \frac{\lambda (1-\lambda)}{2} \norm{ \mu_{t_0}^1 - \mu_{t_0}^2}^2_{\mathcal{C}^{-k}(\T^d)}.
\end{align*}

\end{lem}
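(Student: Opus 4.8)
The plan is to derive both parts from one linear stability estimate in the negative-order norm $\mathcal{C}^{-k}(\T^d)$, obtained by duality against an adjoint backward equation and closed by a Gr\"onwall argument that absorbs the nonlocal coupling.

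For part $i)$, I would first subtract the two Fokker-Planck equations. Writing $\nu_t := \mu_t^1-\mu_t^2$ and $\langle V;\mu_t^1\rangle\mu_t^1 - \langle V;\mu_t^2\rangle\mu_t^2 = \langle V;\mu_t^1\rangle\nu_t + \langle V;\nu_t\rangle\mu_t^2$, one sees that $\nu$ solves the \emph{linear}, but nonlocal, equation
\[
\partial_t\nu_t - \tfrac12\Delta\nu_t + \div(\alpha_t\nu_t) + \bigl(V - \langle V;\mu_t^1\rangle\bigr)\nu_t - \langle V;\nu_t\rangle\mu_t^2 = 0 ,
\]
the last term being the genuinely new feature compared with the purely linear Fokker-Planck analysis of \cite{ddj2023}. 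Next, fixing $s\in(t_0,T]$ and $\varphi\in\mathcal{C}^k(\T^d)$ with $\norm{\varphi}_{\mathcal{C}^k(\T^d)}\le 1$, I would introduce the adjoint solution $\psi$ of
\[
\partial_t\psi_t + \tfrac12\Delta\psi_t + \alpha_t\cdot\nabla\psi_t - \bigl(V - \langle V;\mu_t^1\rangle\bigr)\psi_t = 0 \text{ on } (t_0,s), \qquad \psi_s = \varphi ,
\]
which is a standard linear parabolic problem and, since $k\ge 2$, has a classical solution admissible as a test function. Testing the equation for $\nu$ against $\psi$ gives $\tfrac{d}{dt}\langle\nu_t;\psi_t\rangle = \langle V;\nu_t\rangle\langle\mu_t^2;\psi_t\rangle$, hence the representation $\langle\nu_s;\varphi\rangle = \langle\nu_{t_0};\psi_{t_0}\rangle + \int_{t_0}^s\langle V;\nu_t\rangle\langle\mu_t^2;\psi_t\rangle\,dt$. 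The only nontrivial input is the bound $\sup_{t\le s}\norm{\psi_t}_{\mathcal{C}^k(\T^d)}\le\Lambda(s-t_0+k+\sup_t\norm{\alpha_t}_{\mathcal{C}^{k-1}(\T^d)}+\norm{V}_{\mathcal{C}^{k-1}(\T^d)})$, which I would get from the Duhamel formula for the heat semigroup by keeping the drift and zeroth-order products in $\mathcal{C}^{k-1}(\T^d)$ and spending one half-derivative of heat-kernel smoothing (at the price of an integrable singularity $(s-t)^{-1/2}$), followed by a singular Gr\"onwall inequality; this uses only $\mathcal{C}^{k-1}$ norms of the coefficients. Inserting this, using $|\langle V;\nu_t\rangle|\le\norm{V}_{\mathcal{C}^k(\T^d)}\norm{\nu_t}_{\mathcal{C}^{-k}(\T^d)}$ and $|\langle\mu_t^2;\psi_t\rangle|\le\norm{\psi_t}_{\mathcal{C}^0(\T^d)}$, and taking the supremum over $\varphi$ yields $\norm{\nu_s}_{\mathcal{C}^{-k}(\T^d)}\le\Lambda\norm{\nu_{t_0}}_{\mathcal{C}^{-k}(\T^d)} + C\int_{t_0}^s\norm{\nu_t}_{\mathcal{C}^{-k}(\T^d)}\,dt$, and a final Gr\"onwall step concludes.

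For part $ii)$, I would set $m_t := (1-\lambda)\mu_t^1 + \lambda\mu_t^2$ and $w_t := m_t - \mu_t^\lambda$, so that $w_{t_0}=0$; note $\mu_t^1,\mu_t^2,\mu_t^\lambda$ stay probability measures, so $m_t$ does too and $\norm{w_t}_{\mathcal{C}^{-k}(\T^d)}\le 2$. The diffusion, transport and local term $V\mu$ are affine in $\mu$ and cancel; the only non-affine term $\langle V;\mu\rangle\mu$ is handled through the elementary identity
\[
(1-\lambda)\langle V;\mu^1\rangle\mu^1 + \lambda\langle V;\mu^2\rangle\mu^2 - \langle V;m\rangle m = \lambda(1-\lambda)\langle V;\mu^1-\mu^2\rangle(\mu^1-\mu^2)
\]
combined with the substitution $\mu_t^\lambda = m_t - w_t$. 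This shows that $w$ solves a linear equation in $w$ with zero initial datum, a source $S_t := \lambda(1-\lambda)\langle V;\mu_t^1-\mu_t^2\rangle(\mu_t^1-\mu_t^2)$, and coefficients (including the nonlocal linear term $w\mapsto\langle V;w_t\rangle m_t$) that are controlled by $V$, $\alpha$ and the crude bound on $w$. By part $i)$, $\norm{S_t}_{\mathcal{C}^{-k}(\T^d)}\le\lambda(1-\lambda)\norm{V}_{\mathcal{C}^k(\T^d)}\norm{\mu_t^1-\mu_t^2}_{\mathcal{C}^{-k}(\T^d)}^2\le\Lambda\,\lambda(1-\lambda)\norm{\mu_{t_0}^1-\mu_{t_0}^2}_{\mathcal{C}^{-k}(\T^d)}^2$, and part $i)$ also makes $t\mapsto\langle V;w_t\rangle$ a bounded scalar. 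Re-running the duality-plus-(singular)-Gr\"onwall argument of part $i)$ for the $w$-equation with zero datum and source $S_t$ then gives $\sup_t\norm{w_t}_{\mathcal{C}^{-k}(\T^d)}\le\Lambda(\cdots)\sup_t\norm{S_t}_{\mathcal{C}^{-k}(\T^d)}$, which is the claimed estimate.

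I expect the main obstacle, in both parts, to be the nonlocal coefficient $\langle V;\cdot\rangle$. In part $i)$ it makes the adjoint identity self-referential --- its right-hand side still involves the unknown $\norm{\nu_t}_{\mathcal{C}^{-k}(\T^d)}$ --- which is precisely what the final Gr\"onwall step must absorb, and it forces the stability constant to depend on $V$ and on the horizon. In part $ii)$ it is the only non-affine term, and the real content is identifying its second-order remainder under affine interpolation with the rank-one quadratic $\lambda(1-\lambda)\langle V;\mu^1-\mu^2\rangle(\mu^1-\mu^2)$, after which part $ii)$ collapses to the linear estimate of part $i)$ with a source. A secondary but necessary technical point is obtaining the $\mathcal{C}^k$ regularity of the adjoint equation from only $\mathcal{C}^{k-1}$ bounds on $\alpha$ and $V$, which rests on trading spatial regularity for integrable time singularities in Duhamel's formula.
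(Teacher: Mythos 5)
Your proof is correct in substance and runs on the same machinery as the paper's: duality against a backward linear parabolic equation whose $\mathcal{C}^k$ norm is controlled by Duhamel's formula plus a singular Gr\"onwall inequality (this is exactly Lemma \ref{lem:estimatelinbackwardeq15/04}), followed by a Gr\"onwall argument in $\norm{\cdot}_{\mathcal{C}^{-k}(\T^d)}$. The genuine difference is how the nonlocal term is distributed. The paper puts the full linearization at $\mu^1$ into the adjoint, i.e.\ its $\phi$ solves \eqref{eq:linearbacwardequation} \emph{including} the nonlocal term $-V\lg \phi_t;\mu^1_t\rg$; the resulting error term is then quadratic in $\mu^1-\mu^2$, namely $\int \lg V;\mu^1_s-\mu^2_s\rg\lg \phi_s;\mu^1_s-\mu^2_s\rg\,ds$, so $\lg V;\cdot\rg$ can be bounded crudely by $2\norm{V}_{\mathcal{C}^0(\T^d)}$ and only $\norm{V}_{\mathcal{C}^{k-1}(\T^d)}$ enters, through the adjoint estimate. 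You instead keep a purely local adjoint and treat $\lg V;\nu_t\rg\,\mu^2_t$ as a source: this buys you a standard linear parabolic adjoint (no fixed-point argument needed for its well-posedness, which the paper imports from \cite{CLL23}), but it forces the pairing $|\lg V;\nu_t\rg|\leq \norm{V}_{\mathcal{C}^k(\T^d)}\norm{\nu_t}_{\mathcal{C}^{-k}(\T^d)}$, so your constant in part $i)$ depends on $\norm{V}_{\mathcal{C}^{k}(\T^d)}$ rather than the stated $\norm{V}_{\mathcal{C}^{k-1}(\T^d)}$ --- a mild weakening of the statement, harmless for the rest of the paper since part $ii)$ and all downstream estimates use $\norm{V}_{\mathcal{C}^k(\T^d)}$ anyway, but worth being aware of. For part $ii)$ your route is, if anything, cleaner than the paper's: the identity $(1-\lambda)\lg V;\mu^1\rg\mu^1+\lambda\lg V;\mu^2\rg\mu^2-\lg V;m\rg m=\lambda(1-\lambda)\lg V;\mu^1-\mu^2\rg(\mu^1-\mu^2)$ is precisely the content of the paper's ``tedious rearranging'', and it reduces part $ii)$ to part $i)$ with a source whose $\mathcal{C}^{-k}$ norm is quadratic in $\norm{\mu^1_{t_0}-\mu^2_{t_0}}_{\mathcal{C}^{-k}(\T^d)}$ by part $i)$. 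One small caution there: in the Gr\"onwall step for $w$, the nonlocal coefficient must be absorbed via $|\lg V;w_t\rg|\leq \norm{V}_{\mathcal{C}^k(\T^d)}\norm{w_t}_{\mathcal{C}^{-k}(\T^d)}$, not merely via the crude bound ``$\lg V;w_t\rg$ is a bounded scalar'' that you mention in passing --- otherwise that term would act as an $O(1)$ source and destroy the quadratic estimate; your ``re-run part $i)$'' step does handle it this way, so the plan goes through.
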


\begin{proof}
 $i)$   We take $\varphi \in \mathcal{C}^k(\T^d)$ and we introduce, for some $t_1 \in (t_0,T]$, $\phi : [t_0,t_1] \times \R^d \rightarrow \R$ the solution to the linear backward equation
\begin{equation} 
-\partial_t \phi_t - \frac{1}{2}\Delta \phi_t - \alpha_t \cdot \nabla \phi_t + (V-\langle V;\mu_t^1 \rangle) \phi_t - V \langle \phi_t; \mu_t^1 \rangle = 0, \quad \phi_{t_1} = \varphi.
\label{eq:linearbacwardequation}
\end{equation}
Well posedness for this equation is discussed in Lemma \ref{lem:estimatelinbackwardeq15/04} below. By duality we find, for $i=1,2$, 
$$ \int_{\T^d} \varphi(x) d\mu_{t_1}^{i}(x) = \int_{\T^d} \phi_{t_0}(x)d \mu_{t_0}^{i}(x) + \int_{t_0}^{t_1}( \langle V; \mu_t^{i} - \mu_t^1 \rangle \langle \phi_t; \mu_t^{i} \rangle - \langle \phi_t ; \mu_t^1 \rangle \langle V, \mu_t^{i} \rangle ) dt,   $$
and therefore,
$$ \int_{\T^d} \varphi(x) d(\mu^1_{t_1} - \mu^2_{t_1})(x) = \int_{\T^d} \phi_{t_0}(x)d (\mu^1_{t_0} - \mu^2_{t_0})(x) - \int_{t_0}^{t_1} \langle V; \mu_s^1 - \mu_s^2 \rangle \langle \phi_s; \mu_s^1 - \mu_s^2 \rangle ds. $$
Using the estimate on $\norm{ \phi_{t_0}}_{\mathcal{C}^k(\T^d)}$ from Lemma \ref{lem:estimatelinbackwardeq15/04} below, as well as the boundness of $V$ we find a non deacreasing function $\Lambda$ as in the statement and from which we omit the argument, such that, 
$$ \int_{\T^d} \varphi(x) d(\mu^1_{t_1} - \mu^2_{t_1})(x) \leq \Lambda  \norm{ \varphi}_{\mathcal{C}^{k}(\T^d)}  \Bigl( \norm{ \mu^1_{t_0} - \mu^2_{t_0}}_{\mathcal{C}^{-k}(\T^d)} + \int_{t_0}^{t_1} \norm{ \mu^1_s - \mu^2_s}_{\mathcal{C}^{-k}} ds \Bigr).  $$
Taking the supremum over the functions $\varphi \in \mathcal{C}^k$ with $\norm{ \varphi}_{\mathcal{C}^k} \leq 1$ and then applying Grönwall's Lemma we find
$$ \sup_{t_1 \in [t_0,T]} \norm{ \mu^1_{t_1} - \mu^2_{t_1}}_{\mathcal{C}^{-k}(\T^d)} \leq \Lambda \norm{\mu^1_{t_0} - \mu^2_{t_0}}_{\mathcal{C}^{-k}(\T^d)}, $$
which concludes the proof of first part of the lemma.

$ii)$ For the second part we argue again by duality. For some $t_1 \in (t_0,T]$ and some $\varphi \in \mathcal{C}^k(\T^d)$ with $\norm{\varphi}_{\mathcal{C}^k(\T^d)} \leq 1$ we let $\phi : [t_0,t_1] \times \T^d \rightarrow \R$ be the solution to the backward equation \eqref{eq:linearbacwardequation}. We compute, by duality,
\begin{align*}
    \int_{\T^d} & \varphi(x) d ( (1-\lambda) \mu_{t_1}^1 + \lambda \mu_{t_1}^2 - \mu_{t_1}^{\lambda})(x) = \int_{\T^d} \phi_{t_0}(x) d ( (1-\lambda) \mu_{t_0}^1 + \lambda \mu_{t_0}^2 - \mu_{t_0}^{\lambda})(x) \\
    & -(1-\lambda) \int_{t_0}^{t_1} \langle \phi_t ; \mu_t^1\rangle \langle V; \mu_t^1 \rangle dt  + \lambda \int_{t_0}^{t_1} \langle V; \mu_t^2 - \mu_t^1 \rangle \langle \phi_t ; \mu_t^2 \rangle dt - \lambda \int_{t_0}^{t_1} \langle \phi_t, \mu_t^1 \rangle \langle V; \mu_t^2 \rangle dt \\
    &-\int_{t_0}^{t_1}( \langle V; \mu_t^{\lambda} - \mu_t^1 \rangle \langle \phi_t; \mu_t^{\lambda} \rangle - \langle \phi_t ; \mu_t^1 \rangle \langle V, \mu_t^{\lambda} \rangle ) dt.
\end{align*}
The first term cancels out by definition of $\mu_{t_0}^{\lambda}$ and then tedious rearranging leads to 
\begin{align*}
     \int_{\T^d} \varphi(x) d ( (1-\lambda) \mu_{t_1}^1 &+ \lambda \mu_{t_1}^2 - \mu_{t_1}^{\lambda})(x) = \int_{t_0}^{t_1} \lg \phi_t;\mu_t^1 \rg \lg V; \mu_t^{\lambda} - (1-\lambda) \mu_t^1 - \lambda \mu_t^2 \rg dt \\
     & - \int_{t_0}^{t_1} \lg \phi_t ; \mu_t^{\lambda} - (1-\lambda) \mu_t^1 - \lambda \mu_t^2 \rg \lg V; \lambda(\mu_t^2 - \mu_t^1) \rg dt \\
    &+ \lambda (1-\lambda) \int_{t_0}^{t_1} \lg \phi_t ; \mu_t^2- \mu_t^1 \rg \lg V; \mu_t^2 - \mu_t^1 \rg dt. 
\end{align*}
Using the $\mathcal{C}^k$ regularity of $V$ and $\phi$ we get 
\begin{align*}
     \int_{\T^d} \varphi(x) d ( (1-\lambda) \mu_{t_1}^1 &+ \lambda \mu_{t_1}^2 - \mu_{t_1}^{\lambda})(x) \leq  \int_{t_0}^{t_1} \norm{\phi_t}_{\mathcal{C}^0(\T^d)} \norm{V}_{\mathcal{C}^k(\T^d)} \norm{ \mu_t^{\lambda} - (1-\lambda) \mu_t^1 - \lambda \mu_t^2}_{\mathcal{C}^{-k}(\T^d)} dt \\
     & + \int_{t_0}^{t_1}  \norm{\phi_t}_{\mathcal{C}^k(\T^d)} \norm{ \mu_t^{\lambda} - (1-\lambda) \mu_t^1 - \lambda \mu_t^2}_{\mathcal{C}^{-k}(\T^d)}   \times 2 \lambda \norm{ V}_{\mathcal{C}^0(\T^d)}  \\
    &+ \lambda (1-\lambda) \int_{t_0}^{t_1}  \norm{\phi_t}_{\mathcal{C}^k(\T^d)} \norm{V}_{\mathcal{C}^k(\T^d)} \norm{ \mu_t^2- \mu_t^1}^2_{\mathcal{C}^{-k}(\T^d)} dt. 
\end{align*}
Now we use step $i)$ and the estimate of Lemma \ref{lem:estimatelinbackwardeq15/04} to find $\Lambda$ as in the statement such that 
\begin{align*}  
\int_{\T^d} \varphi(x) d ( (1-\lambda) \mu_{t_1}^1 + \lambda \mu_{t_1}^2 - \mu_{t_1}^{\lambda})(x) &\leq \Lambda \norm{\varphi}_{\mathcal{C}^k(\T^d)} \Bigl( \int_{t_0}^{t_1} \norm{ \mu_t^{\lambda} - (1-\lambda) \mu_t^1 - \lambda \mu_t^2}_{\mathcal{C}^{-k}(\T^d)} dt \\
&+ \frac{\lambda (1-\lambda)}{2}  \sup_{t \in [t_0,T]} \norm{ \mu_t^1 - \mu_t^2 }^2_{\mathcal{C}^{-k}(\T^d)} \Bigr).
\end{align*}
Taking the supremum over $\varphi$ with $\norm{\varphi}_{\mathcal{C}^k(\T^d)} \leq 1$ we infer
\begin{align*}
\norm{ (1-\lambda) \mu_{t_1}^1 + \lambda \mu_{t_1}^2 - \mu_{t_1}^{\lambda} }_{\mathcal{C}^{-k}(\T^d)} & \leq  \Lambda \int_{t_0}^{t_1} \norm{ (1-\lambda) \mu_t^1 + \lambda \mu_t^2 - \mu_t^{\lambda} }_{\mathcal{C}^{-k}(\T^d)} dt  \\
&+ \frac{\Lambda}{2} \lambda (1-\lambda) \sup_{t \in [t_0,T]} \norm{ \mu_t^1 - \mu_t^2 }^2_{\mathcal{C}^{-k}(\T^d)}  
\end{align*}
and we conclude, using Grönwall's lemma and the  part $i)$ of the Lemma that, for a possibly new function $\Lambda$,
$$ \sup_{t \in [t_0,T]} \norm{ (1-\lambda) \mu_t^1 + \lambda \mu_t^2 - \mu_t^{\lambda} }_{\mathcal{C}^{-k}(\T^d)} \leq \Lambda \frac{\lambda (1-\lambda)}{2}  \norm{ \mu_{t_0}^1 - \mu_{t_0}^2 }_{\mathcal{C}^{-k}(\T^d)}^2.  $$
\end{proof}

In the proof above we used the folowing estimate.
\begin{lem}
\label{lem:estimatelinbackwardeq15/04}
Take $t_1 \in [t_0,T]$, $\mu \in \mathcal{C}([t_0,t_1], \mathcal{P}(\T^d))$ and $\varphi \in \mathcal{C}^k(\T^d)$ for some $k \geq 1$. Let $\phi : [t_0,t_1] \times \T^d \rightarrow \R$ be the classical solution to 
    \begin{equation} 
-\partial_t \phi_t - \frac{1}{2}\Delta \phi_t - \alpha_t \cdot \nabla \phi_t + (V-\langle V;\mu_t^1 \rangle) \phi_t - V \langle \phi_t; \mu_t^1 \rangle = 0, \quad \phi_{t_1} = \varphi.
\end{equation}
Then, there is a non-decreasing function $\Lambda: \R^+ \rightarrow \R^+$, independent of $(t_0,t_1,T,\mu,V,\varphi,k)$ such that 
$$ \sup_{t \in [t_0,t_1]} \norm{ \phi_t}_{\mathcal{C}^k(\T^d)} \leq \Lambda \bigl( T + k + \sup_{t \in [t_0,t_1]} \norm{ \alpha_t}_{\mathcal{C}^{k-1}(\T^d)} + \norm{V}_{\mathcal{C}^{k-1}(\T^d)} \bigr) \norm{ \varphi}_{\mathcal{C}^k(\T^d)}. $$
\end{lem}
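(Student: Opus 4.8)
The plan is to reverse time so that the backward equation becomes a forward parabolic one, pass to its mild (Duhamel) formulation, and exploit the smoothing of the heat semigroup to recover the single spatial derivative that is lost on the coefficients $\alpha$ and $V$; a weakly singular Grönwall inequality then closes the estimate. (Here $\alpha$ is the given drift coefficient, which as in the situations where this lemma is applied satisfies $\sup_{t\in[t_0,t_1]}\norm{\alpha_t}_{\mathcal{C}^{k-1}(\T^d)}<\infty$.)

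First I would set $s:=t_1-t$, $\tilde\phi_s:=\phi_{t_1-s}$, $b_s:=\alpha_{t_1-s}$ and $m_s:=\mu_{t_1-s}$, so that $\tilde\phi_0=\varphi$ and
\begin{equation*}
\partial_s\tilde\phi_s=\frac{1}{2}\Delta\tilde\phi_s+b_s\cdot\nabla\tilde\phi_s-(V-\langle V;m_s\rangle)\tilde\phi_s+V\langle\tilde\phi_s;m_s\rangle .
\end{equation*}
Since $\phi$ is a classical solution with continuous coefficients, $\tilde\phi$ admits the Duhamel representation
\begin{equation*}
\tilde\phi_s=e^{s\Delta/2}\varphi+\int_0^s e^{(s-r)\Delta/2}\Bigl[\,b_r\cdot\nabla\tilde\phi_r-(V-\langle V;m_r\rangle)\tilde\phi_r+V\langle\tilde\phi_r;m_r\rangle\,\Bigr]\,dr ,
\end{equation*}
where $(e^{\sigma\Delta/2})_{\sigma\ge0}$ is the heat semigroup on $\T^d$. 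I will use two elementary facts: $\norm{e^{\sigma\Delta/2}h}_{\mathcal{C}^k(\T^d)}\le\norm{h}_{\mathcal{C}^k(\T^d)}$ (convolution with a probability density), and $\norm{e^{\sigma\Delta/2}h}_{\mathcal{C}^k(\T^d)}\le C_d\,(\sigma^{-1/2}+1)\,\norm{h}_{\mathcal{C}^{k-1}(\T^d)}$, obtained by writing a derivative of order $k$ of $p_\sigma*h$ as $(\partial_i p_\sigma)*(\partial^{\beta'}h)$ with $|\beta'|=k-1$ and using $\norm{\nabla p_\sigma}_{L^1(\T^d)}\le C_d(\sigma^{-1/2}+1)$.

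Next I would estimate the three source terms in $\mathcal{C}^{k-1}(\T^d)$ via the Leibniz inequality $\norm{fg}_{\mathcal{C}^m(\T^d)}\le C_{m,d}\norm{f}_{\mathcal{C}^m(\T^d)}\norm{g}_{\mathcal{C}^m(\T^d)}$: this gives $\norm{b_r\cdot\nabla\tilde\phi_r}_{\mathcal{C}^{k-1}(\T^d)}\le C_{k,d}\norm{b_r}_{\mathcal{C}^{k-1}(\T^d)}\norm{\tilde\phi_r}_{\mathcal{C}^k(\T^d)}$, then $\norm{(V-\langle V;m_r\rangle)\tilde\phi_r}_{\mathcal{C}^{k-1}(\T^d)}\le C_{k,d}\norm{V}_{\mathcal{C}^{k-1}(\T^d)}\norm{\tilde\phi_r}_{\mathcal{C}^{k-1}(\T^d)}$, and — the easiest, since $\langle\tilde\phi_r;m_r\rangle$ is a scalar bounded by $\norm{\tilde\phi_r}_{\mathcal{C}^0(\T^d)}$ — $\norm{V\langle\tilde\phi_r;m_r\rangle}_{\mathcal{C}^{k-1}(\T^d)}\le\norm{V}_{\mathcal{C}^{k-1}(\T^d)}\norm{\tilde\phi_r}_{\mathcal{C}^0(\T^d)}$; in particular the nonlocal term never needs more than the $\mathcal{C}^0$ norm of the solution, so no maximum principle is required. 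Applying the first semigroup bound to $e^{s\Delta/2}\varphi$ and the $(\sigma^{-1/2}+1)$ bound under the integral gives, with $M:=\sup_{t\in[t_0,t_1]}\norm{\alpha_t}_{\mathcal{C}^{k-1}(\T^d)}+\norm{V}_{\mathcal{C}^{k-1}(\T^d)}$,
\begin{equation*}
\norm{\tilde\phi_s}_{\mathcal{C}^k(\T^d)}\le\norm{\varphi}_{\mathcal{C}^k(\T^d)}+C_{k,d}(1+M)\int_0^s\bigl((s-r)^{-1/2}+1\bigr)\norm{\tilde\phi_r}_{\mathcal{C}^k(\T^d)}\,dr .
\end{equation*}
The kernel $\sigma\mapsto\sigma^{-1/2}+1$ lies in $L^1(0,T)$ with norm at most $2\sqrt T+T$, and $\sup_s\norm{\tilde\phi_s}_{\mathcal{C}^k(\T^d)}$ is finite a priori because $\phi$ is a classical solution on the compact interval $[t_0,t_1]$; hence a standard weakly singular (fractional) Grönwall inequality yields $\sup_s\norm{\tilde\phi_s}_{\mathcal{C}^k(\T^d)}\le\Lambda\bigl(T+k+M\bigr)\norm{\varphi}_{\mathcal{C}^k(\T^d)}$ for a non-decreasing $\Lambda:\R^+\to\R^+$ independent of $(t_0,t_1,T,\mu,V,\varphi,k)$, which is exactly the assertion.

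The main obstacle is really just the bookkeeping that makes the one-derivative gain of $e^{\sigma\Delta/2}$ exactly compensate the one-derivative loss on $\alpha$ and $V$ that enters through the product estimates; once that is arranged, the time reversal, the handling of the nonlocal term, and the fractional Grönwall step are all routine. (A parabolic Schauder bootstrap would yield the same estimate, but only after introducing Hölder exponents, which the Duhamel route avoids.)
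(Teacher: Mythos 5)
Your proof is correct and runs on the same engine as the paper's: Duhamel's formula with the heat kernel, the one-derivative smoothing bound with the $\sigma^{-1/2}$ singularity, and a weakly singular Gr\"onwall inequality (the paper follows \cite[Lemma A.2]{ddj2023} in exactly this way). The structural difference is that the paper first proves a $\mathcal{C}^0$ bound by a maximum-principle argument (the one used for $u^R$ in Lemma \ref{lem:AprioriEstimatesHJB14/04}) and then inducts on the number of derivatives, whereas you close the estimate in a single pass at the $\mathcal{C}^k$ level, observing that the nonlocal term $V\langle\phi_t;\mu_t\rangle$ only costs $\norm{\phi_t}_{\mathcal{C}^0(\T^d)}\le\norm{\phi_t}_{\mathcal{C}^k(\T^d)}$ and can simply be absorbed into the Gr\"onwall kernel; this removes both the maximum principle and the induction, at no cost to the final constant (your Leibniz and semigroup constants depend on $k$ only through quantities non-decreasing in $k$, so they fit into $\Lambda(T+k+\cdots)$ as required, and the time reversal is purely cosmetic). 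The one point you should state more carefully is the qualitative input to the singular Gr\"onwall step: being a classical ($\mathcal{C}^{1,2}$) solution does not by itself give finiteness of $\sup_s\norm{\tilde\phi_s}_{\mathcal{C}^k(\T^d)}$ when $k>2$; one needs that the solution produced by the fixed-point construction of \cite[Lemma 8]{CLL23} is $\mathcal{C}^k$ in space (for instance via a short parabolic bootstrap from the same Duhamel formula). The paper leaves this equally implicit when it differentiates Duhamel's formula $k$ times, so this is a presentational fix rather than a gap in your argument.
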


\begin{proof}
Existence of a solution is given by \cite[Lemma 8]{CLL23} thanks to a fixed point argument. We  claim that 
 $$ \sup_{t \in [t_0,t_1]} \norm{ \phi_t}_{\mathcal{C}^0(\T^d)} \leq \Lambda \bigl( T + \norm{V}_{\mathcal{C}^0(\T^d)} \bigr) \norm{\varphi}_{\mathcal{C}^0(\T^d)} $$
for a non-decreasing function $\Lambda$ as in the statement. This follows, for instance, from the same argument leading to the $\mathcal{C}^0$ estimate for $u^R$ in Lemma \ref{lem:AprioriEstimatesHJB14/04}. We then proceed by induction, following the proof of  \cite[Lemma A.2]{ddj2023}. We introduce the heat kernel  $(P_t)_{t \geq 0}$ so that $\phi_t$ is given for all $t\in [t_0,t_1]$ by Duhamel's formula 
$$ \phi_t = P_{t_1 -t} \varphi + \int_{t}^{t_1} P_{s-t} \bigl \{ \alpha_s \cdot \nabla \phi_s - (V - \langle V ; \mu_s \rangle ) \phi_s + V \langle \phi_s ; \mu_s \rangle \bigr \} ds. $$
Differentiating $k$ times on both side and using the classical estimates
$$\norm{ P_{s-t} f}_{\mathcal{C}^0(\T^d)} \leq \norm{f}_{\mathcal{C}^0(\T^d)}, \quad \quad  \norm{\nabla \bigl \{ P_{s-t} f \bigr \}}_{\mathcal{C}^0(\T^d)} \leq \frac{\norm{f}_{\mathcal{C}^{0}(\T^d)}}{\sqrt{s-t}} $$
we find
\begin{align} 
\norm{ \nabla^k \phi_t}_{\mathcal{C}^0(\T^d)} & \leq \norm{\nabla^k \varphi}_{\mathcal{C}^0(\T^d)} + \Lambda(k) \int_t^{t_1} \frac{\norm{\alpha_s}_{\mathcal{C}^{k-1}(\T^d)} \norm{ \phi_s}_{\mathcal{C}^k(\T^d)}}{\sqrt{s-t}} ds \\
&+ \Lambda(k) \int_t^{t_1} \norm{ \phi_s}_{\mathcal{C}^{k-1}(\T^d)} \frac{\norm{V}_{\mathcal{C}^{k-1}(\T^d)}}{\sqrt {s-t}}ds  + \int_t^{t_1} \norm{\phi_s}_{\mathcal{C}^0(\T^d)} \frac{\norm{ \nabla^{k-1}V}_{\mathcal{C}^{0}(\T^d)}}{\sqrt{s-t}}ds
\end{align}
for some $\Lambda(k)>0$ depending only on $k$, and we conclude by induction, using a variant of Grönwall's Lemma, see \cite[Lemma A.1]{ddj2023}.
\end{proof}

We go on with the first regularity properties of the value function $\mathcal{V}$. The result involves the metric $\norm{ \cdot }_{H^{-k}(\T^d)}$ inherited by duality with the Hilbert space $H^k(\T^d)$ of functions on $\T^d$ with derivatives up to order $k$  in $L^2(\T^d)$ (see the beginning Section \ref{sec:supconvolandproperties16/04} for details regarding the space $H^{-k}(\T^d)$).  

\begin{prop}
\label{prop:lipandSCV16/004}
    Assume that $V$ and $g$ belong to $\mathcal{C}^k(\T^d)$ for some $k \geq 2$. Then the value function $\mathcal{V}$ is Lipschitz and semi-concave in the measure argument with respect to the norm of $\mathcal{C}^{-k}(\T^d)$. More precisely, there is a non decreasing function $\Lambda : \R^+ \rightarrow \R^+$ independent of $(T,g,V)$ such that, for all $t_0 \in [0,T]$ and $\mu^1, \mu^2 \in \mathcal{P}(\T^d)$ we have:
\begin{equation} 
|\mathcal{V}(t_0,\mu^2) - \mathcal{V}(t_0,\mu^1)| \leq \Lambda \Bigl(T+ k + \norm{ V}_{\mathcal{C}^{k}(\T^d)} + \norm{g}_{\mathcal{C}^k(\T^d)} \Bigr) \norm{ \mu^2 - \mu^1 }_{\mathcal{C}^{-k}(\T^d)}, 
\label{eq:Lip/C-k16/03}
\end{equation}
and for all $\lambda \in (0,1)$
\begin{align} 
\notag \mathcal{V}(t_0, (1-\lambda) \mu^1 + \lambda \mu^2 ) &\geq (1-\lambda) \mathcal{V}(t_0,\mu^1) + \lambda \mathcal{V}(t_0,\mu^2) \\
&- \Lambda \Bigl(T+ k + \norm{ V}_{\mathcal{C}^{k}(\T^d)} + \norm{g}_{\mathcal{C}^k(\T^d)} \Bigr) \times \frac{\lambda (1-\lambda)}{2} \norm{ \mu^2 - \mu^1}^2_{\mathcal{C}^{-k}(\T^d)}. \label{eq:semiconcavC-k16/04}
\end{align}
Moreover, estimates \eqref{eq:Lip/C-k16/03} and \eqref{eq:semiconcavC-k16/04} still hold true, possibly for a new choice of $\Lambda$ but depending on the same parameters,  with $\norm{\mu^2 - \mu^1}_{\mathcal{C}^{-k}(\T^d)}$ replaced by $\norm{\mu^2 - \mu^1}_{H^{-k}(\T^d)}.$
\end{prop}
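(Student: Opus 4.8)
The plan is to exploit the representation $\mathcal{V}(t_0,\cdot)=\inf_\alpha J(t_0,\cdot,\alpha)$ together with the two ingredients already established: the regularity of optimal controls in Theorem~\ref{thm:OptimalityConditons} (in particular the estimate \eqref{eq:estimateforalpha15/04}, including the integrable singularity $\sqrt{T-t}\,\|\alpha_t\|_{\mathcal{C}^k}\le\Lambda$ near the terminal time) and the stability/near-affineness of the Fokker--Planck flow in Lemma~\ref{lem:stability15/04}. In both parts one fixes an optimal control for one of the problems, uses it as a (suboptimal) competitor for the other(s), and estimates the resulting change of cost by pairing $\tfrac12|\alpha_t|^2$ and $g$ (both smooth, with $\mathcal{C}^k$ norms under control) against measure increments measured in $\mathcal{C}^{-k}$. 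Note that an optimal control $\alpha=-\nabla u$ given by Theorem~\ref{thm:OptimalityConditons} is a genuine bounded continuous function on $[t_0,T]\times\T^d$, hence admissible in the sense of \eqref{def:limitvfV} starting from \emph{any} initial measure, and it satisfies the hypotheses of Lemma~\ref{lem:stability15/04}.

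\textbf{Lipschitz estimate \eqref{eq:Lip/C-k16/03}.} Let $(\mu^1,\alpha^1)$ be optimal for $\mathcal{V}(t_0,\mu^1)$, and let $\widetilde\mu^2$ solve the Fokker--Planck equation driven by the \emph{same} control $\alpha^1$ but started from $\mu^2$. Since $\alpha^1$ is admissible from $\mu^2$,
$$\mathcal{V}(t_0,\mu^2)-\mathcal{V}(t_0,\mu^1)\le J(t_0,\mu^2,\alpha^1)-J(t_0,\mu^1,\alpha^1)=\int_{t_0}^T\!\!\int_{\T^d}\tfrac12|\alpha^1_t|^2\,d(\widetilde\mu^2_t-\mu^1_t)\,dt+\int_{\T^d} g\,d(\widetilde\mu^2_T-\mu^1_T).$$
Bounding each pairing by $\|\cdot\|_{\mathcal{C}^k}\|\cdot\|_{\mathcal{C}^{-k}}$ and applying part~$i)$ of Lemma~\ref{lem:stability15/04} to $\mu^1,\widetilde\mu^2$ (which solve the equation with the \emph{same} control $\alpha^1$, whose norms are controlled via \eqref{eq:estimateforalpha15/04}) gives $\mathcal{V}(t_0,\mu^2)-\mathcal{V}(t_0,\mu^1)\le\bigl(\int_{t_0}^T\|\tfrac12|\alpha^1_t|^2\|_{\mathcal{C}^k}\,dt+\|g\|_{\mathcal{C}^k}\bigr)\,\Lambda\,\|\mu^1-\mu^2\|_{\mathcal{C}^{-k}}$. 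Exchanging the roles of $\mu^1$ and $\mu^2$ (using an optimal control for $\mathcal{V}(t_0,\mu^2)$) yields \eqref{eq:Lip/C-k16/03}, once the time integral is shown finite (last paragraph).

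\textbf{Semi-concavity \eqref{eq:semiconcavC-k16/04}.} Set $\mu^\lambda_{t_0}:=(1-\lambda)\mu^1+\lambda\mu^2$, let $\alpha$ be optimal for $\mathcal{V}(t_0,\mu^\lambda_{t_0})$, and let $\mu^{1,\alpha},\mu^{2,\alpha},\mu^\lambda$ be the solutions of the Fokker--Planck equation driven by $\alpha$ started from $\mu^1,\mu^2,\mu^\lambda_{t_0}$ respectively. Since $\alpha$ is admissible from $\mu^1$ and $\mu^2$, one has $J(t_0,\mu^i,\alpha)\ge\mathcal{V}(t_0,\mu^i)$, while $\mathcal{V}(t_0,\mu^\lambda_{t_0})=J(t_0,\mu^\lambda_{t_0},\alpha)$. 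Writing $J(t_0,\mu^\lambda_{t_0},\alpha)-(1-\lambda)J(t_0,\mu^1,\alpha)-\lambda J(t_0,\mu^2,\alpha)$ as $\int_{t_0}^T\!\int\tfrac12|\alpha_t|^2\,d\bigl(\mu^\lambda_t-(1-\lambda)\mu^{1,\alpha}_t-\lambda\mu^{2,\alpha}_t\bigr)\,dt+\int g\,d\bigl(\mu^\lambda_T-(1-\lambda)\mu^{1,\alpha}_T-\lambda\mu^{2,\alpha}_T\bigr)$ and invoking part~$ii)$ of Lemma~\ref{lem:stability15/04} — the quantitative defect of affineness $\sup_t\|(1-\lambda)\mu^{1,\alpha}_t+\lambda\mu^{2,\alpha}_t-\mu^\lambda_t\|_{\mathcal{C}^{-k}}\le\Lambda\tfrac{\lambda(1-\lambda)}{2}\|\mu^1-\mu^2\|^2_{\mathcal{C}^{-k}}$ — one gets $\mathcal{V}(t_0,\mu^\lambda_{t_0})\ge(1-\lambda)\mathcal{V}(t_0,\mu^1)+\lambda\mathcal{V}(t_0,\mu^2)-\bigl(\int_{t_0}^T\|\tfrac12|\alpha_t|^2\|_{\mathcal{C}^k}\,dt+\|g\|_{\mathcal{C}^k}\bigr)\Lambda\tfrac{\lambda(1-\lambda)}{2}\|\mu^1-\mu^2\|^2_{\mathcal{C}^{-k}}$, which is \eqref{eq:semiconcavC-k16/04}. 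It is essential here to route the argument through the optimal control of the \emph{combined} problem: one cannot simply superpose the two optimal flows, because the nonlocal term $\langle V;\mu_t\rangle$ makes the Fokker--Planck equation nonlinear in $\mu$.

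\textbf{Main obstacle and the $H^{-k}$ version.} The delicate point is the finiteness, with the correct dependence, of $\int_{t_0}^T\|\,|\alpha_t|^2\,\|_{\mathcal{C}^k}\,dt$: the naive Banach-algebra bound $\|\,|\alpha_t|^2\,\|_{\mathcal{C}^k}\lesssim\|\alpha_t\|_{\mathcal{C}^k}^2$ produces an $O((T-t)^{-1})$ integrand, which is \emph{not} integrable near $T$. Instead one uses a Moser/Gagliardo--Nirenberg-type interpolation inequality on the torus, $\|\,|\alpha_t|^2\,\|_{\mathcal{C}^k}\le C_k\,\|\alpha_t\|_{\mathcal{C}^0}\,\|\alpha_t\|_{\mathcal{C}^k}$, so that \eqref{eq:estimateforalpha15/04} gives $\|\,|\alpha_t|^2\,\|_{\mathcal{C}^k}\le\Lambda^2(T-t)^{-1/2}$ and hence $\int_{t_0}^T\|\tfrac12|\alpha_t|^2\|_{\mathcal{C}^k}\,dt\le\Lambda^2\sqrt{T-t_0}$, with the required dependence on $T,\|V\|_{\mathcal{C}^k},\|g\|_{\mathcal{C}^k}$. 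Finally, the $H^{-k}$ versions of \eqref{eq:Lip/C-k16/03}--\eqref{eq:semiconcavC-k16/04} follow at once: on the compact torus $\|\varphi\|_{H^k}\le C\|\varphi\|_{\mathcal{C}^k}$, so the continuous inclusion $\mathcal{C}^k(\T^d)\hookrightarrow H^k(\T^d)$ dualizes to $\|\cdot\|_{\mathcal{C}^{-k}}\le C\|\cdot\|_{H^{-k}}$, and the already proved $\mathcal{C}^{-k}$ bounds imply the $H^{-k}$ ones (with $\Lambda$ replaced by $C\Lambda$, resp.\ $C^2\Lambda$).
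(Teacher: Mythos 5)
Your proposal is correct and follows essentially the same route as the paper: use the bounded smooth optimal control from Theorem \ref{thm:OptimalityConditons} as a competitor for the other initial data, invoke parts $i)$ and $ii)$ of Lemma \ref{lem:stability15/04} for the flows driven by that single control, control $\norm{|\alpha_t|^2}_{\mathcal{C}^k}$ via a product/interpolation bound so that the $(T-t)^{-1/2}$ singularity from \eqref{eq:estimateforalpha15/04} remains integrable, and pass to the $H^{-k}$ statement through the embedding $\norm{\cdot}_{\mathcal{C}^{-k}}\leq C_{d,k}\norm{\cdot}_{H^{-k}}$. The paper handles the square of the control with the slightly different but equivalent estimate $\norm{|\alpha_t|^2}_{\mathcal{C}^k}\leq C_{k-1}(1+\norm{\alpha_t}_{\mathcal{C}^k})$, with $C_{k-1}$ depending on the uniformly bounded $\mathcal{C}^{k-1}$ norm, but this is the same idea as your interpolation step.
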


\begin{proof}
Throughout the proof, the non-decreasing function $\Lambda$ is allowed to change from line to line. We also omit its argument to lighten the notation. We start with the Lipschitz regularity. We fix some initial time $t_0 \in [0,T]$ and we take two measures $\mu_{t_0}^1,\mu_{t_0}^2$ as well as an optimal control $\alpha$ for $\mathcal{V}(t_0, \mu^1_{t_0})$. Thanks to Theorem \ref{thm:OptimalityConditons} we know that 
\begin{equation}
\sup_{t \in [t_0,T]} \norm{ \alpha_t}_{\mathcal{C}^{k-1}(\T^d)} 
+ \sup_{t \in [t_0,T]} \sqrt{T-t} \norm{ \alpha_t}_{\mathcal{C}^k(\T^d)} \leq \Lambda.
\label{eq:27/04:21:04}
\end{equation}
For all $t \in [t_0,T]$, expanding the $\mathcal{C}^k$ norm of the square $|\alpha_t|^2$ we find that, for some $C_{k-1}>0$ depending only on the $\mathcal{C}^{k-1}$ norm of $\alpha_t$,
$$ \norm{ |\alpha_t|^2} \leq C_{k-1} (1+ \norm{ \alpha_t}_{\mathcal{C}^k(\T^d)}).$$
Therefore we deduce from \eqref{eq:27/04:21:04} that
\begin{equation}
\sup_{t \in [t_0,T]} \norm{ |\alpha_t|^2}_{\mathcal{C}^{k-1}(\T^d)} 
+ \sup_{t \in [t_0,T]} \sqrt{T-t} \norm{ |\alpha_t|^2}_{\mathcal{C}^k(\T^d)} \leq \Lambda.
\label{eq:27/04:21:07}
\end{equation}
We let $(\mu^{i}_t)_{t \in [t_0,T]}$, $i=1,2$ be the measure flows obtained by using the same control $\alpha$ for both initial conditions $(t_0,\mu_{t_0}^1)$ and $(t_0, \mu_{t_0}^1)$. Thanks to Lemma \ref{lem:stability15/04} we know that 
$$ 
\sup_{t \in [t_0,T]} \norm{ \mu_t^1 -\mu_t^2}_{\mathcal{C}^{-k}(\T^d)} \leq \Lambda \norm{ \mu_{t_0}^1 - \mu_{t_0}^1}_{\mathcal{C}^{-k}(\T^d)}. 
$$
Since $\alpha$ is optimal for $(t_0,\mu_{t_0}^1)$ and admissible for $(t_0,\mu_{t_0}^2)$ we have 
\begin{align*}
    \mathcal{V}(t_0, \mu_{t_0}^2) - \mathcal{V}(t_0, \mu_{t_0}^1) & \leq \int_{t_0}^T \int_{\T^d} \frac{1}{2} |\alpha_t(x)|^2 d(\mu_t^2 - \mu_t^1)(x)dt + \int_{\T^d} g(x) d(\mu_T^2 - \mu_T^1)(x) \\
    &\leq \Lambda \sup_{t \in [t_0,T]}  \norm{ \mu^2_t - \mu^1_t}_{\mathcal{C}^{-k}(\T^d)} ,
\end{align*}
where we used the regularity of $\alpha$ through the bound \eqref{eq:27/04:21:07} as well as the $\mathcal{C}^k$ regularity of $g$. Applying part $i)$ of Lemma \ref{lem:stability15/04} we deduce that
$$ \mathcal{V}(t_0, \mu_{t_0}^2) - \mathcal{V}(t_0, \mu_{t_0}^1) \leq \Lambda \norm{ \mu_{t_0}^1 - \mu_{t_0}^2}_{\mathcal{C}^{-k}(\T^d)}.$$
Exchanging the roles of $\mu_{t_0}^1$ and $\mu_{t_0}^2$ we get the Lipschitz regularity of $\mathcal{V}$. 

We go on with the semi-concavity estimates. To this end we fix and initial time $t_0 \in [0,T)$, two initial measures $\mu_{t_0}^1$ and $\mu_{t_0}^2$. For all $\lambda \in [0,1]$ we let $\mu_{t_0}^{\lambda} := (1-\lambda) \mu_{t_0}^1 + \lambda \mu_{t_0}^2$. For $\lambda \in (0,1)$ fixed we take $\alpha^{\lambda}$ an optimal control for $\mathcal{V}(t_0, \mu_{t_0}^{\lambda})$ and we consider the curves $(\mu^1_t)_{t \in [t_0,T]}, (\mu^2_t)_{t \in [t_0,T]}$ and $(\mu^{\lambda}_t)_{t\in [t_0,T]}$ obtained by playing $\alpha^{\lambda}$ for each initial condition. By optimality of $\alpha^{\lambda}$ for $\mathcal{V}(t_0, \mu^{\lambda}_{t_0})$ we have
\begin{align*}
    (1-\lambda) &\mathcal{V}(t_0, \mu_{t_0}^1) + \lambda \mathcal{V}(t_0, \mu_{t_0}^2) - \mathcal{V}(t_0, \mu_{t_0}^{\lambda}) \\
    &\leq \int_{t_0}^T \int_{\T^d} \frac{1}{2} |\alpha^{\lambda}_t(x)|^2 d( (1-\lambda) \mu_t^1 + \lambda \mu_t^2 - \mu_t^{\lambda})(x) + \int_{\T^d} g(x)d ( (1-\lambda) \mu_T^1 + \lambda \mu_T^2 - \mu_T^{\lambda})(x) \\
    & \leq  \sup_{t \in [t_0,T]} \norm{(1-\lambda)\mu_t^1 + \lambda \mu_t^2 - \mu_t^{\lambda}}_{\mathcal{C}^{-k}(\T^d)} \Bigl( \int_{t_0}^T \frac{1}{2} \norm{ |\alpha_t^{\lambda} |^2}_{C^k(\T^d)} dt + \norm{ g}_{\mathcal{C}^k(\T^d)} \Bigr).
\end{align*}

We conclude using part $ii)$ of Lemma \ref{lem:stability15/04} and estimate \eqref{eq:27/04:21:07} (which holds equally well for $\alpha^{\lambda}$)  that,  
$$  (1-\lambda) \mathcal{V}(t_0, \mu_{t_0}^1) + \lambda \mathcal{V}(t_0, \mu_{t_0}^2) - \mathcal{V}(t_0, \mu_{t_0}^{\lambda}) \leq \Lambda \frac{\lambda (1-\lambda)}{2}  \norm{ \mu_{t_0}^1 - \mu_{t_0}^2}^2_{\mathcal{C}^{-k}(\T^d)}.$$
Finally, since we are working over the compact space $\T^d$, there is a constant $C_{d,k}$ such that $\norm{ \varphi}_{H^k(\T^d)} \leq C_{d,k} \norm{\varphi}_{\mathcal{C}^k(\T^d)} $ for all $ \varphi \in \mathcal{C}^k(\T^d)$ and therefore $\norm{ }_{\mathcal{C}^{-k}} \leq C_{d,k} \norm{ \cdot }_{H^{-k}(\T^d)}$ for the same constant $C_{d,k}$ and we can replace $\norm{ \mu^2 - \mu^1 }_{\mathcal{C}^{-k}(\T^d)}$ by $\norm{ \mu^2 - \mu^1}_{H^{-k}(\T^d)}$ in estimates  \eqref{eq:Lip/C-k16/03} and \eqref{eq:semiconcavC-k16/04}.
This concludes the proof of the Lemma.
\end{proof}

Before going on with the time regularity we recall that, by classical arguments,  $\mathcal{V}$ satisfies a dynamic programming principle.

\begin{prop}[Dynamic Programming Principle]
    For any $(t_0, \mu_0) \in [0,T] \times \mathcal{P}(\T^d)$ and any $t_1 \in (t_0,T]$ it holds
    $$\mathcal{V}(t_0,\mu_0) = \inf_{(\alpha_t, \mu_t)} \int_{t_0}^{t_1} \int_{\T^d} \frac{1}{2}|\alpha_t(x)|^2 d\mu_t(x)dt + \mathcal{V}(t_1, \mu_{t_1}), $$
where the infimum is taken over the couples $(\mu,\alpha)$ solutions to
$$ \partial_t \mu_t + \div( \alpha_t \mu_t ) - \frac{1}{2}\Delta \mu_t + (V - \lg V;\mu_t \rg)\mu_t = 0, \mbox{  in } (t_0,t_1) \times \T^d, \quad \mu_{t_0} = \mu_0 \quad \mbox{ in } \T^d,$$
with $\mu \in \mathcal{C}([t_0,t_1],\mathcal{P}(\T^d))$ and $\alpha \in L_{\mu_t \otimes dt}^{\infty}([t_0,t_1] \times \T^d; \R^d).$
\end{prop}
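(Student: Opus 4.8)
The statement to prove is the Dynamic Programming Principle for $\mathcal{V}$. Here is my plan.

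\medskip

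\textbf{Approach.} The proof is the classical two-inequality argument for dynamic programming, adapted to the deterministic control problem \eqref{def:limitvfV} on the Wasserstein space with the nonlinear, nonlocal Fokker-Planck constraint \eqref{eq:FPEIntro-28/04}. The only nonstandard feature compared to a textbook proof is that the flow constraint is nonlinear (through $\langle V;\mu_t\rangle$), so concatenation of admissible pairs has to be checked carefully; but since the control $\alpha$ enters linearly in the drift and the nonlinearity is ``autonomous'' in the sense that it only depends on $\mu_t$ itself, concatenation of solutions does produce a solution. Denote the right-hand side by $\mathcal{W}(t_0,\mu_0)$.

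\medskip

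\textbf{Step 1: $\mathcal{V}(t_0,\mu_0) \le \mathcal{W}(t_0,\mu_0)$ (subadditivity / concatenation).} Fix $(\alpha,\mu)$ admissible on $[t_0,t_1]$ starting from $\mu_0$, and fix $\varepsilon>0$. Let $(\beta,\nu)$ be $\varepsilon$-optimal for $\mathcal{V}(t_1,\mu_{t_1})$ on $[t_1,T]$ starting from $\mu_{t_1}$. Define the concatenated control $\gamma_t := \alpha_t \mathbf{1}_{[t_0,t_1)}(t) + \beta_t \mathbf{1}_{[t_1,T]}(t)$ and the concatenated flow $\rho_t := \mu_t \mathbf{1}_{[t_0,t_1]}(t) + \nu_t \mathbf{1}_{(t_1,T]}(t)$. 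Since $\rho_{t_1}=\mu_{t_1}=\nu_{t_1}$ and both $\mu$ and $\nu$ solve \eqref{eq:FPEIntro-28/04} (with the respective controls) on their intervals, the pair $(\gamma,\rho)$ solves \eqref{eq:FPEIntro-28/04} on $[t_0,T]$ in the sense of distributions — this is the one point to verify in detail: test against $\varphi\in\mathcal{C}^\infty([t_0,T]\times\T^d)$, split the time integral at $t_1$, and use the distributional formulations on each subinterval; continuity of $t\mapsto\rho_t$ at $t_1$ makes the boundary terms match. Moreover $\gamma\in L^\infty(\rho_t\otimes dt)$ since both pieces are. Then $\mathcal{V}(t_0,\mu_0)\le \int_{t_0}^T\!\int_{\T^d}\frac12|\gamma_t|^2 d\rho_t dt + \int g\,d\rho_T = \int_{t_0}^{t_1}\!\int\frac12|\alpha_t|^2d\mu_t dt + \big(\int_{t_1}^T\!\int\frac12|\beta_t|^2 d\nu_t dt + \int g\,d\nu_T\big) \le \int_{t_0}^{t_1}\!\int\frac12|\alpha_t|^2 d\mu_t dt + \mathcal{V}(t_1,\mu_{t_1}) + \varepsilon$. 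Taking the infimum over $(\alpha,\mu)$ and letting $\varepsilon\to0$ gives the inequality.

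\medskip

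\textbf{Step 2: $\mathcal{V}(t_0,\mu_0) \ge \mathcal{W}(t_0,\mu_0)$ (restriction).} Fix $\varepsilon>0$ and let $(\alpha,\mu)$ be $\varepsilon$-optimal for $\mathcal{V}(t_0,\mu_0)$ on $[t_0,T]$. The restriction $(\alpha|_{[t_0,t_1]},\mu|_{[t_0,t_1]})$ is admissible for the problem defining $\mathcal{W}(t_0,\mu_0)$, and the restriction $(\alpha|_{[t_1,T]},\mu|_{[t_1,T]})$ is admissible for $\mathcal{V}(t_1,\mu_{t_1})$ (restricting a distributional solution of \eqref{eq:FPEIntro-28/04} to a subinterval is immediate, using $\mu_{t_1}$ as the new initial datum). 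Hence $\mathcal{V}(t_0,\mu_0)+\varepsilon \ge \int_{t_0}^{t_1}\!\int\frac12|\alpha_t|^2 d\mu_t dt + \int_{t_1}^T\!\int\frac12|\alpha_t|^2 d\mu_t dt + \int g\,d\mu_T \ge \int_{t_0}^{t_1}\!\int\frac12|\alpha_t|^2 d\mu_t dt + \mathcal{V}(t_1,\mu_{t_1}) \ge \mathcal{W}(t_0,\mu_0)$. Let $\varepsilon\to0$.

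\medskip

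\textbf{Main obstacle.} The only genuinely technical point is the concatenation in Step 1 — verifying that gluing two distributional solutions of the nonlinear Fokker-Planck equation \eqref{eq:FPEIntro-28/04} at the matching time $t_1$ again yields a distributional solution, which rests on the continuity $\mu\in\mathcal{C}([t_0,T],\mathcal{P}(\T^d))$ and on the fact that the nonlocal coefficient $\langle V;\mu_t\rangle$ depends only on the current state $\mu_t$ (so no memory issues arise). Existence of $\varepsilon$-optimal controls is automatic from the definition of the infimum; one need not invoke the existence of genuine minimizers from Theorem \ref{thm:OptimalityConditons}, though it could be used to replace ``$\varepsilon$-optimal'' by ``optimal'' throughout and streamline the argument slightly. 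Everything else is the standard bookkeeping of splitting and recombining time integrals, which I would state as ``by classical arguments'' rather than write out in full.
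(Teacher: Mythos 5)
Your two-inequality argument (concatenation of an admissible pair on $[t_0,t_1]$ with an $\varepsilon$-optimal pair on $[t_1,T]$, and restriction of an $\varepsilon$-optimal pair for the full horizon) is correct, and you rightly identify the only delicate point: gluing distributional solutions of the nonlinear Fokker--Planck equation at $t_1$, which works because $\langle V;\mu_t\rangle$ depends only on the current state and $t\mapsto\mu_t$ is continuous. This is exactly the ``classical arguments'' the paper invokes without writing a proof, so your proposal matches the paper's (implicit) approach.
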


\begin{prop}
\label{prop:timeregV}
    Assume that $V$ and $g$ belong to $\mathcal{C}^k(\T^d)$ for some $k \geq 2$. Then there is a non decreasing function $\Lambda : \R^+ \rightarrow \R^+$ independent of $(T,g,V)$ such that, for all $t_1,t_2 \in [0,T]$, all $\mu^0 \in \mathcal{P}(\T^d)$
    $$ |\mathcal{V}(t_2,\mu^0) - \mathcal{V}(t_1,\mu^0)| \leq \Lambda \Bigl(T + \norm{ V}_{\mathcal{C}^{2}(\T^d)} + \norm{g}_{\mathcal{C}^2(\T^d)} \Bigr) |t_2 -t_1|.  $$
\end{prop}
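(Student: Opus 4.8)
The plan is to prove time regularity by combining the dynamic programming principle with a short-time a priori estimate on the cost of an admissible control. Fix $\mu^0 \in \mathcal{P}(\T^d)$ and $t_1 < t_2$ in $[0,T]$. For one direction, I would use the DPP at the intermediate time: $\mathcal{V}(t_1,\mu^0) = \inf_{(\alpha,\mu)}\bigl\{\int_{t_1}^{t_2}\int_{\T^d}\tfrac12|\alpha_t|^2 d\mu_t\,dt + \mathcal{V}(t_2,\mu_{t_2})\bigr\}$. Choosing the trivial control $\alpha \equiv 0$ on $[t_1,t_2]$, the running cost vanishes and $\mu$ solves the uncontrolled Fokker--Planck equation $\partial_t\mu_t - \tfrac12\Delta\mu_t + (V-\langle V;\mu_t\rangle)\mu_t = 0$ from $\mu_{t_1} = \mu^0$; this gives $\mathcal{V}(t_1,\mu^0) \leq \mathcal{V}(t_2,\mu_{t_2})$. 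Then by the Lipschitz estimate \eqref{eq:Lip/C-k16/03} from Proposition \ref{prop:lipandSCV16/004}, $|\mathcal{V}(t_2,\mu_{t_2}) - \mathcal{V}(t_2,\mu^0)| \leq \Lambda\,\norm{\mu_{t_2}-\mu^0}_{\mathcal{C}^{-k}(\T^d)}$, so it remains to bound $\norm{\mu_{t_2}-\mu^0}_{\mathcal{C}^{-k}(\T^d)}$ by $C|t_2-t_1|$, for which $k=2$ suffices. That bound follows by testing the Fokker--Planck equation against $\varphi \in \mathcal{C}^2(\T^d)$ with $\norm{\varphi}_{\mathcal{C}^2}\le 1$: $\int_{\T^d}\varphi\,d(\mu_{t_2}-\mu^0) = \int_{t_1}^{t_2}\bigl(\tfrac12\langle\Delta\varphi;\mu_t\rangle - \langle(V-\langle V;\mu_t\rangle)\varphi;\mu_t\rangle\bigr)dt$, and each term is bounded by a constant depending on $\norm{V}_{\mathcal{C}^0}$ and $\norm{\varphi}_{\mathcal{C}^2}$, hence $\norm{\mu_{t_2}-\mu^0}_{\mathcal{C}^{-2}(\T^d)}\le C(1+\norm{V}_{\mathcal{C}^0})|t_2-t_1|$.

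For the reverse inequality I would run the DPP the other way. Take $\alpha^\star$ an optimal (or near-optimal) control for $\mathcal{V}(t_2,\mu^0)$ on $[t_2,T]$, with associated flow $(\mu^\star_t)_{t\in[t_2,T]}$; by Theorem \ref{thm:OptimalityConditons} we may take $\alpha^\star$ smooth with $\sup_{t}\norm{\alpha^\star_t}_{\mathcal{C}^{k-1}}$ and $\sup_t \sqrt{T-t}\norm{\alpha^\star_t}_{\mathcal{C}^k}$ bounded by $\Lambda$. Build an admissible control on $[t_1,T]$ for the initial datum $(t_1,\mu^0)$ by using $\alpha\equiv 0$ on $[t_1,t_2]$ followed by (a time-shifted version of) $\alpha^\star$ on $[t_2,T]$ — but this needs care because after the null control on $[t_1,t_2]$ the measure is $\mu_{t_2}\ne\mu^0$, so $\alpha^\star$ is no longer exactly optimal for $\mu_{t_2}$. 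The clean way is: use $\alpha\equiv 0$ on $[t_1,t_2]$, reaching $\mu_{t_2}$, so $\mathcal{V}(t_1,\mu^0)\le \mathcal{V}(t_2,\mu_{t_2})$; and conversely, for the bound $\mathcal{V}(t_2,\mu^0)\le \mathcal{V}(t_1,\mu^0)+C|t_2-t_1|$ I would instead apply the DPP at $t_1$ but with the time-reversal/translation symmetry unavailable, so the cleaner route is to observe that both $\mathcal{V}(t_2,\mu^0)$ and $\mathcal{V}(t_1,\mu^0)$ can be compared through $\mathcal{V}(t_2,\mu_{t_2})$ where $(\mu_t)_{t\in[t_1,t_2]}$ is the null-control flow: namely $\mathcal{V}(t_1,\mu^0)\le\mathcal{V}(t_2,\mu_{t_2})$ directly, and for the reverse, take an optimal $\alpha$ on $[t_1,T]$ for $(t_1,\mu^0)$, restrict it to $[t_2,T]$ to get an admissible control for $(t_2,\mu_{t_2}')$ where $\mu'$ is the $(t_1,\mu^0)$-optimal flow at time $t_2$; then $\mathcal{V}(t_2,\mu'_{t_2})\le\mathcal{V}(t_1,\mu^0) - \int_{t_1}^{t_2}\int\tfrac12|\alpha_t|^2d\mu_t\,dt\le\mathcal{V}(t_1,\mu^0)$, and again $\norm{\mu'_{t_2}-\mu^0}_{\mathcal{C}^{-k}}$ must be controlled. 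The subtlety is that an \emph{optimal} flow for $(t_1,\mu^0)$ need not be smooth on the whole interval, but by Theorem \ref{thm:OptimalityConditons} it is driven by a control with $\sup_t\norm{\alpha_t}_{\mathcal{C}^{k-1}}\le\Lambda$, which gives $\norm{\mu'_{t_2}-\mu^0}_{\mathcal{C}^{-k}}\le C(1+\Lambda+\norm{V}_{\mathcal{C}^{k-1}})|t_2-t_1|$ by the same test-function argument plus the drift term $\div(\alpha_t\mu_t)$ contributing $\langle\alpha_t\cdot\nabla\varphi;\mu_t\rangle$, bounded using $\norm{\alpha_t}_{\mathcal{C}^0}\le\Lambda$ and $\norm{\varphi}_{\mathcal{C}^1}\le 1$.

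Putting the two inequalities together, with $\Lambda$ depending only on $T$, $\norm{V}_{\mathcal{C}^2(\T^d)}$ (which dominates $\norm{V}_{\mathcal{C}^0}$ and the constant in Proposition \ref{prop:lipandSCV16/004} for $k=2$) and $\norm{g}_{\mathcal{C}^2(\T^d)}$, yields $|\mathcal{V}(t_2,\mu^0)-\mathcal{V}(t_1,\mu^0)|\le\Lambda|t_2-t_1|$. Note the constant in the Lipschitz bound \eqref{eq:Lip/C-k16/03} with $k=2$ depends on $\norm{V}_{\mathcal{C}^2}$ and $\norm{g}_{\mathcal{C}^2}$, consistent with the statement; and although Theorem \ref{thm:OptimalityConditons} is stated for $k\ge 2$, the optimal control regularity $\sup_t\norm{\alpha_t}_{\mathcal{C}^{k-1}}\le\Lambda$ with $k=2$ gives $\mathcal{C}^1$ control of $\alpha$, exactly what the drift-term estimate requires.

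The main obstacle I anticipate is the reverse inequality: one cannot simply ``translate in time'' an optimal control because the horizon is fixed at $T$, so one must compare through an intermediate measure and carefully invoke the smoothness of optimal controls from Theorem \ref{thm:OptimalityConditons} to control the displacement $\norm{\mu'_{t_2}-\mu^0}_{\mathcal{C}^{-k}(\T^d)}$; all other steps are routine applications of the DPP, the Lipschitz estimate of Proposition \ref{prop:lipandSCV16/004}, and elementary Fokker--Planck test-function bounds.
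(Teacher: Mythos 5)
Your proposal is correct and follows essentially the same route as the paper: dynamic programming, the Lipschitz estimate in the measure variable from Proposition \ref{prop:lipandSCV16/004}, the boundedness of optimal controls from Theorem \ref{thm:OptimalityConditons}, and the duality estimate $\norm{\mu_{t_2}-\mu^0}_{\mathcal{C}^{-2}(\T^d)} \leq C\,|t_2-t_1|$ obtained by testing the Fokker--Planck equation against $\mathcal{C}^2$ functions. The only difference is cosmetic: the paper handles both inequalities at once via the identity $\mathcal{V}(t_1,\mu^0) = \tfrac12\int_{t_1}^{t_2}\int_{\T^d}|\alpha_t|^2\,d\mu_t\,dt + \mathcal{V}(t_2,\mu_{t_2})$ along the optimal flow for $(t_1,\mu^0)$, with the running cost of order $|t_2-t_1|$ by the bound on $\alpha$, whereas you split the two directions and use the null control for one of them.
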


\begin{proof}
    We fix some initial measure $\mu^0 \in \mathcal{P}(\T^d)$ and two times $t_1,t_2 \in [0,T]$ with $t_1 <t_2$. We let $(\alpha_t)_{t \geq t_1}$ be an optimal control for $\mathcal{V}(t_1,\mu_0)$ and we denote by $(\mu_t)_{t\geq t_1}$ the associated measure flow. By dynamic programming we have,
\begin{align*}
\mathcal{V}(t_1, \mu_0) - \mathcal{V}(t_2,\mu_0) &= \mathcal{V}(t_1, \mu_0) - \mathcal{V}(t_1,\mu_{t_2}) + \mathcal{V}(t_2, \mu_{t_2}) - \mathcal{V}(t_2, \mu_0) \\
&= \frac{1}{2}\int_{t_0}^{t_1} \int_{\T^d} |\alpha_t(x)|^2d\mu_t(x)dt + \mathcal{V}(t_2, \mu_{t_2}) - \mathcal{V}(t_2, \mu_0).
\end{align*}
Using the Lipschitz regularity in the measure variable of $\mathcal{V}$ as well as the boundness of $\alpha$ we infer that
\begin{equation} 
|\mathcal{V}(t_1, \mu_0) - \mathcal{V}(t_2,\mu_0) |\leq \Lambda |t_2 -t_1| + \Lambda \norm{ \mu_{t_2} - \mu_0}_{\mathcal{C}^{-2}(\T^d)}.
\label{eq:towardtimLip15/04}
\end{equation}
To estimate the last term we proceed as follows. We take $\phi \in \mathcal{C}^2(\T^d)$ and we compute, by duality,
\begin{align*}
    \int_{\R^d} \phi(x) d(\mu_{t_2} - \mu_0)(x) &= \int_{t_1}^{t_2} \int_{\T^d} \bigl[ \alpha_t(x) \cdot \nabla \phi(x) + \frac{1}{2}\Delta \phi(x) - (V- \lg V;\mu_t \rg ) \phi(x) \bigr] d\mu_t(x)dt \\
    &\leq \Lambda |t_2 -t_1| \norm{ \phi }_{\mathcal{C}^2(\T^d)} 
\end{align*}
and we take the supremum over $\phi \in \mathcal{C}^2(\T^d)$ with $\norm{ \phi}_{\mathcal{C}^2(\T^d)} \leq 1$ to deduce  that
$$ \norm{ \mu_{t_2} - \mu_0 }_{\mathcal{C}^{-2}(\T^d)} \leq \Lambda |t_2 -t_1|.$$
Combined with \eqref{eq:towardtimLip15/04} this concludes the proof of the time Lipchitz regularity of $\mathcal{V}$.
\end{proof}

From Dynamic Programming, we can infer that $\mathcal{V}$ is a viscosity solution to the following Hamilton-Jacobi equation, set in $[0,T] \times \mathcal{P}(\T^d)$
\begin{equation}
    \left \{
    \begin{array}{ll}
\displaystyle    -\partial_t \mathcal{V} - \frac{1}{2}\int_{\T^d} \Delta_x \frac{\delta \mathcal{V}}{\delta \mu}(t,\mu,x) d\mu(x) + \displaystyle \frac{1}{2} \int_{\T^d} \bigl| \nabla_x \frac{\delta \mathcal{V}}{\delta \mu}(t,\mu,x) |^2 d\mu(x) \\
 \displaystyle  \hspace{100pt} + \int_{\T^d}  (V(x) - \lg V;\mu \rg) \frac{\delta \mathcal{V}}{\delta \mu}(t,\mu,x) d\mu(x) =0, \quad \mbox{ in } [0,T] \times \mathcal{P}(\T^d), \\
\mathcal{V}(T, \mu) = \int_{\T^d} g(x) d\mu(x), \mbox{ in } \mathcal{P}(\T^d). 
    \end{array}
    \right.
\label{eq:MasterHJB16/04}
\end{equation}

In the equation above we use the linear derivative $\frac{\delta }{\delta \mu}$ with respect to the measure. Following \cite{CarmonaDelarue_book_I} we say that $U = \mathcal{P}(\T^d) \rightarrow \R$ is $\mathcal{C}^1$ over $\mathcal{P}(\T^d)$, in notation $U \in \mathcal{C}^1(\mathcal{P}(\T^d))$, if there is a jointly continuous map $\frac{\delta U}{\delta m} : \mathcal{P}(\T^d) \times \T^d \rightarrow \R$ such that, for all $m^1,m^2 \in \mathcal{P}(\T^d)$,
$$ U(m^2) - U(m^1) = \int_0^1 \int_{\T^d} \frac{\delta U}{\delta m} \bigl( (1-r) m^1 + rm^2,x \bigr) d (m^2 -m^1)(x)dr.$$
As such, the linear derivative would only be defined up to an additive constant, so we adopt the convention
\begin{equation} 
\int_{\T^d} \frac{\delta U}{\delta \mu}(\mu,x) dx = 0 \quad \forall \mu \in \mathcal{P}(\T^d).
\label{eq:normalizingconvention16/04}
\end{equation}

In the next result, we say that $\Phi : [0,T] \times \mathcal{P}(\T^d) \rightarrow \R$ is a smooth test function if  $\Phi$ belongs to $\mathcal{C}^1([0,T] \times \mathcal{P}(\T^d))$ and $x \mapsto \frac{\delta U}{\delta \mu}(t,\mu,x)$ is twice differentiable with jointly continuous derivatives.

\begin{lem}
\label{lem:subsolpropV16/04}
    The value function $\mathcal{V}$ is a viscosity sub-solution to the HJB equation \eqref{eq:MasterHJB16/04}. More precisely, for every smooth test function $\Phi :[0,T] \times \mathcal{P}(\T^d) \rightarrow \R$ such that $\mathcal{V} - \Phi$ has a maximum at $(t_0,\mu_0) \in [0,T] \times \mathcal{P}(\T^d)$ we have 
    \begin{align*} 
- \partial_t \Phi(t_0,\mu_0) -\frac{1}{2} \int_{\T^d} \Delta_x \frac{\delta \Phi}{\delta \mu}(t_0,\mu_0,x) &d\mu_0(x) + \frac{1}{2} \int_{\T^d} \bigl|\nabla_x \frac{\delta \Phi}{\delta \mu}(t,\mu_0,x) \bigr|^2 d\mu_0(x) \\
&+ \int_{\T^d} \bigl( V(x) - \lg V,\mu_0 \rg \bigr) \frac{\delta \Phi}{\delta \mu}(t_0, \mu_0,x) d\mu_0(x) \leq 0. 
\end{align*}
\end{lem}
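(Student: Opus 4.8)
The plan is the classical derivation of a Hamilton--Jacobi subsolution inequality from a dynamic programming principle, using the pointwise minimizer of the Hamiltonian as a competitor control. Fix the smooth test function $\Phi$ and the point $(t_0,\mu_0)$ at which $\mathcal{V}-\Phi$ attains its maximum; adding a constant to $\Phi$ (which does not change its derivatives) we may assume that this maximum equals $0$, i.e. $\mathcal{V}\le\Phi$ on $[0,T]\times\mathcal{P}(\T^d)$ with equality at $(t_0,\mu_0)$. The case $t_0=T$ reduces to the terminal condition $\mathcal{V}(T,\cdot)=\int_{\T^d}g\,d(\cdot)$ (or follows by approximation from $t_0<T$ using continuity of $\mathcal{V}$ and $\Phi$), so we focus on $t_0<T$. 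Let $\beta\in\mathcal{C}(\T^d;\R^d)$ be a continuous vector field, to be specialized below; since $\T^d$ is compact, $\beta$ is bounded, so for every $h>0$ with $t_0+h\le T$ the pair consisting of the time-independent control $\alpha_t\equiv\beta$ and the corresponding solution $(\mu_t)_{t\in[t_0,t_0+h]}$ of the Fokker--Planck equation \eqref{eq:FPEIntro-28/04} started at $\mu_0$ is admissible (well-posedness and $\mu\in\mathcal{C}([t_0,t_0+h],\mathcal{P}(\T^d))$ as in \cite{CLL23}). The dynamic programming principle, together with $\mathcal{V}(t_0+h,\mu_{t_0+h})\le\Phi(t_0+h,\mu_{t_0+h})$ and $\mathcal{V}(t_0,\mu_0)=\Phi(t_0,\mu_0)$, gives
$$ 0 \;\le\; \int_{t_0}^{t_0+h}\!\!\int_{\T^d}\tfrac12|\beta(x)|^2\,d\mu_t(x)\,dt \;+\; \Phi(t_0+h,\mu_{t_0+h})-\Phi(t_0,\mu_0). $$

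Next I would expand the last difference by the chain rule along the flow $t\mapsto\mu_t$. Since $\Phi$ is a smooth test function, $t\mapsto\Phi(t,\mu_t)$ is of class $\mathcal{C}^1$, and testing the weak formulation of \eqref{eq:FPEIntro-28/04} against $\frac{\delta\Phi}{\delta\mu}(t,\mu_t,\cdot)$ and integrating by parts yields
$$ \frac{d}{dt}\Phi(t,\mu_t) = \partial_t\Phi(t,\mu_t) + \int_{\T^d}\!\Bigl[\tfrac12\Delta_x\tfrac{\delta\Phi}{\delta\mu}(t,\mu_t,x)+\beta(x)\cdot\nabla_x\tfrac{\delta\Phi}{\delta\mu}(t,\mu_t,x)-\bigl(V(x)-\lg V;\mu_t\rg\bigr)\tfrac{\delta\Phi}{\delta\mu}(t,\mu_t,x)\Bigr]d\mu_t(x). $$
Substituting this into the inequality above, dividing by $h$, and letting $h\downarrow0$ — using that $\mu_t\to\mu_0$ weakly as $t\downarrow t_0$ and that all the integrands are bounded and jointly continuous in $(t,x)$ (here the $\mathcal{C}^1$-in-$\mu$ regularity of $\Phi$, the twice-differentiability of $x\mapsto\frac{\delta\Phi}{\delta\mu}$ with continuous derivatives, and the continuity of $V$ are exactly what is used) — we obtain
$$ 0 \;\le\; \partial_t\Phi(t_0,\mu_0) + \int_{\T^d}\!\Bigl[\tfrac12|\beta(x)|^2+\tfrac12\Delta_x\tfrac{\delta\Phi}{\delta\mu}(t_0,\mu_0,x)+\beta(x)\cdot\nabla_x\tfrac{\delta\Phi}{\delta\mu}(t_0,\mu_0,x)-\bigl(V(x)-\lg V;\mu_0\rg\bigr)\tfrac{\delta\Phi}{\delta\mu}(t_0,\mu_0,x)\Bigr]d\mu_0(x). $$

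It then remains to make the optimal choice $\beta(x):=-\nabla_x\frac{\delta\Phi}{\delta\mu}(t_0,\mu_0,x)$, which is admissible because $x\mapsto\frac{\delta\Phi}{\delta\mu}(t_0,\mu_0,x)$ is $\mathcal{C}^1$ on the compact torus. For this $\beta$ one has $\tfrac12|\beta(x)|^2+\beta(x)\cdot\nabla_x\frac{\delta\Phi}{\delta\mu}(t_0,\mu_0,x)=-\tfrac12\bigl|\nabla_x\frac{\delta\Phi}{\delta\mu}(t_0,\mu_0,x)\bigr|^2$, and the last displayed inequality rearranges precisely into the asserted subsolution inequality. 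In this argument there is no genuine obstacle; the only technical points are \textbf{(i)} the chain rule $t\mapsto\Phi(t,\mu_t)\in\mathcal{C}^1$ with the stated derivative, which is the standard differentiation formula along an absolutely continuous curve in $\mathcal{P}(\T^d)$ and rests on the $\mathcal{C}^1$-in-$\mu$ regularity of the test function together with the parabolic smoothness of $\mu_t$ for $t>t_0$, and \textbf{(ii)} the passage to the limit $h\downarrow0$, which requires only the weak continuity of $t\mapsto\mu_t$ at $t_0$ and the boundedness and joint continuity of the integrands.
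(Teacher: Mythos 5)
Your proposal is correct and follows essentially the same route as the paper: dynamic programming with the test function replacing $\mathcal{V}$ at the later time, the chain rule for $\Phi$ along the controlled Fokker--Planck flow, division by the time increment, and the choice of control $-\nabla_x\frac{\delta\Phi}{\delta\mu}$ evaluated at the frozen point $(t_0,\mu_0)$. The only cosmetic difference is that you first derive the inequality for an arbitrary constant-in-time control $\beta$ and then optimize, whereas the paper plugs the feedback $\alpha_t(x)=-\nabla_x\frac{\delta\Phi}{\delta\mu}(t,\mu_0,x)$ in before passing to the limit; both are valid.
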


We omit to state and prove the corresponding super-solution property that can be proved in an analogous way because it will not be needed to obtain the rate of convergence.

\begin{proof} 
    We take $(t_0,\mu_0) \in (0,T) \times \mathcal{P}(\T^d)$ and a smooth test function $\Phi : [0,T] \times \mathcal{P}(\T^d) \rightarrow \R $ such that $\mathcal{V} - \Phi$ has a maximum at $(t_0,\mu_0)$. Without loss of generality (up to adding a constant to $\Phi$) we can assume that this maximum is $0$. By dynamic programming we have, for all $t_1 \in (t_0,T]$,
\begin{align*}
    \Phi(t_0,\mu_0) = \mathcal{V}(t_0,\mu_0) &= \inf_{(\mu_t, \alpha_t)} \int_{t_0}^{t_1} \int_{\T^d} \frac{1}{2} |\alpha_t(x)|^2 d\mu_t(x)dt + \mathcal{V}(t_1, \mu_{t_1}) \\
    & \leq \inf_{(\mu_t, \alpha_t)} \int_{t_0}^{t_1} \int_{\T^d} \frac{1}{2} |\alpha_t(x)|^2 d\mu_t(x)dt + \Phi(t_1, \mu_{t_1}),
\end{align*}
where the infimum is taken over the couples $(\mu, \alpha)$ solution to 
$$ \partial_t \mu_t - \frac{1}{2}\Delta \mu_t + \div( \alpha_t \mu_t) + (V - \lg V; \mu_t \rg ) \mu_t = 0, \quad \mbox{ in } (t_0,t_1) \times \T^d, \quad \mu_{t_0} = \mu_0.$$

Using Theorem 5.99 in \cite{CarmonaDelarue_book_I} with a minor modification to account for the zero-th order term in the Fokker-Planck equation, we expand $\Phi$ along the flow of $\mu_t$ to obtain 
\begin{align*} 
\Phi(t_1,\mu_{t_1}) = \Phi(t_0, \mu_0) + \int_{t_0}^{t_1} \int_{\T^d} &\bigl[ \partial_t \Phi(t, \mu_t) + \alpha_t(x) \cdot \nabla_x\frac{\delta \Phi}{\delta \mu}(t,\mu_t,x) + \frac{1}{2}\Delta_x \frac{\delta \Phi}{\delta \mu}(t,\mu_t,x) \\
&- \bigl( V(x) - \lg V; \mu_t \rg \bigr) \frac{\delta \Phi}{\delta \mu}(t,\mu_t,x) \bigl] d\mu_t(x)dt.
\end{align*}
Taking $\alpha_t(x) = - \nabla_x \frac{\delta \Phi}{\delta \mu}(t,\mu_0,x)$ for all $t \in [t_0,t_1]$ and all $x \in \T^d$, we get
\begin{align*}
    0& \leq \int_{t_0}^{t_1} \int_{\T^d}  \bigl[ \partial_t \Phi(t,\mu_t) +  \frac{1}{2}\Delta_x \frac{\delta \Phi}{\delta \mu}(t,\mu_t,x) - \bigl( V(x) - \lg V; \mu_t \rg \bigr) \frac{\delta \Phi}{\delta \mu}(t,\mu_t,x)   \\
    & + \frac{1}{2} \bigl| \nabla_x\frac{\delta \Phi}{\delta \mu}(t,\mu_0,x) \bigr|^2  - \nabla_x \frac{\delta \Phi}{\delta \mu}(t,\mu_0,x) \cdot \nabla_x \frac{\delta \Phi}{\delta \mu}(t,\mu_t,x)  \bigr] d\mu_t(x)dt.
\end{align*}
Dividing by $t_1-t_0$ and letting $t_1 \rightarrow t_0$ we obtain
\begin{align*} 
- \partial_t \Phi(t_0,\mu_0) -\frac{1}{2} \int_{\T^d} \Delta_x \frac{\delta \Phi}{\delta \mu}(t_0,\mu_0,x) &d\mu_0(x) + \frac{1}{2} \int_{\T^d} \bigl|\nabla_x \frac{\delta \Phi}{\delta \mu}(t,\mu_0,x) \bigr|^2 d\mu_0(x) \\
&+ \int_{\T^d} \bigl( V(x) - \lg V;\mu_0 \rg \bigr) \frac{\delta \Phi }{\delta \mu}(t_0, \mu_0,x) d\mu_0(x) \leq 0, 
\end{align*}
which is the desired sub-solution property.
\end{proof}

\section{Rate of convergence: formal argument}

\label{sec:formalargumentprojection}

In this section we explain how to use the equation satisfied by $\mathcal{V}$ to infer quantitative information about the convergence of $v^N$. We assume that we are in an idealized setting where $\mathcal{V}$ is twice differentiable with bounded derivatives and the dynamic programming equation \eqref{eq:MasterHJB16/04} is satisfied in a strong sense. 
We introduce the function $\tilde{v}^N$ defined, for all $N \geq 1$ and for all $(t, \bx , \ba) \in [0,T] \times (\T^d)^N \times \R^N$ by
$$ 
\tilde{v}^N(t, \bx, \ba) = \mathcal{V} ( t, \mu^N_{\bx,\ba} ). 
$$
Recall that the empirical measures $\mu^N_{\bx,\ba}$ on $\T^d$ were defined in \eqref{fo:empirical} by:
$$
\mu^N_{\bx,\ba} = \sum_{i=1}^N \frac{1}{n^{i}[\ba]}\delta_{x^{i}},  \quad \quad n^{i}[\ba] = e^{a^{i}} \sum_{j=1}^N e^{-a^{j}} \quad 1 \leq i \leq N. 
$$
We can compute the derivatives of $\tilde{v}^N$ as follows
$$ \partial_t \tilde{v}^N(t, \bx, \ba) = \partial_t \mathcal{V}(t, \mu^N_{\bx , \ba}),$$
$$\partial_{x^{i}} \tilde{v}^N(t, \bx,\ba) = \frac{1}{n^{i}} \nabla_x \frac{\delta \mathcal{V}}{\delta \mu}(t, \mu^N_{\bx,\ba},x^{i}), $$
$$ \partial^2_{x^{i}x^{j}} \tilde{v}^N(t, \bx,\ba) = \delta_{ij} \frac{1}{n^{i}} \nabla^2_x \frac{\delta \mathcal{V}}{\delta \mu}(t, \mu^N_{\bx,\ba}, x^{i}) +  \frac{1}{n^{i}} \frac{1}{n^{j}} \nabla^2_{xy} \frac{\delta^2 \mathcal{V}}{\delta \mu^2}(t, \mu^N_{\bx,\ba}, x^{i}, x^{j}),$$
and finally (after some more or less tedious computations),
$$\partial_{a^{i}} \tilde{v}^N(t, \bx,\ba) =-\frac{1}{n^{i}} \Bigl(  \frac{\delta \mathcal{V}}{\delta \mu}(t, \mu^N_{\bx,\ba},x^{i}) - \int_{\T^d} \frac{\delta \mathcal{V}}{\delta \mu}(t, \mu^N_{\bx,\ba}, x) d\mu^N_{\bx,\ba}(x) \Bigr).$$
Evaluating the equation satisfied by $\mathcal{V}$ at $\mu_{\bx,\ba}^N$ and using the above relations we find that $\tilde{v}^N$ solves (still formally, assuming the $\tilde{v}^N$ is regular enough),
\begin{equation}
\left \{
\begin{array}{ll}
 \displaystyle    -\partial_t \tilde{v}^N - \frac{1}{2}\sum_{i=1}^N \Delta_{x^{i}} \tilde{v}^N - \sum_{i=1}^N V(x^{i}) \partial_{a^{i}} \tilde{v}^N + \sum_{i=1}^N \frac{n^{i}[\ba]}{2} |\nabla_{x^{i}} \tilde{v}^N|^2 \\
\displaystyle \hspace{120pt}= - \sum_{i=1}^N \bigl( \frac{1}{n^{i}[\ba]} \bigr)^2 \tr \nabla^2_{xy} \frac{\delta^2 \mathcal{V}}{\delta \mu^2}(t, \mu^N_{\bx,\ba},x^{i},x^{i})   , \quad \mbox{ in } [t_0,T]\times (\T^d)^N \times (\R)^N \\ 
\displaystyle    \tilde{v}^N(T, \bx, \ba) = \sum_{i=1}^N \frac{1}{n^{i}[\ba]} g(x^{i}), \quad \mbox{ in } (\T^d)^N \times (\R)^N. 
\end{array}
\right.
\end{equation}
We fix initial conditions $(t_0, \bx, \ba)$ and take an admissible control $(\alpha_t^1, \dots, \alpha_t^N)_{t \geq t_0}$ together with the corresponding trajectories $(\bX_t, \bA_t)_{t \geq t_0}$. Recalling the notations introduced in \eqref{fo:N-state_dynamics}, which are nothing more than the randomly controlled versions of  the deterministic notations \eqref{fo:weights} and \eqref{fo:empirical}, we introduce the notation
$$ 
\frac{1}{N_t^{i}}  := \frac{1}{n_i[\bA_t]} =\frac{e^{-A_t^{i}}}{\sum_{j=1}^N e^{-A_t^{j}}}, 
\qquad\text{and} \qquad 
\mu^N_t := \mu^N_{\bX_t,\bA_t}=\sum_{i=1}^N \frac{1}{N_t^{i}}\delta_{X_t^{i}} 
$$
and, by Itô's Lemma, we expand
\begin{align*}
    \mathcal{V}(T,\mu^N_T) &= \mathcal{V}(t_0,\mu^N_0) + \int_{t_0}^T \bigl \{ \partial_t \mathcal{V}(t,\mu^N_t) + \sum_{i=1}^N \frac{1}{N_t^{i}} \alpha_t^{i} \cdot \nabla_x \frac{\delta \mathcal{V}}{\delta \mu} (t, \mu^N_t , X_t^{i}) \bigr \} dt \\
    &+ \int_{t_0}^T \sum_{i=1}^N \frac{1}{N_t^{i}} V(X_t^{i}) \Bigl( \frac{\delta \mathcal{V}}{\delta \mu}(t, \mu^N_t, X_t^{i}) - \int_{\T^d} \frac{\delta \mathcal{V}}{\delta \mu}(t, \mu^N_t,x)d\mu^N_t(x) \Bigr) dt \\
    &+ \frac{1}{2} \int_{t_0}^T \sum_{i=1}^N \frac{1}{N_t^{i}} \Delta_x \frac{\delta \mathcal{V}}{\delta \mu}(t, \mu^N_t, X_t^{i}) + \frac{1}{2} \sum_{i=1}^N \frac{1}{(N_t^{i})^2} \tr \nabla^2_{xy} \frac{\delta^2 \mathcal{V}}{\delta \mu^2}(t,\mu^N_t,X_t^{i}, X_t^{i})  dt \\
    &+  \int_{t_0}^T \sum_{i=1}^N  \frac{1}{N_t^{i}} \nabla_x \frac{\delta \mathcal{V}}{\delta \mu}(t,\mu^N_t, X_t^{i}) \cdot dB_t^{i}.
\end{align*}
Using the equation satisfied by $\mathcal{V}$, we get
\begin{align*}
    \mathcal{V}(t_0, \mu_0^N) &= \sum_{i=1}^N \frac{1}{N_T^{i}} g(X_T^{i}) - \int_{t_0}^T  \sum_{i=1}^N \frac{1}{N_t^{i}} \bigl( \alpha_t^{i} \cdot \nabla_x \frac{\delta \mathcal{V}}{\delta \mu} (t, \mu^N_t , X_t^{i}) + \frac{1}{2}|\nabla_x \frac{\delta \mathcal{V}}{\delta \mu}(t,\mu_t^N,X_t^{i}) |^2 \bigr) dt \\
    &- \frac{1}{2}\int_{t_0}^T \sum_{i=1}^N \frac{1}{(N_t^{i})^2} \tr \nabla^2_{xy} \frac{\delta^2 \mathcal{V}}{\delta \mu^2}(t,\mu^N_t,X_t^{i}, X_t^{i})  dt \\
    &-  \int_{t_0}^T \sum_{i=1}^N  \frac{1}{N_t^{i}} \nabla_x \frac{\delta \mathcal{V}}{\delta \mu}(t,\mu^N_t, X_t^{i}) \cdot dB_t^{i},
\end{align*}
which can be rewritten into
\begin{align*}
    \mathcal{V}(t_0, \mu_0^N) &= \sum_{i=1}^N \frac{1}{N_T^{i}} g(X_T^{i}) + \int_{t_0}^T \sum_{i=1}^N \frac{1}{N_t^{i}} \frac{1}{2} |\alpha_t^{i}|^2 dt -  \frac{1}{2}\int_{t_0}^T  \sum_{i=1}^N \frac{1}{N_t^{i}} \bigl| \alpha_t^{i} + \nabla_x \frac{\delta \mathcal{V}}{\delta \mu} (t, \mu^N_t , X_t^{i}) \bigr|^2dt \\
    &-\frac{1}{2}\int_{t_0}^T \sum_{i=1}^N \frac{1}{(N_t^{i})^2} \tr \nabla^2_{xy} \frac{\delta^2 \mathcal{V}}{\delta \mu^2}(t,\mu^N_t,X_t^{i}, X_t^{i})  dt \\
    &-  \int_{t_0}^T \sum_{i=1}^N  \frac{1}{N_t^{i}} \nabla_x \frac{\delta \mathcal{V}}{\delta \mu}(t,\mu^N_t, X_t^{i}) \cdot dB_t^{i}.
\end{align*}
Taking expectations we obtain
\begin{align}
    \mathcal{V}(t_0, \mu_0^N) &= J^N(t_0, \bx,\ba,(\alpha_t)^N_{t \geq t_0}) - \E \bigl[ \frac{1}{2}\int_{t_0}^T  \sum_{i=1}^N \frac{1}{N_t^{i}} \bigl| \alpha_t^{i} + \nabla_x \frac{\delta \mathcal{V}}{\delta \mu} (t, \mu^N_t , X_t^{i}) \bigr|^2dt \bigr]  \label{eq:towardcomparisonvf} \\
 \notag   &- \E \bigl[ \frac{1}{2} \int_{t_0}^T \sum_{i=1}^N \frac{1}{(N_t^{i})^2} \tr \nabla^2_{xy} \frac{\delta^2 \mathcal{V}}{\delta \mu^2}(t,\mu^N_t,X_t^{i}, X_t^{i})  dt \bigr].
\end{align}
Now we make the following observation. If $V$ is bounded there is $C>0$ depending on $\norm{V}_{\mathcal{C}^0(\T^d)}$ and $T$ such that, whatever the controls $(\alpha_t^1, \dots, \alpha_t^N)_{t \geq t_0}$ it holds
\begin{equation}
     \frac{1}{N_t^{i}} \leq \frac{C}{n^{i}[\ba]}, \quad \forall i=1, \dots,N, \quad \forall t \in [t_0,T].
\label{eq:constantforai}
\end{equation}
[recall the $A_t^{i} = a^{i} + \int_{t_0}^t V(X_s^{i})ds$]. 
As a consequence, taking $ \alpha_t^{i} = - \nabla_x \frac{\delta \mathcal{V}}{\delta \mu}(t, \mu_t^N,X^{i}_t)$ in \eqref{eq:towardcomparisonvf}, we have 
$$v^N(t_0,\bx,\ba) \leq J^N(t_0, \bx,\ba,(\alpha_t)^N_{t \geq t_0}) \leq \mathcal{V}(t_0, \mu_0^N) + C \sum_{i=1}^N \bigl ( \frac{1}{n^{i}[\ba]} \bigr)^2,$$
for some $C>0$ depending on the constant appearing in \eqref{eq:constantforai} and on  $\norm{\nabla^2_{xy} \frac{\delta^2 \mathcal{V}}{\delta \mu^2}}_{L^{\infty}}$. If we take the infimum over all possible controls $(\alpha_t^1, \dots, \alpha_t^N)_{t \geq t_0}$ in \eqref{eq:towardcomparisonvf} we get
$$ \mathcal{V}(t_0, \mu_0^N) \leq v^N(t_0,\bx,\ba) + C\sum_{i=1}^N \bigl ( \frac{1}{n^{i}[\ba]} \bigr)^2.$$
Combining the two inequalities leads to
$$ \bigl| \mathcal{V}(t_0, \mu^N_{\bx,\ba}) - v^N(t_0, \bx,\ba) \bigr| \leq C \sum_{i=1}^N \bigl ( \frac{1}{n^{i}[\ba]} \bigr)^2.$$
Unfortunately, this argument is completely formal unless we were able to prove that $\mathcal{V}$ has enough regularity.

\section{The sup-convolution and its properties.}
\label{sec:supconvolandproperties16/04}

In order to make the formal argument of the previous section rigorous, we introduce a suitable mollification of $\mathcal{V}$, and apply the projection argument to this mollified version. The obvious difficulty is to find a mollification procedure that is \textit{compatible} with the equation. That is, we need the mollified function to satisfy a similar equation, at least up to some error terms that can be explicitly controlled. 

Similarly to \cite{ddj2023}, for $\epsilon >0$ we define 
\begin{equation} 
\mathcal{V}^{\epsilon}(t,q) := \sup_{ \mu \in \mathcal{P}(\T^d)} \{ \mathcal{V}(t,\mu) - \frac{1}{2\epsilon} \norm{q-\mu}^2_{H^{-k}(\T^d)} \}, \qquad  q \in H^{-k}(\T^d), \; t \in [t_0,T],
\label{eq:defnVeps18/04}
\end{equation}
where we recall that $H^{-k}(\T^d)$ and its metrics were introduced at the beginning of Section \ref{sec:AnalysisVF}. Notice that $\mathcal{V}^{\epsilon}$ is now defined over $[0,T] \times H^{-k}(\T^d)$. 

Before we go on with the analysis of $\mathcal{V}^{\epsilon}$, let us recall some facts about the analysis and calculus over the space $H^{-k}(\T^d)$. The material is taken from \cite[Section 2.5]{ddj2023}.

\subsection{Some preliminary facts about $H^{-k}(\T^d)$.}

We recall that $H^k(\T^d)$ is the space of functions with derivatives up to order $k$ in $L^2(\T^d)$, endowed with inner product
$$ \lg \phi, \psi \rg_k := \sum_{|j| \leq k} \int_{\T^d} \nabla^j \phi(x) \nabla^j\psi (x) dx,  $$
where the sum is taken over all the multi indices $ j = (j^1, \dots, j^d) \in \mathbb{N}^d$ with $|j| := j^1 + \dots + j^d \leq k$ and $\nabla^j := \partial^{j^1}_{x^1}\cdots \partial^{j^d}_{x^d} $. We denote by $\norm{ \cdot}_{H^k(\T^d)}$ the associated norm. 
The dual space of $H^k(\T^d)$, i.e. the space of bounded linear functionnals over $H^k(\T^d)$, is denoted by $H^{-k}(\T^d)$, and $\norm{ \cdot }_{H^{-k}(\T^d)}$ is the norm inherited by duality, that is
\begin{equation} 
\norm{q}_{H^{-k}(\T^d)} := \sup_{ \phi \in H^k(\T^d), \norm{\phi}_{H^{k}(\T^d)} \leq 1} q(\phi).
\label{eq:dualitynorm}
\end{equation}
If $q$ is an element of $H^{-k}(\T^d)$, we denote by $q^*$ the unique element of $H^k(\T^d)$ such that 
$$ q(\phi) = \lg \phi, q^* \rg_k \quad \mbox{ for all } \quad \phi \in H^k(\T^d).$$
Then the map $q \mapsto q^*$ is an isometry from $H^{-k}(\T^d)$ onto $H^k(\T^d)$ when $H^{-k}$ is endowed with the inner product $\lg q, p \rg_{-k} := \lg q^*, p^* \rg_k $ which is consistent with the norm \eqref{eq:dualitynorm} inherited by duality.

When $k > d/2 +2$, $H^k(\T^d)$ continuously embeds into $\mathcal{C}^2(\T^d)$. In particular, $\mathcal{P}(\T^d)$ can be seen as a (compact) subset of $H^{-k}(\T^d)$ via the identification $\mu(\phi) := \int_{\T^d} \phi(x) d\mu(x)$ for every $\phi \in H^k(\T^d)$.

We say that $\Phi : H^{-k}(\T^d) \rightarrow \R$ is differentiable at $q \in H^{-k}(\T^d)$ if it is Fr\'echet differentiable at this point, and we denote by $D_{-k} \Phi(q) \in H^k(\T^d)$ its derivative, by $\nabla_{-k} \Phi(q) = (D_{-k} \Phi(q))^* \in H^{-k}(\T^d)$ its gradient, and we denote by $\mathcal{C}^1(H^{-k}(\T^d))$ the space of continuously Fr\'echet differentiable functions over $H^{-k}(\T^d)$.

In Section \ref{sec:AnalysisVF}, before stating Lemma \ref{lem:subsolpropV16/04}, we introduced a notion of derivative for functions $\Phi$ defined over $\mathcal{P}(\T^d)$. The link between the two notions is as follows, see \cite[Lemma 2.15]{ddj2023}: if we assume that $\Phi$ belongs to $\mathcal{C}^1(H^{-k}(\T^d))$ for $k > d/2 +1$, the restriction $\Phi|_{\mathcal{P}(\T^d)}$ belongs to $\mathcal{C}^1(\mathcal{P}(\T^d))$ and 
\begin{equation} 
\frac{\delta \Phi}{\delta \mu}(\mu,x) = D_{-k}\Phi(\mu)(x) - \int_{\T^d} D_{-k}\Phi(\mu)(y) dy.  \label{eq:linkderivatives16/04}
\end{equation}
Notice that the second term in the right-hand side of \eqref{eq:linkderivatives16/04} comes from our normalization convention \eqref{eq:normalizingconvention16/04} for the linear derivative.  Moreover, by Sobolev embeddings,  $x \mapsto \frac{\delta \Phi}{\delta \mu}(\mu,x)$ is $\mathcal{C}^1$ with jointly continuous derivatives. We can easily extend \cite[Lemma 2.15]{ddj2023} when $k >d/2 +2$, in which case $x \mapsto \frac{\delta \Phi}{\delta \mu}(\mu,x)$ is $\mathcal{C}^2$ with jointly continuous derivatives. In particular, when $k > d/2 + 2$ (the restriction to $\mathcal{P}(\T^d)$ of) a function $\Phi \in \mathcal{C}^1(H^{-k}(\T^d))$ is a smooth test function in the sense defined before Lemma \ref{lem:subsolpropV16/04}.
If $\Phi$ belongs to $\mathcal{C}^1( H^{-k}(\T^d))$ we also denote 
\begin{equation}
    [\Phi]_{\mathcal{C}^{1,1}(H^{-k})} := \sup_{q_1 \neq q_2 \in H^{-k}} \frac{\norm{ \nabla_{-k} \Phi(q_2) - \nabla_{-k}\Phi(q_1)}_{H^{-k}(\T^d)}}{\norm{ q_2-q_1}_{H^{-k}(\T^d)}}
\label{eq:notationC11_17/04}
\end{equation}
the Lipschitz norm of its gradient.

The next lemma, stated and proved in \cite[Proposition 2.16]{ddj2023}  justifies the use of the metric $\norm{ \cdot}_{H^{-k}(\T^d)}$ in the regularization procedure.

\begin{lem}
\label{lem:C11controlsD2mm-19/04}
    Assume that $k > d/2 +1$. Then there is a constant $C_{k,d}>0$ depending only on $k$ and $d$ such that, for all $\Phi \in \mathcal{C}^1( H^{-k}(\T^d))$ and all $\mu^1,\mu^2 \in \mathcal{P}(\T^d)$ we have
    $$ \sup_{x \in \T^d} | \nabla_x \frac{\delta \Phi}{\delta\mu}(\mu^2,x) - \nabla_x \frac{\delta \Phi}{ \delta \mu}(\mu^1,x) | \leq C_{k,d} [\Phi]_{\mathcal{C}^{1,1}(H^{-k})} d_1(\mu^1,\mu^2).  $$
\end{lem}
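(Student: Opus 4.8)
The plan is to reduce the pointwise $\mathcal{C}^1$-in-$x$ estimate on the linear derivative to a statement about the Fréchet derivative $D_{-k}\Phi$, exploiting the relation \eqref{eq:linkderivatives16/04} and the fact that differentiating in $x$ commutes with restricting to $\mathcal{P}(\T^d)$. First I would fix $x\in\T^d$ and a unit coordinate vector $e_j$, and observe that for each $\mu\in\mathcal{P}(\T^d)$ the map $\mu\mapsto \partial_{x_j}\frac{\delta\Phi}{\delta\mu}(\mu,x) = \partial_{x_j}\bigl(D_{-k}\Phi(\mu)\bigr)(x)$ is, by the Sobolev embedding $H^k(\T^d)\hookrightarrow\mathcal{C}^1(\T^d)$ (valid since $k>d/2+1$), the composition of $D_{-k}\Phi$ with the bounded linear evaluation functional $\ell_{j,x}:\psi\mapsto\partial_{x_j}\psi(x)$ on $H^k(\T^d)$, whose operator norm is bounded by $C_{k,d}$. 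Crucially, $\ell_{j,x}$, viewed as an element of $H^{-k}(\T^d)$, has $H^{-k}$-norm at most $C_{k,d}$ as well. Hence
\[
\Bigl|\partial_{x_j}\tfrac{\delta\Phi}{\delta\mu}(\mu^2,x) - \partial_{x_j}\tfrac{\delta\Phi}{\delta\mu}(\mu^1,x)\Bigr|
= \bigl| \ell_{j,x}\bigl( D_{-k}\Phi(\mu^2) - D_{-k}\Phi(\mu^1) \bigr) \bigr|
\le C_{k,d}\, \norm{ D_{-k}\Phi(\mu^2) - D_{-k}\Phi(\mu^1) }_{H^k(\T^d)}.
\]

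Next I would rewrite the right-hand side in terms of the gradient: since $q\mapsto q^*$ is an isometry from $H^{-k}$ onto $H^k$, we have $\norm{D_{-k}\Phi(\mu^2)-D_{-k}\Phi(\mu^1)}_{H^k} = \norm{\nabla_{-k}\Phi(\mu^2)-\nabla_{-k}\Phi(\mu^1)}_{H^{-k}}$, and by definition \eqref{eq:notationC11_17/04} of the $\mathcal{C}^{1,1}$ seminorm this is at most $[\Phi]_{\mathcal{C}^{1,1}(H^{-k})}\,\norm{\mu^2-\mu^1}_{H^{-k}(\T^d)}$. It then remains to bound $\norm{\mu^2-\mu^1}_{H^{-k}(\T^d)}$ by $C_{k,d}\,d_1(\mu^1,\mu^2)$: for any test function $\phi$ with $\norm{\phi}_{H^k(\T^d)}\le 1$, the Sobolev embedding $H^k\hookrightarrow\mathcal{C}^1$ (here one needs $k>d/2+1$) gives $\mathrm{Lip}(\phi)\le C_{k,d}$, so $\int_{\T^d}\phi\,d(\mu^2-\mu^1)\le C_{k,d}\,d_1(\mu^1,\mu^2)$ by the Kantorovich--Rubinstein duality; taking the supremum over such $\phi$ yields the claim. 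Combining the three estimates and taking the supremum over $x\in\T^d$ and over $j\in\{1,\dots,d\}$ (absorbing the dimensional factor from $|\nabla_x\cdot|\le\sum_j|\partial_{x_j}\cdot|$ into $C_{k,d}$) gives the desired inequality.

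The only subtle point — and the one I would spell out carefully rather than the routine embedding estimates — is the first step: one must check that the pointwise $x$-derivative of $\frac{\delta\Phi}{\delta\mu}(\mu,x)$ really does agree with $\ell_{j,x}\bigl(D_{-k}\Phi(\mu)\bigr)$, i.e. that the normalizing correction $-\int_{\T^d}D_{-k}\Phi(\mu)(y)\,dy$ in \eqref{eq:linkderivatives16/04} is constant in $x$ and hence disappears upon differentiation, and that the evaluation-of-a-derivative functional $\ell_{j,x}$ is genuinely a bounded linear functional on $H^k(\T^d)$ with norm controlled uniformly in $x$. Both follow from the continuity of the embedding $H^k(\T^d)\hookrightarrow\mathcal{C}^1(\T^d)$, but this is exactly where the hypothesis $k>d/2+1$ is used, so I would invoke it explicitly there. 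Everything else is a routine chain of duality and Sobolev estimates, as in \cite[Proposition 2.16]{ddj2023}.
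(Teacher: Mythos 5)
Your proof is correct and follows essentially the same route as the proof the paper relies on (the paper defers to \cite[Proposition 2.16]{ddj2023}): differentiate the identity \eqref{eq:linkderivatives16/04} in $x$ so the normalizing constant drops out, bound the evaluation-of-derivative functional on $H^k(\T^d)$ via the Sobolev embedding $H^k\hookrightarrow\mathcal{C}^1$ (using $k>d/2+1$), convert the $H^k$-increment of $D_{-k}\Phi$ into the $H^{-k}$-increment of $\nabla_{-k}\Phi$ by the isometry, apply the $\mathcal{C}^{1,1}$ seminorm, and finally compare $\norm{\mu^2-\mu^1}_{H^{-k}(\T^d)}$ with $d_1(\mu^1,\mu^2)$ by Kantorovich--Rubinstein duality. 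No gaps; the points you flag as needing care (constancy in $x$ of the normalization term, uniform-in-$x$ boundedness of $\ell_{j,x}$) are exactly the right ones and are handled correctly.
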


To see the link with the projection argument of the previous section, recall \eqref{eq:towardcomparisonvf} and observe that, in the previous Lemma, if $\Phi$ is twice differentiable with respect to the measure argument over $\mathcal{P}(\T^d)$ this translates into the bound
$$ \sup_{x,y} | \nabla^2_{xy} \frac{\delta^2 \Phi}{\delta \mu^2}(\mu,x,y) | \leq C_{k,d} \;[\Phi]_{\mathcal{C}^{1,1}(H^{-k})} 
$$
for the same constant $C_{k,d}$. In other words, if we can produce $\mathcal{C}^{1,1}$-regularity in $H^{-k}(\T^d)$, we can automatically control the term involving the second order linear derivative of the value function that determines the rate of convergence.

\subsection{Regularity Properties of $\mathcal{V}^{\epsilon}$.}

Applying \cite[Proposition 4.3]{ddj2023}, we find that $\mathcal{V}^{\epsilon}(t, \cdot)$ belongs to $\mathcal{C}^{1,1}(H^{-k}(\T^d))$ and we can quantify the blow up of the $\mathcal{C}^{1,1}$ norm with respect to $\epsilon$. In the next statement, which is identical to \cite[Proposition 4.3]{ddj2023}, it is convenient to denote respectively by $C_L$ and $C_S$ the Lipschitz and semi-concavity constants of $\mathcal{V}$ with respect to $\norm{ \cdot }_{H^{k}(\T^d)}$. 

\begin{prop}\label{prop:supconvproperties} 
For all $\epsilon < \frac{1}{2 C_S}$ and all $t \in [0,T]$ we have 
\begin{enumerate}
    \item For all $\mu \in \mathcal{P}(\T^d)$,
$$ 0 \leq \mathcal{V}^{\epsilon}(t,\mu) - \mathcal{V}(t,\mu) \leq 2C_L^2 \epsilon.$$

    \item $\mathcal{V}^{\epsilon}(t, \cdot)$ belongs to  $\mathcal{C}^{1}(H^{-k}(\T^d))$, and, with the notation \eqref{eq:notationC11_17/04}, 
    \begin{align} \label{c11bound}
    \big[\mathcal{V}^{\epsilon}(t,\cdot)\big]_{\mathcal{C}^{1,1}(H^{-k})} \leq  \frac{1}{\epsilon}.
    \end{align}
    \item For $\mu \in \mathcal{P}(\T^d)$, the gradient of $\mathcal{V}^{\epsilon}(t,\cdot)$ at $\mu$ is given by
    \begin{align*}
        \nabla_{-k} \mathcal{V}^{\epsilon}(t,\mu) = \frac{1}{\epsilon} \big(\mu_{\epsilon} - \mu\big), 
    \end{align*}
    where $\mu_{\epsilon}$ is the unique element of $\mathcal{P}(\T^d)$ such that 
    \begin{align*}
        \mathcal{V}^{\epsilon}(t,\mu) = \mathcal{V}(t,\mu_{\epsilon}) - \frac{1}{2 \epsilon} \norm{\mu_{\epsilon} - \mu}_{-k}^2.
    \end{align*}
    and $\mu_{\epsilon}$ satisfies $\norm{\mu - \mu_{\epsilon}}_{-k} \leq 2 C_L \epsilon$.

     \item $\mathcal{V}^{\epsilon}(t,\cdot)$ is Lipschitz continuous over $\mathcal{P}(\T^d)$. More precisely, for all $\mu_1, \mu_2 \in \mathcal{P}(\T^d)$,
$$ |\mathcal{V}^{\epsilon}(t,\mu_1)- \mathcal{V}^{\epsilon}(t,\mu_2) | \leq 2C_L \|\mu_1- \mu_2 \|_{-k}.$$

\end{enumerate}
\end{prop}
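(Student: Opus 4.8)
The plan is to reproduce, in the present non-local setting, the structure of \cite[Proposition 4.3]{ddj2023}, since the sup-convolution \eqref{eq:defnVeps18/04} is taken over exactly the same Hilbert space $H^{-k}(\T^d)$ and uses the same penalization $\tfrac{1}{2\epsilon}\norm{q-\mu}^2_{H^{-k}(\T^d)}$. The only inputs needed from the problem-specific analysis are the Lipschitz and semi-concavity bounds of $\mathcal{V}(t,\cdot)$ with respect to $\norm{\cdot}_{H^{-k}(\T^d)}$, which are furnished by Proposition \ref{prop:lipandSCV16/004} (with constants $C_L$ and $C_S$ depending on $T$, $k$, $\norm{V}_{\mathcal{C}^k(\T^d)}$, $\norm{g}_{\mathcal{C}^k(\T^d)}$). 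So the strategy is: first record that $\mu\mapsto\mathcal{V}(t,\mu)+C_S\norm{\mu}^2_{-k}$ is concave along line segments in $\mathcal{P}(\T^d)$ — which is exactly what \eqref{eq:semiconcavC-k16/04} says after rewriting the quadratic term via the parallelogram-type identity $(1-\lambda)\norm{\mu^1}^2+\lambda\norm{\mu^2}^2-\norm{(1-\lambda)\mu^1+\lambda\mu^2}^2=\lambda(1-\lambda)\norm{\mu^1-\mu^2}^2$ — and then invoke the abstract Hilbert-space sup-convolution lemma of \cite{ddj2023} as a black box.

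Concretely, the steps in order are: \textbf{(1)} Fix $t\in[0,T]$ and $\epsilon<\tfrac{1}{2C_S}$. Since $\mathcal{V}(t,\cdot)$ is bounded and Lipschitz on the compact set $\mathcal{P}(\T^d)\subset H^{-k}(\T^d)$, the supremum in \eqref{eq:defnVeps18/04} is attained; for the lower bound on the gap take $\mu=q$, and for the upper bound use the Lipschitz estimate $\mathcal{V}(t,\mu)-\mathcal{V}(t,q)\le C_L\norm{\mu-q}_{-k}$ together with $C_L r-\tfrac{r^2}{2\epsilon}\le \tfrac{C_L^2\epsilon}{2}$, but since we follow \cite{ddj2023} verbatim the bound $2C_L^2\epsilon$ is what comes out of their computation — this gives item (1). \textbf{(2)} For the maximizer $\mu_\epsilon$ at a point $\mu\in\mathcal{P}(\T^d)$: optimality gives $\mathcal{V}(t,\mu_\epsilon)-\mathcal{V}(t,\mu)\ge \tfrac{1}{2\epsilon}\norm{\mu_\epsilon-\mu}^2_{-k}$, and the Lipschitz bound on the left gives $\norm{\mu_\epsilon-\mu}_{-k}\le 2C_L\epsilon$; uniqueness of $\mu_\epsilon$ follows from strict concavity of $\mu'\mapsto\mathcal{V}(t,\mu')-\tfrac{1}{2\epsilon}\norm{\mu'-\mu}^2_{-k}$, which holds because $\tfrac{1}{\epsilon}>2C_S$ so the quadratic penalty dominates the semi-concavity defect. \textbf{(3)} Differentiability and the formula $\nabla_{-k}\mathcal{V}^\epsilon(t,\mu)=\tfrac{1}{\epsilon}(\mu_\epsilon-\mu)$: this is the standard Danskin/envelope computation for a sup-convolution in Hilbert space — $\mathcal{V}^\epsilon(t,\cdot)$ is a supremum of functions $q\mapsto \mathcal{V}(t,\mu)-\tfrac{1}{2\epsilon}\norm{q-\mu}^2_{-k}$ which are $\tfrac{1}{\epsilon}$-semiconcave uniformly in $\mu$, hence $\mathcal{V}^\epsilon(t,\cdot)$ is $\tfrac{1}{\epsilon}$-semiconcave, and combined with the fact that it is a sup-convolution (hence also semiconvex away from the constraint, via the usual argument) one obtains $\mathcal{C}^{1,1}$ with $[\mathcal{V}^\epsilon(t,\cdot)]_{\mathcal{C}^{1,1}(H^{-k})}\le\tfrac{1}{\epsilon}$, which is item (2), and the gradient identity is read off from the first-order condition. \textbf{(4)} Item (4), Lipschitz continuity of $\mathcal{V}^\epsilon(t,\cdot)$ with constant $2C_L$: combine item (1) (the uniform-in-$\mu$ bound $|\mathcal{V}^\epsilon-\mathcal{V}|\le 2C_L^2\epsilon$) with the Lipschitz bound on $\mathcal{V}$ itself — or more directly note that $\nabla_{-k}\mathcal{V}^\epsilon(t,\mu)=\tfrac{1}{\epsilon}(\mu_\epsilon-\mu)$ has $H^{-k}$-norm $\le 2C_L$ by item (3).

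Since the statement is quoted as being \emph{identical} to \cite[Proposition 4.3]{ddj2023}, the cleanest route is simply to verify that the hypotheses of that proposition are met in our setting — namely that $\mathcal{V}(t,\cdot)$ is $C_L$-Lipschitz and $C_S$-semiconcave with respect to $\norm{\cdot}_{H^{-k}(\T^d)}$ — and then cite it. Those two facts are precisely the content of Proposition \ref{prop:lipandSCV16/004} (with the $H^{-k}$ versions of \eqref{eq:Lip/C-k16/03} and \eqref{eq:semiconcavC-k16/04}), so there is genuinely nothing new to prove here beyond this bookkeeping. I therefore expect the proof to be a two-line reduction, and the only potential subtlety — not really an obstacle — is to make sure the semi-concavity constant $C_S$ coming out of Proposition \ref{prop:lipandSCV16/004} is the one appearing in the constraint $\epsilon<\tfrac{1}{2C_S}$, which is just a matter of matching notation. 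The substantive work (the analysis of the non-local Fokker–Planck stability behind $C_L$ and $C_S$) has already been done in Lemma \ref{lem:stability15/04} and Proposition \ref{prop:lipandSCV16/004}.
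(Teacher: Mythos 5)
Your proposal is correct and matches the paper exactly: the paper gives no independent proof of Proposition \ref{prop:supconvproperties}, but simply observes that $\mathcal{V}(t,\cdot)$ is $C_L$-Lipschitz and $C_S$-semi-concave with respect to $\norm{\cdot}_{H^{-k}(\T^d)}$ by Proposition \ref{prop:lipandSCV16/004} and then cites \cite[Proposition 4.3]{ddj2023} verbatim. Your sketched internal steps (attainment, the gap bound, uniqueness of $\mu_\epsilon$ from $\tfrac{1}{\epsilon}>2C_S$, the envelope formula for the gradient, and the $2C_L$ Lipschitz bound) are consistent with that cited result, so nothing further is needed.
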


We proceed to show the sub-solution property for $\mathcal{V}^{\epsilon}$.

\begin{prop}
 $\mathcal{V}^{\epsilon}$ is a sub-solution over $[0,T] \times \mathcal{P}(\T^d)$ to the dynamic programming equation  \eqref{eq:MasterHJB16/04} for $\mathcal{V}$ up to an error of order $ C\epsilon$. More precisely, there is a constant $C>0$ independent of $\epsilon >0$ such that, for all $\Phi \in \mathcal{C}^1([0,T] \times H^{-k}(\T^d))$ such that $\mathcal{V}^{\epsilon} - \Phi$ has a global maximum at $(t_0,\mu_0) \in [0,T] \times \mathcal{P}(\T^d)$ it holds
    \begin{align*} 
- \partial_t \Phi(t_0,\mu_0) -\frac{1}{2} \int_{\T^d} \Delta_x \frac{\delta \Phi}{\delta \mu}(t_0,\mu_0,x) &d\mu_0(x) + \frac{1}{2} \int_{\T^d} \bigl|\nabla_x \frac{\delta \Phi}{\delta \mu}(t,\mu_0,x) \bigr|^2 d\mu_0(x) \\
&+ \int_{\T^d} \bigl( V(x) - \lg V,\mu_0 \rg \bigr) \frac{\delta \mathcal{V}}{\delta \mu}(t_0, \mu_0,x) d\mu_0(x) \leq C \epsilon. 
\end{align*}
\end{prop}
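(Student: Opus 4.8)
The plan is to transfer the sub-solution property of $\mathcal{V}$ (established in Lemma \ref{lem:subsolpropV16/04}) to $\mathcal{V}^{\epsilon}$ by exploiting the variational definition of the sup-convolution. Suppose $\Phi \in \mathcal{C}^1([0,T] \times H^{-k}(\T^d))$ is such that $\mathcal{V}^{\epsilon} - \Phi$ has a global maximum at $(t_0,\mu_0)$. Let $\mu_0^{\epsilon}$ be the unique maximizer in \eqref{eq:defnVeps18/04} associated with $(t_0,\mu_0)$, so that $\mathcal{V}^{\epsilon}(t_0,\mu_0) = \mathcal{V}(t_0,\mu_0^{\epsilon}) - \frac{1}{2\epsilon}\norm{\mu_0 - \mu_0^{\epsilon}}_{-k}^2$, and by Proposition \ref{prop:supconvproperties}(3) we have $\norm{\mu_0 - \mu_0^{\epsilon}}_{-k} \leq 2C_L \epsilon$. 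The first step is to produce a test function for $\mathcal{V}$ at $(t_0,\mu_0^{\epsilon})$: define
$$ \Psi(t,\nu) := \Phi\bigl(t, \nu + (\mu_0 - \mu_0^{\epsilon})\bigr) + \frac{1}{2\epsilon}\norm{\nu - \mu_0^{\epsilon}}_{-k}^2 - \frac{1}{2\epsilon}\norm{(\nu + \mu_0 - \mu_0^{\epsilon}) - \mu_0}_{-k}^2 $$
(the last two terms cancel, but I keep the bookkeeping transparent: effectively $\Psi(t,\nu) = \Phi(t,\nu + \mu_0 - \mu_0^{\epsilon})$ up to the relevant shift). More carefully: for any $\nu$, by definition of the sup-convolution $\mathcal{V}^{\epsilon}(t, \nu + \mu_0 - \mu_0^{\epsilon}) \geq \mathcal{V}(t,\nu) - \frac{1}{2\epsilon}\norm{(\nu + \mu_0 - \mu_0^{\epsilon}) - \nu}_{-k}^2 = \mathcal{V}(t,\nu) - \frac{1}{2\epsilon}\norm{\mu_0 - \mu_0^{\epsilon}}_{-k}^2$, with equality at $(t_0,\mu_0^{\epsilon})$. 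Combining with the maximum property of $\mathcal{V}^{\epsilon} - \Phi$, the function $(t,\nu) \mapsto \mathcal{V}(t,\nu) - \Phi(t, \nu + \mu_0 - \mu_0^{\epsilon})$ has a global maximum at $(t_0, \mu_0^{\epsilon})$. Hence $\Psi(t,\nu) := \Phi(t, \nu + \mu_0 - \mu_0^{\epsilon})$ is a legitimate smooth test function for the sub-solution inequality for $\mathcal{V}$ at $(t_0,\mu_0^{\epsilon})$, noting that since $k > d/2+2$ the shift keeps us in the class of smooth test functions in the sense preceding Lemma \ref{lem:subsolpropV16/04}.

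The second step is to apply Lemma \ref{lem:subsolpropV16/04} to $\Psi$ at $(t_0,\mu_0^{\epsilon})$, which gives
\begin{align*}
-\partial_t \Phi(t_0,\mu_0) - \frac{1}{2}\int_{\T^d} \Delta_x \frac{\delta\Phi}{\delta\mu}(t_0,\mu_0^{\epsilon} + \mu_0 - \mu_0^{\epsilon},x)\, d\mu_0^{\epsilon}(x) &+ \frac{1}{2}\int_{\T^d}\bigl|\nabla_x \tfrac{\delta\Phi}{\delta\mu}(t_0,\mu_0^{\epsilon} + \cdots,x)\bigr|^2 d\mu_0^{\epsilon}(x) \\
&+ \int_{\T^d}(V(x) - \lg V;\mu_0^{\epsilon}\rg)\tfrac{\delta\Phi}{\delta\mu}(\cdots,x)\,d\mu_0^{\epsilon}(x) \leq 0,
\end{align*}
where I used $\partial_t \Psi(t_0,\mu_0^{\epsilon}) = \partial_t\Phi(t_0,\mu_0)$ and $\frac{\delta\Psi}{\delta\mu}(t_0,\mu_0^{\epsilon},x) = \frac{\delta\Phi}{\delta\mu}(t_0,\mu_0,x)$ (the flat derivative is unaffected by a constant shift of the measure argument, up to the normalization \eqref{eq:normalizingconvention16/04} which drops out in all terms involving $V(x) - \lg V;\mu\rg$ or derivatives $\nabla_x, \Delta_x$). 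The third step is the error estimate: I must compare this inequality, which is ``at $(t_0,\mu_0^{\epsilon})$'', with the claimed inequality ``at $(t_0,\mu_0)$''. I replace $\mu_0^{\epsilon}$ by $\mu_0$ everywhere using $\norm{\mu_0 - \mu_0^{\epsilon}}_{-k} \leq 2C_L\epsilon$, together with the Lipschitz-in-$H^{-k}$ bounds on the relevant quantities: the Sobolev embedding $H^k \hookrightarrow \mathcal{C}^2$ controls $\frac{\delta\Phi}{\delta\mu}$, $\nabla_x\frac{\delta\Phi}{\delta\mu}$, $\Delta_x\frac{\delta\Phi}{\delta\mu}$ in $\mathcal{C}^0$ and gives their $H^{-k}$-Lipschitz dependence on the measure argument (via $[\Phi]_{\mathcal{C}^{1,1}(H^{-k})}$ and its higher-order analogue, or simply via the $\mathcal{C}^1([0,T]\times H^{-k})$ regularity of $\Phi$ restricted to a neighbourhood of the compact set $\mathcal{P}(\T^d)$), and $V \in \mathcal{C}^k$ controls the $V$-terms. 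Each of the three integral terms changes by at most $C\epsilon$, with $C$ depending only on the (locally uniform) bounds on $\Phi$ and its derivatives near $\mu_0$, on $\norm{V}_{\mathcal{C}^k}$, and on $C_L$; crucially $C$ does not depend on $\epsilon$.

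The main obstacle is the bookkeeping in the third step: one must make sure every term is genuinely Lipschitz in the measure argument with respect to $\norm{\cdot}_{H^{-k}}$ with a constant independent of $\epsilon$. The delicate point is the quadratic term $\frac12\int |\nabla_x \frac{\delta\Phi}{\delta\mu}|^2 d\mu$, for which one writes the difference as $\frac12\int(|\nabla_x\frac{\delta\Phi}{\delta\mu}(\cdot,\mu_0,x)|^2 - |\nabla_x\frac{\delta\Phi}{\delta\mu}(\cdot,\mu_0^{\epsilon},x)|^2)\,d\mu_0(x) + \frac12\int|\nabla_x\frac{\delta\Phi}{\delta\mu}(\cdot,\mu_0^{\epsilon},x)|^2 d(\mu_0 - \mu_0^{\epsilon})(x)$; the first piece is $\leq C\norm{\mu_0 - \mu_0^{\epsilon}}_{-k}$ by $\mathcal{C}^{1,1}$-regularity of $\Phi$ and the $H^k \hookrightarrow \mathcal{C}^2$ embedding, and the second is $\leq \norm{\,|\nabla_x\frac{\delta\Phi}{\delta\mu}(\cdot,\mu_0^{\epsilon},\cdot)|^2\,}_{\mathcal{C}^k}\norm{\mu_0 - \mu_0^{\epsilon}}_{\mathcal{C}^{-k}} \leq C\norm{\mu_0-\mu_0^{\epsilon}}_{-k}$ since $\mu_0, \mu_0^{\epsilon}$ are probability measures and $H^k$-norms dominate $\mathcal{C}^{k}$-norms only in one direction — here one uses instead that testing against a $\mathcal{C}^k$ function is controlled by the $H^{-k}$ norm when $k > d/2$. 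Apart from this care with norms, everything is a direct consequence of the results already established; there is no conceptual difficulty beyond organizing the shift $\nu \mapsto \nu + \mu_0 - \mu_0^{\epsilon}$ correctly.
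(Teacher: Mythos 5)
Your structural setup is the same as the paper's: you identify the sup-convolution maximizer $\mu_0^{\epsilon}$, shift the test function to $\Psi(t,\nu)=\Phi(t,\nu+\mu_0-\mu_0^{\epsilon})$, check that $\mathcal{V}-\Psi$ has a global maximum at $(t_0,\mu_0^{\epsilon})$, and invoke the sub-solution property of $\mathcal{V}$ (Lemma \ref{lem:subsolpropV16/04}), using that $\frac{\delta\Psi}{\delta\mu}(t_0,\mu_0^{\epsilon},\cdot)=\frac{\delta\Phi}{\delta\mu}(t_0,\mu_0,\cdot)$. The gap is entirely in your third step, and it is a real one. First, the statement requires a constant $C$ independent of $\Phi$ (and of $\epsilon$), but your error budget is expressed in terms of ``locally uniform bounds on $\Phi$ and its derivatives near $\mu_0$'' and even $[\Phi]_{\mathcal{C}^{1,1}(H^{-k})}$ --- a quantity that need not be finite, since $\Phi$ is only assumed $\mathcal{C}^1$, and that in the intended application is of order $1/\epsilon$, which would destroy the $O(\epsilon)$ bound. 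Second, and more fundamentally, the terms you want to move from $\mu_0^{\epsilon}$ to $\mu_0$ cannot be estimated by $C\norm{\mu_0-\mu_0^{\epsilon}}_{H^{-k}}$ by duality: the integrands $\Delta_x\frac{\delta\Phi}{\delta\mu}(t_0,\mu_0,\cdot)$ and $\bigl|\nabla_x\frac{\delta\Phi}{\delta\mu}(t_0,\mu_0,\cdot)\bigr|^2$ are only in $H^{k-2}$, respectively a product of $H^{k-1}$ functions (equivalently $\mathcal{C}^0$, $\mathcal{C}^1$ by Sobolev), not in $H^k$ or $\mathcal{C}^k$, so pairing them against $\mu_0^{\epsilon}-\mu_0$ is not controlled by the $H^{-k}$ (or $\mathcal{C}^{-k}$) smallness of that difference; your bound $\norm{\,|\nabla_x\frac{\delta\Phi}{\delta\mu}|^2\,}_{\mathcal{C}^k}\norm{\mu_0-\mu_0^{\epsilon}}_{\mathcal{C}^{-k}}$ invokes a $\mathcal{C}^k$ norm that is not available.

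The missing ingredient is the first-order condition at the maximum in the $q$-variable, which you never extract: since $\mu_0$ maximizes $q\mapsto -\frac{1}{2\epsilon}\norm{q-\mu_0^{\epsilon}}_{-k}^2-\Phi(t_0,q)$, one gets $D_{-k}\Phi(t_0,\mu_0)=\frac{1}{\epsilon}(\mu_0^{\epsilon}-\mu_0)^{*}$, hence the $\Phi$-independent bound $\norm{D_{-k}\Phi(t_0,\mu_0)}_{H^k}\leq 2C_L$. This identity is what makes the error terms tractable: it rewrites the pairing of each problematic integrand against $\mu_0^{\epsilon}-\mu_0$ as $\epsilon$ times an intrinsic $H^k$ quantity built from $u:=D_{-k}\Phi(t_0,\mu_0)$. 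The Laplacian error becomes $\frac{\epsilon}{2}\lg \Delta_x u, u\rg_k\leq 0$ (no bound needed, just the sign), the quadratic-gradient error becomes $\frac{\epsilon}{2}\lg |\nabla_x u|^2,u\rg_k$, which is bounded by a constant depending only on $\norm{u}_{H^k}\leq 2C_L$ via the trilinear estimate of \cite[Lemma 5.5]{ddj2023}, and the two $V$-terms are bounded by $C\norm{V}\,\norm{u}_{H^k}\,\norm{\mu_0^{\epsilon}-\mu_0}_{H^{-k}}\leq C\epsilon$. Without this step your constants depend on $\Phi$ and the key terms are not even $O(\epsilon)$, so the argument as written does not close.
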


\begin{proof}
Take $(t_0,\mu_0) \in [0,T] \times \mathcal{P}(\T^d)$ and assume that $\Phi : [0,T] \times H^{-k} (\T^d) \rightarrow \R$ is a $\mathcal{C}^1(H^{-k}(\T^d))$ function touching $\mathcal{V}^{\epsilon}$ from above at $(t_0,\mu_0)$, that is
    $$ \mathcal{V}^{\epsilon}(t_0,\mu_0) - \Phi(t_0,\mu_0) = \sup_{ (t,q) \in [0,T] \times H^{-k}(\T^d)} \mathcal{V}^{\epsilon}(t,q) - \Phi(t,q). $$
Now, we let $\mu^{\epsilon} \in \mathcal{P}(\T^d)$ be a point such that
$$\mathcal{V}^{\epsilon}(t_0, \mu_0) = \mathcal{V}(t_0, \mu^{\epsilon}) - \frac{1}{2\epsilon} \norm{\mu_0 - \mu^{\epsilon} }_{H^{-k}(\T^d)}^2,$$
and we easily check that $(t_0,\mu_0, \mu^{\epsilon})$ is a maximum, over $[0,T] \times H^{-k}(\T^d) \times \mathcal{P}(\T^d)$ of 
\begin{equation} 
(t,q,\mu) \mapsto \mathcal{V}(t,\mu) - \frac{1}{2\epsilon} \norm{q - \mu}_{-k}^2 - \Phi(t,q).
\label{eq:D-kPhi16/04}
\end{equation}
In particular, $ \mu_0$ is a maximum over $H^{-k}(\T^d)$ of 
$$
q \mapsto  - \frac{1}{2\epsilon} \norm{q-\mu^{\epsilon} }^2_{-k} - \Phi(t_0,q),
$$
and therefore, by regularity of $\Phi$,
$$ 
D_{-k} \Phi(t_0,\mu_0) = \frac{1}{\epsilon}( \mu^{\epsilon} - \mu_0)^{*} \in H^k(\T^d).
$$
Using the Lipschitz regularity of $\mathcal{V}$ from Proposition \ref{prop:lipandSCV16/004} and Proposition \ref{prop:supconvproperties} above we deduce that 
\begin{equation} 
\norm{ D_{-k} \Phi(t_0,\mu_0)}_{H^k(\T^d)} = \frac{1}{\epsilon} \norm{ \mu^{\epsilon} - \mu_0}_{H^{-k}(\T^d)} \leq 2C_L.   
\label{eq:16/04-10:14}
\end{equation}
We let $\Phi^{\epsilon}(t,\mu) := \Phi(t, \mu + \mu_0 - \mu^{\epsilon}) $ and we check that $(t_0, \mu^{\epsilon})$ is a maximum over $[0,T] \times \mathcal{P}(\T^d)$ of
$$
(t,\mu) \mapsto \mathcal{V}(t,\mu) - \Phi^{\epsilon}(t,\mu).
$$
$\Phi^{\epsilon}$ is a smooth test function in the sense of Lemma \ref{lem:subsolpropV16/04}, and we can use the sub-solution property of $\mathcal{V}$ at $(t_0,\mu^{\epsilon})$ to infer that
\begin{align*}  
-\partial_t \Phi^{\epsilon}(t_0,\mu^{\epsilon}) - \frac{1}{2}\int_{\T^d} \Delta_x \frac{\delta \Phi^{\epsilon}}{\delta \mu}(t_0,\mu^{\epsilon},x)d\mu^{\epsilon}(x) +  \frac{1}{2} \int_{\T^d} \bigl|\nabla_x \frac{\delta \Phi^{\epsilon}}{\delta \mu}(t_0,\mu^{\epsilon},x) |^2 d\mu^{\epsilon}(x) \\
\hspace{140pt} + \int_{\T^d} \bigl( V(x) - \lg V;\mu^{\epsilon} \rg \bigr) \frac{\delta \Phi^{\epsilon}}{\delta \mu}(t,\mu^{\epsilon},x) d\mu^{\epsilon}(x) \leq 0.
\end{align*}
By definition of $\Phi^{\epsilon}$ we easily have $\frac{\delta \Phi^{\epsilon}}{\delta \mu}(t_0,\mu^{\epsilon})= \frac{\delta \Phi}{\delta \mu}(t_0, \mu_0)$ and $\partial_t \Phi^{\epsilon}(t_0,\mu^{\epsilon}) = \partial_t \Phi(t_0,\mu_0)$ and therefore
\begin{align*}  
-\partial_t \Phi(t_0,\mu_0) - \frac{1}{2}\int_{\T^d} \Delta_x \frac{\delta \Phi}{\delta \mu}(t_0,\mu_0,x)d\mu_0(x) + \frac{1}{2} \int_{\T^d}  \bigl|\nabla_x \frac{\delta \Phi}{\delta \mu}(t_0,\mu_0,x) |^2 d\mu_0(x) \\
\hspace{140pt} + \int_{\T^d} \bigl( V(x) - \lg V;\mu_0 \rg \bigr) \frac{\delta \Phi}{\delta \mu}(t,\mu_0,x) d\mu_0(x) \leq E_1 + E_2 + E_3 + E_4,
\end{align*}
with $E_1$, $E_2$, $E_3$ and $E_4$ defined by
\begin{align*}
E_1 &:= \frac{1}{2} \int_{\T^d} \Delta_x \frac{\delta \Phi}{\delta \mu}(t_0,\mu_0,x)d(\mu^{\epsilon} - \mu_0)(x),\\
 E_2 &:= \frac{1}{2} \int_{\T^d} \bigl| \nabla_x \frac{\delta \Phi}{\delta \mu}(t,\mu_0,x) \bigr|^2 d(\mu_0 - \mu^{\epsilon})(x), \\
E_3 &:= \int_{\T^d} \lg V; \mu^{\epsilon} - \mu_0 \rg \frac{\delta \Phi}{\delta \mu}(t_0,\mu_0,x) d\mu_0(x),\\
E_4 &:= \int_{\T^d} \bigl( V(x) - \lg V; \mu^{\epsilon} \rg \bigr) \frac{\delta \Phi}{\delta \mu}(t_0,\mu_0,x)d(\mu^{\epsilon}- \mu_0)(x).
\end{align*}
For $E_1$ and $E_2$, we proceed as in \cite[Proposition 5.1 ]{ddj2023} (see also  Lemmas 5.4 and 5.5 therein). On the one hand, the Laplacian being  negative over $H^{k}(\T^d)$ we have
$$ E_1 =  \epsilon \frac{1}{2} \lg \Delta_x D_{-k} \Phi(t_0,\mu_0), D_{-k}\Phi(t_0,\mu_0) \rangle_k \leq 0.   $$
On the other hand, $E_2$ can be rewritten as
$$ E_2 = \frac{\epsilon}{2} \lg |\nabla_x (D_{-k} \Phi(t_0,\mu_0))|^2, D_{-k} \Phi(t_0,\mu_0) \rangle_k  $$
and, by \cite[Lemma 5.5]{ddj2023} we can find $C>0$ depending only on  $\norm{  D_{-k} \Phi(t_0,\mu_0) }_{H^{k}(\T^d)}$ (and therefore independent from $\epsilon$ thanks to \eqref{eq:16/04-10:14}) such that 
$$ 
|\lg |\nabla_x (D_{-k} \Phi(t_0,\mu_0))|^2, D_{-k} \Phi(t_0,\mu_0) \rangle_k| \leq C,
$$
and therefore, $E_2 \leq C \epsilon$ for some $C>0$ depending only on $C_L$, the Lipschitz constant of $\mathcal{V}$ with respect to $\norm{ \cdot}_{H^{-k}(\T^d)}$.
For $E_3$ we have
\begin{align*}
    E_3 &\leq \norm{V}_{H^k(\T^d)} \norm{ \mu^{\epsilon} - \mu_0}_{H^{-k}(\T^d)} \norm{ \frac{\delta \Phi}{\delta \mu}(t_0,\mu_0)}_{\mathcal{C}^0(\T^d)}.
\end{align*}
Since $k > d/2 +2$, Sobolev embedding gives:
\begin{align*} 
\norm{  \frac{\delta \Phi}{\delta \mu}(t_0,\mu_0)}_{\mathcal{C}^0(\T^d)} & = \norm{  D_{-k} \Phi (t_0,\mu_0) - \int_{\T^d} D_{-k} \Phi(t_0,\mu_0)(y)dy   }_{\mathcal{C}^0(\T^d)} \\
&\leq 2 \norm{  D_{-k} \Phi (t_0,\mu_0)  }_{\mathcal{C}^0(\T^d)} \\
&\leq C_{k,d} \norm{  D_{-k} \Phi (t_0,\mu_0)  }_{H^k(\T^d)} 
\end{align*}
for some constant $C_{k,d} >0$ depending only on $k$ and $d$. Combined with \eqref{eq:16/04-10:14} we find $C>0$ independent from $\epsilon >0$ such that $E_3 \leq C \epsilon.$
We proceed similarly for for $E_4$,
\begin{align*}
E_4 &\leq 2 \norm{ V}_{\mathcal{C}^0(\T^d)} \norm{ \frac{\delta \Phi}{\delta \mu}(t_0,\mu_0,\cdot)}_{H^k(\T^d)} \norm{\mu^{\epsilon}- \mu_0}_{H^{-k}(\T^d)} \\
&\leq 4 \epsilon \norm{V}_{\mathcal{C}^0(\T^d)} \norm{ D_{-k} \Phi(t_0,\mu_0)}_{H^k(\T^d)} \frac{1}{\epsilon} \norm{ \mu^{\epsilon} - \mu_0}_{H^{-k}(\T^d)}
\end{align*}
and we conclude from \eqref{eq:16/04-10:14} that $E_4 \leq C \epsilon$ for some $C>0$ independent from $\epsilon >0$.
Combining these estimates we deduce that $\mathcal{V}^{\epsilon}$ is a sub-solution over $[0,T] \times \mathcal{P}(\T^d)$ up to an error of size $C \epsilon$.
\end{proof}

\section{The hard inequality.}

\label{sec:hardineq}

The goal of this section is to prove the following upper bound on the difference $\mathcal{V} - v^N$.
\begin{prop} 
\label{prop:hardineq19/04}
   Assume that $V$ and $g$ belong to $\mathcal{C}^k(\T^d)$ for some $k > d/2 +2$. There is a constant $C$ depending on $T$, $\norm{V}_{\mathcal{C}^k(\T^d)}$ and $\norm{g}_{\mathcal{C}^k(\T^d)}$ such that, for every $(t, \bx, \ba) \in [0,T] \times (\T^d)^N \times \R^N$,
$$ \mathcal{V}(t, \mu^N_{\bx,\ba}) \leq v^N(t,\bx,\ba) + C \sqrt{ \sum_{i=1}^N \bigl( \frac{1}{n^{i}[\ba]} \bigr)^2    }.  $$ 
\end{prop}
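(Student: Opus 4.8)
\smallskip
\noindent\textit{Proof strategy.} The plan is to run the projection argument of Section~\ref{sec:formalargumentprojection} with the mollified function $\mathcal{V}^{\epsilon}$ in place of $\mathcal{V}$, exploiting the regularity gained in Proposition~\ref{prop:supconvproperties}, and then to optimize the resulting error over $\epsilon$. Fix $(t_0,\bx,\ba)$ and $\epsilon>0$, and set $\tilde v^{N,\epsilon}(t,\bx,\ba):=\mathcal{V}^{\epsilon}(t,\mu^N_{\bx,\ba})$. Since $\mathcal{V}\le\mathcal{V}^{\epsilon}$ on $\mathcal{P}(\T^d)$ by Proposition~\ref{prop:supconvproperties}(1), it is enough to prove
\[
\tilde v^{N,\epsilon}(t_0,\bx,\ba) \;\le\; v^N(t_0,\bx,\ba) + C\epsilon + \frac{C}{\epsilon}\sum_{i=1}^N \Bigl(\frac{1}{n^i[\ba]}\Bigr)^2
\]
for all $\epsilon<\tfrac{1}{2C_S}$, and then to take $\epsilon = \bigl(\sum_i (1/n^i[\ba])^2\bigr)^{1/2}$; this quantity lies in $[N^{-1/2},1]$ because the weights $1/n^i[\ba]$ sum to one, and if it exceeds $\tfrac{1}{2C_S}$ the asserted bound is trivial since $\|\mathcal{V}\|_{\infty},\|v^N\|_{\infty}\le\|g\|_{\mathcal{C}^0(\T^d)}$ (the running cost is nonnegative and $\sum_i 1/n^i[\ba]=1$).

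\smallskip
\noindent\textit{Step 1 (a perturbed HJB inequality for $\tilde v^{N,\epsilon}$).} For fixed $\ba$ (resp. fixed $t$) the map $\bx\mapsto\mu^N_{\bx,\ba}\in H^{-k}(\T^d)$ is smooth (here $k>d/2+2$ is used), so $\tilde v^{N,\epsilon}$ is $\mathcal{C}^1$ in $(\bx,\ba)$ with locally Lipschitz gradient and is Lipschitz in $t$; in particular it is differentiable in $t$ and twice differentiable in $(\bx,\ba)$ at Lebesgue-a.e. point. At such a point we reproduce the chain-rule identities of Section~\ref{sec:formalargumentprojection} with $\mathcal{V}^{\epsilon}$ in place of $\mathcal{V}$ (recalling that $x\mapsto\frac{\delta\mathcal{V}^{\epsilon}}{\delta\mu}(t,\mu,x)=D_{-k}\mathcal{V}^{\epsilon}(t,\mu)(x)-\int_{\T^d}D_{-k}\mathcal{V}^{\epsilon}(t,\mu)(y)\,dy$ is automatically $\mathcal{C}^2$ by Sobolev embedding) and invoke the sub-solution property of $\mathcal{V}^{\epsilon}$ — valid up to the error $C\epsilon$ by the last Proposition of Section~\ref{sec:supconvolandproperties16/04} — to control the first-order terms. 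The only departure from the formal computation concerns the second-order-in-measure term: instead of requiring $\mathcal{V}^{\epsilon}$ to be twice differentiable in $\mu$, we note that, by \eqref{c11bound} and Lemma~\ref{lem:C11controlsD2mm-19/04}, the map $\mu\mapsto\nabla_x\frac{\delta\mathcal{V}^{\epsilon}}{\delta\mu}(t,\mu,\cdot)$ is $\tfrac{C_{k,d}}{\epsilon}$-Lipschitz for $d_1$; since perturbing the $i$-th atom of $\mu^N_{\bx,\ba}$ by $h$ moves $\mu^N_{\bx,\ba}$ by at most $|h|/n^i[\ba]$ in $d_1$, the contribution to $\partial^2_{x^ix^i}\tilde v^{N,\epsilon}$ coming from the variation of the measure argument has operator norm at most $\tfrac{C_{k,d}}{\epsilon\,(n^i[\ba])^2}$. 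Altogether, at a.e. $(t,\bx,\ba)$,
\[
-\partial_t \tilde v^{N,\epsilon} - \tfrac12\sum_{i=1}^N\Delta_{x^i}\tilde v^{N,\epsilon} - \sum_{i=1}^N V(x^i)\,\partial_{a^i}\tilde v^{N,\epsilon} + \sum_{i=1}^N\tfrac{n^i[\ba]}{2}\bigl|\nabla_{x^i}\tilde v^{N,\epsilon}\bigr|^2 \;\le\; C\epsilon + \frac{C}{\epsilon}\sum_{i=1}^N\Bigl(\frac{1}{n^i[\ba]}\Bigr)^2 ,
\]
while $\tilde v^{N,\epsilon}(T,\bx,\ba)\le\sum_i\frac{1}{n^i[\ba]}g(x^i) + 2C_L^2\epsilon$ by Proposition~\ref{prop:supconvproperties}(1) at $t=T$.

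\smallskip
\noindent\textit{Step 2 (comparison with $v^N$ and optimization).} Mollifying $\tilde v^{N,\epsilon}$ in $(t,\bx,\ba)$ by a standard kernel of width $\rho$ produces a smooth function $\tilde v^{N,\epsilon,\rho}$ which, using $|\nabla w_\rho|^2\le(|\nabla w|^2)_\rho$ and the local Lipschitz continuity of $V$ and of $\ba\mapsto n^i[\ba]$, still satisfies the inequality of Step~1 up to an extra additive error $o_\rho(1)$, with terminal datum $\le\sum_i\frac{1}{n^i[\ba]}g(x^i)+2C_L^2\epsilon+o_\rho(1)$. For an arbitrary admissible control $\balpha=(\alpha^1,\dots,\alpha^N)$ with trajectories $(\bX_t,\bA_t)_{t\ge t_0}$, Itô's formula applied to $t\mapsto\tilde v^{N,\epsilon,\rho}(t,\bX_t,\bA_t)$, the elementary bound $\tfrac{n}{2}|p|^2\ge-\alpha\cdot p-\tfrac{1}{2n}|\alpha|^2$, the inequality of Step~1, and the vanishing of the martingale under expectation give
\[
\tilde v^{N,\epsilon,\rho}(t_0,\bx,\ba) \;\le\; J^N(t_0,\bx,\ba,\balpha) + 2C_L^2\epsilon + o_\rho(1) + \E\Bigl[\int_{t_0}^T\Bigl(C\epsilon + \frac{C}{\epsilon}\sum_{i=1}^N\frac{1}{(N^i_t)^2}\Bigr)dt\Bigr].
\]
Inserting the a priori bound $1/N^i_t\le C/n^i[\ba]$ of \eqref{eq:constantforai}, letting $\rho\to0$, and taking the infimum over admissible controls, we obtain the displayed estimate of the first paragraph, with $C$ depending only on $T$, $\|V\|_{\mathcal{C}^k(\T^d)}$ and $\|g\|_{\mathcal{C}^k(\T^d)}$. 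Optimizing in $\epsilon$ as explained there, and using $\mathcal{V}(t_0,\mu^N_{\bx,\ba})\le\tilde v^{N,\epsilon}(t_0,\bx,\ba)$, yields the claim.

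\smallskip
\noindent\textit{Main difficulty.} The delicate point is Step~1. Because $\mathcal{V}^{\epsilon}$ is only $\mathcal{C}^{1,1}$ — and not $\mathcal{C}^2$ — in the measure argument, the formal differentiation of $\mathcal{V}$ along the weighted empirical measure is not licit, and one must (i) transfer the $\mathcal{C}^{1,1}(H^{-k})$ bound of Proposition~\ref{prop:supconvproperties}(2), via Lemma~\ref{lem:C11controlsD2mm-19/04}, into a genuine pointwise bound on the Hessian block $\partial^2_{x^ix^i}\tilde v^{N,\epsilon}$, and (ii) justify that the sub-solution property of $\mathcal{V}^{\epsilon}$ over $\mathcal{P}(\T^d)$ descends to the finite-dimensional inequality at a.e. point of the slice $\{\mu^N_{\bx,\ba}\}$ — the time variable requiring a Lebesgue-point argument since $\mathcal{V}^{\epsilon}$ is merely Lipschitz in $t$, and the transfer being carried out either by lifting finite-dimensional test functions to $H^{-k}(\T^d)$ or by working directly with the a.e. classical derivatives of $\tilde v^{N,\epsilon}$. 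A further, minor, technicality is the degeneracy of the $\ba$-dynamics $dA^i_t=V(X^i_t)\,dt$, which is why in Step~2 we pass through the finite-dimensional mollification $\tilde v^{N,\epsilon,\rho}$ rather than invoking an Itô–Krylov formula for $W^{2,\infty}_{\mathrm{loc}}$ functions directly.
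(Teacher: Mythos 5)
Your proposal follows essentially the same route as the paper: sup-convolution $\mathcal{V}^{\epsilon}$, projection to $v^{\epsilon,N}(t,\bx,\ba)=\mathcal{V}^{\epsilon}(t,\mu^N_{\bx,\ba})$, an almost-everywhere subsolution inequality with error $C\epsilon+\frac{C}{\epsilon}\sum_i\bigl(\frac{1}{n^{i}[\ba]}\bigr)^2$ (the paper's Lemma \ref{lem:subsolvepsN19/04}, resting on the $\mathcal{C}^{1,1}(H^{-k})$ bound of Proposition \ref{prop:supconvproperties} and Lemma \ref{lem:C11controlsD2mm-19/04}), verification along an arbitrary admissible control combined with the bound $1/N^{i}_t\le C/n^{i}[\ba]$, and optimization in $\epsilon$. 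The only departure is technical and harmless: you mollify $v^{\epsilon,N}$ in finite dimensions before applying the classical Itô formula and then let the mollification parameter vanish, whereas the paper applies the Itô--Krylov formula directly, using the bounded weak derivatives provided by Lemma \ref{lem:regvepsN19/04}.
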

Recall that the probability measure $\mu_{\bx, \ba}^N$ and the variables $n^{i}[\ba]$ are defined in\eqref{fo:empirical} and \eqref{fo:weights}.
For all $(t,\bx,\ba) \in [0,T] \times (\T^d)^N \times \R^N$ we define the projection
\begin{equation} 
v^{\epsilon,N}(t,\bx,\ba) := \mathcal{V}^{\epsilon}(t, \mu^N_{\bx,\ba}).
\label{eq:defnvepsN18/04}
\end{equation}
Our goal is to show that $v^{\epsilon,N}$ is almost a classical sub-solution for the equation for $v^N$, up to an error that can be quantified in terms of $\epsilon$, $N$ and some bound on $\ba$, and then use a comparison argument.
We start by showing that $v^{\epsilon,N}$ inherits some regularity from the $\mathcal{C}^{1,1}$ regularity of $\mathcal{V}^{\epsilon}$ with respect to $\norm{ \cdot }_{H^{-k}(\T^d)}$.

\begin{lem}
\label{lem:regvepsN19/04}
    Let $v^{\epsilon,N} :[0,T] \times (\T^d)^N \times (\R)^N$ be defined by \eqref{eq:defnvepsN18/04}.Then
\begin{enumerate}
    \item $v^{\epsilon,N}$ is uniformly Lipschitz over $[0,T] \times (\T^d)^N \times (\R)^N$.
    \item $v^{\epsilon,N}$ is differentiable in $(\bx,\ba) \in (\T^d)^N \times (\R)^N$ with 
\begin{equation} 
\nabla_{x^{i}} v^{\epsilon,N}(t,\bx,\ba) = \frac{1}{n^{i}[\ba]} \nabla_x \bigl( D_{-k} \mathcal{V}^{\epsilon}(t,\mu^N_{\bx,\ba}) \bigr) (x^{i}) \label{eq:derivativeinxvepsN19/04}
\end{equation}
\begin{equation} 
\partial_{a^{i}} v^{\epsilon,N}(t, \bx,\ba) =-\frac{1}{n^{i}[\ba]} \Bigl(  D_{-k} \mathcal{V}^{\epsilon}(t,\mu^N_{\bx,\ba}) (x^{i})  - \int_{\T^d} D_{-k} \mathcal{V}^{\epsilon}(t,\mu^N_{\bx,\ba}) (x) d\mu^N_{\bx,\ba}(x) \Bigr).
\label{eq:derivativeinavepsN19/04}
\end{equation}
    \item The derivative $\nabla_{x^{i}}v^{\epsilon,N}$ is Lipschitz continuous in $\bx$ uniformly in $(t,\ba).$
    \item The derivatives $\partial_t v^{\epsilon,N}$, $\nabla_{x^{i}} v^{\epsilon,N}$, $\partial_{a^{i}} v^{\epsilon,N} $ and $\nabla^2_{x^{i}x^j} v^{\epsilon,N}$ exist almost everywhere and define versions of the weak derivatives of $v^{\epsilon,N}$. Moreover, these derivatives are uniformly bounded over $[0,T] \times (\T^d)^N \times (\R)^N$ for all $1 \leq i,j \leq N$.
\end{enumerate}
\end{lem}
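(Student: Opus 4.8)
The plan is to transfer the regularity of $\mathcal{V}^{\epsilon}(t,\cdot)$ on $H^{-k}(\T^d)$ --- established in Proposition~\ref{prop:supconvproperties}, namely that $\mathcal{V}^{\epsilon}(t,\cdot)\in\mathcal{C}^{1,1}(H^{-k}(\T^d))$ with $[\mathcal{V}^{\epsilon}(t,\cdot)]_{\mathcal{C}^{1,1}(H^{-k})}\le 1/\epsilon$ and Lipschitz constant $2C_L$ --- through the finite-dimensional map $(\bx,\ba)\mapsto \mu^N_{\bx,\ba}\in H^{-k}(\T^d)$. First I would observe that $(\bx,\ba)\mapsto\mu^N_{\bx,\ba}=\sum_i \frac{1}{n^i[\ba]}\delta_{x^i}$ is a smooth map into $H^{-k}(\T^d)$ (for $k>d/2+2$ each $\delta_{x}$ lies in $H^{-k}$ with $\|\delta_x\|_{H^{-k}}$ bounded, and $x\mapsto\delta_x$, $\ba\mapsto 1/n^i[\ba]$ are smooth with bounded derivatives on, say, a region where the $n^i[\ba]$ stay away from zero --- but note $1/n^i[\ba]=e^{-a^i}/\sum_j e^{-a^j}\in(0,1)$ automatically, and $\sum_i 1/n^i[\ba]=1$, so $\mu^N_{\bx,\ba}$ is always a probability measure and the map has globally bounded first and second derivatives in $(\bx,\ba)$). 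Composing a $\mathcal{C}^{1,1}$ function on $H^{-k}$ with a smooth map with bounded derivatives yields a $\mathcal{C}^{1,1}$ function, which gives item (1) and the Lipschitz part.

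For item (2), I would simply apply the chain rule: since $\mathcal{V}^{\epsilon}(t,\cdot)$ is Fréchet differentiable at $\mu^N_{\bx,\ba}$ with derivative $D_{-k}\mathcal{V}^{\epsilon}(t,\mu^N_{\bx,\ba})\in H^k(\T^d)$, and $H^k(\T^d)$ embeds in $\mathcal{C}^2(\T^d)$, we have
$$ \nabla_{x^i} v^{\epsilon,N}(t,\bx,\ba)=D_{-k}\mathcal{V}^{\epsilon}(t,\mu^N_{\bx,\ba})\bigl(\partial_{x^i}\mu^N_{\bx,\ba}\bigr)=\tfrac{1}{n^i[\ba]}\nabla_x\bigl(D_{-k}\mathcal{V}^{\epsilon}(t,\mu^N_{\bx,\ba})\bigr)(x^i),$$
using $\partial_{x^i}\mu^N_{\bx,\ba}=-\tfrac{1}{n^i[\ba]}\nabla\delta_{x^i}$ and the action of a distribution on a test function; likewise for $\partial_{a^i}$ one computes $\partial_{a^i}(1/n^j[\ba])$, which produces the term $1/n^i[\ba]$ times $(\delta_{x^i}-\mu^N_{\bx,\ba})$ after the Gibbs-weight algebra --- the ``tedious computation'' already recorded in Section~\ref{sec:formalargumentprojection} --- and pairing against $D_{-k}\mathcal{V}^{\epsilon}$ gives \eqref{eq:derivativeinavepsN19/04}. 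I would then double-check that the formula for $\partial_{a^i} v^{\epsilon,N}$ is consistent with \eqref{eq:linkderivatives16/04} relating $D_{-k}$ and $\delta/\delta\mu$ (the ambient-mean subtraction matches).

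For item (3), the point is that $\mu\mapsto D_{-k}\mathcal{V}^{\epsilon}(t,\mu)$ is $(1/\epsilon)$-Lipschitz from $H^{-k}(\T^d)$ to $H^k(\T^d)$, hence (by the $H^k\hookrightarrow\mathcal{C}^2$ embedding) $\mu\mapsto\nabla_x D_{-k}\mathcal{V}^{\epsilon}(t,\mu)$ is Lipschitz from $H^{-k}$ to $\mathcal{C}^1(\T^d)$; combined with the Lipschitz dependence of $\mu^N_{\bx,\ba}$ on $\bx$ in $H^{-k}$ and with the Lipschitz (in $x$) regularity of $\nabla_x D_{-k}\mathcal{V}^{\epsilon}(t,\mu)(x)$ itself (again from $H^k\hookrightarrow\mathcal{C}^2$), formula \eqref{eq:derivativeinxvepsN19/04} shows $\bx\mapsto\nabla_{x^i}v^{\epsilon,N}(t,\bx,\ba)$ is Lipschitz, uniformly in $(t,\ba)$. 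Finally, item (4) follows from Rademacher's theorem applied to the locally (indeed globally, on bounded sets, but here the relevant bounds are global because $1/n^i[\ba]\le 1$) Lipschitz function $v^{\epsilon,N}$ and to the Lipschitz function $\nabla_{x^i}v^{\epsilon,N}$ in the $\bx$ variable: the pointwise derivatives exist a.e., are the weak derivatives (a standard fact for Lipschitz functions), and the bounds on them come from the explicit formulas \eqref{eq:derivativeinxvepsN19/04}--\eqref{eq:derivativeinavepsN19/04} together with $\|D_{-k}\mathcal{V}^{\epsilon}(t,\mu)\|_{H^k}\le 2C_L$, $[\mathcal{V}^{\epsilon}(t,\cdot)]_{\mathcal{C}^{1,1}(H^{-k})}\le 1/\epsilon$, and $0<1/n^i[\ba]\le 1$. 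The one delicate point --- and the place I would be most careful --- is $\partial_t v^{\epsilon,N}$: one must invoke the time-Lipschitz regularity of $\mathcal{V}$ (Proposition~\ref{prop:timeregV}), which passes to $\mathcal{V}^{\epsilon}$ via the sup-convolution (the $\epsilon$-penalty being time-independent, the modulus in $t$ is inherited), to get that $v^{\epsilon,N}$ is Lipschitz in $t$ with a bound independent of $N$, so that $\partial_t v^{\epsilon,N}$ exists a.e.\ and is bounded. I expect this $t$-regularity bookkeeping to be the main (mild) obstacle, since the other items are direct consequences of the chain rule and Rademacher's theorem; everything else is a matter of writing out the composition carefully.
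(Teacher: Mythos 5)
Your proposal is correct and follows essentially the same route as the paper's (much terser) proof: chain rule through the smooth map $(\bx,\ba)\mapsto\mu^N_{\bx,\ba}\in H^{-k}(\T^d)$ combined with the $\mathcal{C}^{1}$/$\mathcal{C}^{1,1}$ regularity of $\mathcal{V}^{\epsilon}$ from Proposition \ref{prop:supconvproperties}, time-Lipschitz continuity inherited from Proposition \ref{prop:timeregV} via the sup-convolution, and Rademacher's theorem for point (4). The only cosmetic difference is in point (3), where the paper invokes Lemma \ref{lem:C11controlsD2mm-19/04} (a $d_1$-based bound) while you use the $H^{-k}\to H^k$ Lipschitzness of $D_{-k}\mathcal{V}^{\epsilon}$ plus Sobolev embedding directly; these are the same ingredient packaged differently.
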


\begin{proof}
    Point $\textit{(1)}$ is a consequence of the regularity of $\mathcal{V}^{\epsilon}$ given by Propositions \ref{prop:lipandSCV16/004} and \ref{prop:supconvproperties}. Notice that $\mathcal{V}$ is uniformly Lipschitz in time by Proposition \ref{prop:timeregV}. This is inherited by $\mathcal{V}^{\epsilon}$ and therefore by $v^{\epsilon,N}$. Combining Proposition \ref{prop:lipandSCV16/004} and Proposition \ref{prop:supconvproperties} we also know that $\mu \mapsto \mathcal{V}^{\epsilon}(t,\mu)$ is $\mathcal{C}^1$ in the $H^{-k}$ sense for all $t \in [0,T]$. By the chain rule we deduce that $(\bx,\ba) \mapsto v^{\epsilon,N}(t,\bx,\ba)$ is $\mathcal{C}^1$ over $(\T^d)^N \times \R^N$ for all $t \in [0,T]$ and the derivatives are given by \eqref{eq:derivativeinxvepsN19/04} and \eqref{eq:derivativeinavepsN19/04}. This proves Point $\textit{(2)}$ in the Lemma. Point $\textit{(3)}$ is a direct consequence of Lemma \ref{lem:C11controlsD2mm-19/04} and the $\mathcal{C}^{1,1}$ regularity of $\mathcal{V}^{\epsilon}$ given by Proposition \ref{prop:supconvproperties}.  Point $\textit{(4)}$ is then a consequence of points $\textit{(1)}$ and $\textit{(3)}$ together with Rademacher's theorem.
\end{proof}
Next we need a technical Lemma that allows us to bypass the lack of second order differentiability of $\mathcal{V}^{\epsilon}$ with respect to the measure variable. The proof is identical to the one of \cite[Lemma 5.14]{ddj2023} and so it is omitted.

\begin{lem}
    For almost all $(t,\bx,\ba) \in [0,T] \times (\T^d)^N \times \R^N$, for all $1 \leq i \leq N$, the function
\begin{equation} 
\T^d \ni x \mapsto \nabla_x \bigl( D_{-k} \mathcal{V}^{\epsilon}(t, \sum_{j=1, j \neq i}^N \frac{1}{n^{j}[\ba]} \delta_{x^{j}} + \frac{1}{n^{i}[\ba]} \delta_x) \bigr) (x^{i})  = \nabla_x  \frac{\delta \mathcal{V}^{\epsilon}}{\delta \mu} (t, \sum_{j=1, j \neq i}^N \frac{1}{n^{j}[\ba]} \delta_{x^{j}} + \frac{1}{n^{i}[\ba]} \delta_x, x^{i}) 
\label{eq:thefunction19/04}
\end{equation}
is differentiable at $x^{i}$. At any point $(t,\bx, \ba)$ such that \eqref{eq:thefunction19/04} is differentiable at $x^{i}$, the second derivative $\nabla^2_{x^{i}x^{i}} v^{\epsilon,N}$ exists (in the classical sense) and is given by 
$$\nabla^2_{x^{i}x^{i}} v^{\epsilon,N} (t, \bx, \ba)= \frac{1}{n^{i}[\ba]} \nabla^2_{x} \frac{\delta \mathcal{V}^{\epsilon}}{\delta \mu}(t,\mu^N_{\bx,\ba},x^{i}) + \bigl( \frac{1}{n^{i}[\ba]} \bigr)^2 R^{N,i}(t,\bx,\ba)  $$
where $R^{N,i}(t,\bx,\ba)$ is explicitly given by  
$$ R^{N,i}(t,\bx,\ba) = n^{i}[\ba] \nabla_x \bigl[ \T^d \ni x \mapsto  \nabla_x  \frac{\delta \mathcal{V}^{\epsilon}}{\delta \mu} (t, \sum_{j=1, j \neq i}^N \frac{1}{n^{j}[\ba]} \delta_{x^{j}} + \frac{1}{n^{i}[\ba]} \delta_x)(x^{i}) \bigr)   \bigr] $$
and satisfies $\norm{ R^{N,i}}_{L^{\infty}} \leq C \sup_{t \in [0,T]} [ \mathcal{V}^{\epsilon}]_{\mathcal{C}^{1,1}(H^{-k})}. $
In particular, by Proposition \ref{prop:supconvproperties}, $\norm{ R^{N,i}}_{L^{\infty}} \leq C/\epsilon$ for some $C>0$ independent from $N$.
\end{lem}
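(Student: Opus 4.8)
The plan is to differentiate in $x^{i}$ the first-order identity obtained in Lemma~\ref{lem:regvepsN19/04},
\[
\nabla_{x^{i}} v^{\epsilon,N}(t,\bx,\ba)=\frac{1}{n^{i}[\ba]}\,\nabla_x\bigl(D_{-k}\mathcal{V}^{\epsilon}(t,\mu^N_{\bx,\ba})\bigr)(x^{i}),
\]
while carefully tracking the two distinct ways in which $x^{i}$ enters the right-hand side: as the point at which the gradient is evaluated, and through the atom $\frac{1}{n^{i}[\ba]}\delta_{x^{i}}$ sitting inside the weighted empirical measure $\mu^N_{\bx,\ba}$. To this end I would fix $i$, write $\mu^{-i}:=\sum_{j\neq i}\frac{1}{n^{j}[\ba]}\delta_{x^{j}}$ so that $\mu^N_{\bx,\ba}=\mu^{-i}+\frac{1}{n^{i}[\ba]}\delta_{x^{i}}$, and introduce the two–variable map
\[
G(y,z):=\nabla_y\frac{\delta\mathcal{V}^{\epsilon}}{\delta\mu}\Bigl(t,\mu^{-i}+\tfrac{1}{n^{i}[\ba]}\delta_z,\,y\Bigr),
\]
which coincides with $\nabla_y\bigl(D_{-k}\mathcal{V}^{\epsilon}(t,\mu^{-i}+\tfrac{1}{n^{i}[\ba]}\delta_z)\bigr)(y)$ because the two derivatives differ by a $y$-independent constant. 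With this notation $n^{i}[\ba]\,\nabla_{x^{i}}v^{\epsilon,N}(t,\bx,\ba)=G(x^{i},x^{i})$, and $z\mapsto F_i(z):=G(x^{i},z)$ is exactly the function displayed in~\eqref{eq:thefunction19/04}, so it suffices to differentiate the ``diagonal'' map $z\mapsto G(z,z)$ at $z=x^{i}$.

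Next I would establish the regularity of $G$ separately in each of its two slots. In the first slot: since $k>d/2+2$, the Sobolev embedding $H^k(\T^d)\hookrightarrow\mathcal{C}^{2,\gamma}(\T^d)$ holds for some $\gamma>0$; by Proposition~\ref{prop:supconvproperties} the family $\{D_{-k}\mathcal{V}^{\epsilon}(t,\mu):\mu\in\mathcal{P}(\T^d)\}$ is bounded in $H^k(\T^d)$ uniformly in $t$, and $\mu\mapsto D_{-k}\mathcal{V}^{\epsilon}(t,\mu)$ is Lipschitz from $H^{-k}(\T^d)$ to $H^k(\T^d)$ with Lipschitz constant controlled by~\eqref{c11bound}; combining these two facts yields joint continuity of $(\mu,y)\mapsto\nabla^2_y\frac{\delta\mathcal{V}^{\epsilon}}{\delta\mu}(t,\mu,y)$, so in particular $\nabla_yG$ exists and is jointly continuous on $\T^d\times\T^d$. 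In the second slot: applying Lemma~\ref{lem:C11controlsD2mm-19/04} to $\Phi=\mathcal{V}^{\epsilon}(t,\cdot)$ with the measures $\mu^{-i}+\frac{1}{n^{i}[\ba]}\delta_{z_1}$ and $\mu^{-i}+\frac{1}{n^{i}[\ba]}\delta_{z_2}$, whose $d_1$-distance is at most $\frac{1}{n^{i}[\ba]}|z_1-z_2|$, gives
\[
|F_i(z_1)-F_i(z_2)|\le\frac{C_{k,d}}{n^{i}[\ba]}\,\bigl[\mathcal{V}^{\epsilon}(t,\cdot)\bigr]_{\mathcal{C}^{1,1}(H^{-k})}\,|z_1-z_2|,
\]
so $F_i$ is Lipschitz and hence, by Rademacher's theorem, differentiable at a.e.\ $z\in\T^d$. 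A Fubini argument in the parameters $(t,(x^{j})_{j\neq i},\ba)$ then upgrades this to: for a.e.\ $(t,\bx,\ba)$ the map $F_i$ is differentiable at the specific point $z=x^{i}$. Intersecting over $i=1,\dots,N$ and with the full-measure set produced in Lemma~\ref{lem:regvepsN19/04} gives a single set of full measure on which the conclusion will hold.

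On that set I would conclude by the elementary diagonal chain rule. Splitting
\[
G(x^{i}+\xi,x^{i}+\xi)-G(x^{i},x^{i})=\bigl[G(x^{i}+\xi,x^{i}+\xi)-G(x^{i},x^{i}+\xi)\bigr]+\bigl[G(x^{i},x^{i}+\xi)-G(x^{i},x^{i})\bigr],
\]
the first bracket equals $\int_0^1\nabla_yG(x^{i}+s\xi,x^{i}+\xi)\cdot\xi\,ds=\nabla_yG(x^{i},x^{i})\cdot\xi+o(|\xi|)$ by the joint continuity of $\nabla_yG$, while the second equals $\nabla F_i(x^{i})\cdot\xi+o(|\xi|)$ by differentiability of $F_i$ at $x^{i}$; hence $z\mapsto G(z,z)$ is differentiable at $x^{i}$ with derivative $\nabla_yG(x^{i},x^{i})+\nabla F_i(x^{i})=\nabla^2_x\frac{\delta\mathcal{V}^{\epsilon}}{\delta\mu}(t,\mu^N_{\bx,\ba},x^{i})+\nabla F_i(x^{i})$. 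Dividing through by $n^{i}[\ba]$ produces the announced formula for $\nabla^2_{x^{i}x^{i}}v^{\epsilon,N}$ with $R^{N,i}(t,\bx,\ba)=n^{i}[\ba]\,\nabla F_i(x^{i})$, and the Lipschitz bound for $F_i$ displayed above gives at once
\[
\norm{R^{N,i}}_{L^{\infty}}\le C_{k,d}\sup_{t\in[0,T]}\bigl[\mathcal{V}^{\epsilon}(t,\cdot)\bigr]_{\mathcal{C}^{1,1}(H^{-k})}\le \frac{C_{k,d}}{\epsilon},
\]
the $n^{i}[\ba]$ factors cancelling so that the constant is independent of $N$, and the last inequality being~\eqref{c11bound}. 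The one genuinely delicate step is the measure–theoretic bookkeeping in the previous paragraph — turning ``$F_i$ differentiable a.e.\ in $z$'' into ``$F_i$ differentiable at $z=x^{i}$ for a.e.\ $(t,\bx,\ba)$'', coherently with the a.e.\ second-order differentiability of $v^{\epsilon,N}$ from Lemma~\ref{lem:regvepsN19/04} — and this is precisely where the argument runs parallel to \cite[Lemma~5.14]{ddj2023}.
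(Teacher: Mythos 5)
Your argument is correct and is essentially the proof the paper has in mind: it reproduces the strategy of \cite[Lemma 5.14]{ddj2023} (to which the paper defers), namely splitting the dependence on $x^{i}$ into the evaluation point and the atom in the measure, using the Sobolev embedding for the first slot and Lemma \ref{lem:C11controlsD2mm-19/04} plus the bound \eqref{c11bound} for the Lipschitz estimate in the second slot, then Rademacher--Fubini and the diagonal chain rule. The resulting formula and the bound $\norm{R^{N,i}}_{L^{\infty}}\leq C/\epsilon$ match the statement, so no changes are needed.
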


Following \cite[Proposition 5.10]{ddj2023}  we then deduce that $v^{\epsilon,N}$ is a sub-solution to the equation satisfied by $v^N$ up to an error term of order $\frac{C}{\epsilon}  \sum_{i=1}^N \bigl(\frac{1}{n^{i}[\ba]} \bigr)^2.$

\begin{lem}
\label{lem:subsolvepsN19/04}
There is some $C>0$ independent from $N$ and $\epsilon$ such that, for almost every $(t,\bx,\ba) \in [0,T] \times (\T^d)^N \times \R^N$ it holds
\begin{align*}
 \displaystyle    -\partial_t v^{\epsilon,N}(t,\bx,\ba) -  \frac{1}{2}\sum_{i=1}^N &\Delta_{x^{i}} v^{\epsilon,N}(t,\bx,\ba) - \sum_{i=1}^N V(x^{i}) \partial_{a^{i}} v^{\epsilon,N}(t,\bx,\ba) \\
 &+  \sum_{i=1}^N  \frac{n^{i}[\ba]}{2} |\nabla_{x^{i}} v^{\epsilon,N}(t,\bx,\ba)|^2  \leq \frac{C}{\epsilon}  \sum_{i=1}^N \bigl(\frac{1}{n^{i}[\ba]} \bigr)^2.  
\end{align*}
\end{lem}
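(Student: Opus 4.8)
The plan is to compute the finite-dimensional HJB-type operator applied to $v^{\epsilon,N}$ directly from the formulas for its derivatives established in Lemma~\ref{lem:regvepsN19/04} and the technical lemma just above, and then match it term-by-term against the sub-solution inequality that $\mathcal{V}^{\epsilon}$ satisfies on $[0,T]\times\mathcal{P}(\T^d)$ (from the Proposition in Section~\ref{sec:supconvolandproperties16/04}), evaluated at the weighted empirical measure $\mu^N_{\bx,\ba}$. First I would recall that, by points $(2)$ of Lemma~\ref{lem:regvepsN19/04}, $\partial_t v^{\epsilon,N}=\partial_t\mathcal{V}^{\epsilon}(t,\mu^N_{\bx,\ba})$, $\nabla_{x^i}v^{\epsilon,N}=\frac{1}{n^i[\ba]}\nabla_x\frac{\delta\mathcal{V}^{\epsilon}}{\delta\mu}(t,\mu^N_{\bx,\ba},x^i)$ and $\partial_{a^i}v^{\epsilon,N}=-\frac{1}{n^i[\ba]}\bigl(\frac{\delta\mathcal{V}^{\epsilon}}{\delta\mu}(t,\mu^N_{\bx,\ba},x^i)-\int\frac{\delta\mathcal{V}^{\epsilon}}{\delta\mu}(t,\mu^N_{\bx,\ba},x)d\mu^N_{\bx,\ba}(x)\bigr)$; then I would plug in the Laplacian formula from the technical lemma, namely $\Delta_{x^i}v^{\epsilon,N}=\frac{1}{n^i[\ba]}\Delta_x\frac{\delta\mathcal{V}^{\epsilon}}{\delta\mu}(t,\mu^N_{\bx,\ba},x^i)+\bigl(\frac{1}{n^i[\ba]}\bigr)^2\tr R^{N,i}(t,\bx,\ba)$, valid almost everywhere.

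Next I would substitute these into the left-hand side of the claimed inequality. The key algebraic observation is that summing over $i$ produces exactly the integrals against $\mu^N_{\bx,\ba}$ appearing in the HJB operator for $\mathcal{V}$: $\sum_i\frac{1}{n^i[\ba]}\Delta_x\frac{\delta\mathcal{V}^{\epsilon}}{\delta\mu}(\cdot,x^i)=\int_{\T^d}\Delta_x\frac{\delta\mathcal{V}^{\epsilon}}{\delta\mu}(t,\mu^N_{\bx,\ba},x)d\mu^N_{\bx,\ba}(x)$, $\sum_i\frac{n^i[\ba]}{2}|\nabla_{x^i}v^{\epsilon,N}|^2=\frac12\int_{\T^d}|\nabla_x\frac{\delta\mathcal{V}^{\epsilon}}{\delta\mu}(t,\mu^N_{\bx,\ba},x)|^2d\mu^N_{\bx,\ba}(x)$, and $-\sum_i V(x^i)\partial_{a^i}v^{\epsilon,N}=\int_{\T^d}(V(x)-\langle V;\mu^N_{\bx,\ba}\rangle)\frac{\delta\mathcal{V}^{\epsilon}}{\delta\mu}(t,\mu^N_{\bx,\ba},x)d\mu^N_{\bx,\ba}(x)$ — the last identity being the reason the non-local term $\langle V;\mu\rangle$ arises naturally, since $\sum_i\frac{1}{n^i[\ba]}=1$. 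After this bookkeeping the left-hand side equals the HJB operator for $\mathcal{V}^{\epsilon}$ evaluated at $(t,\mu^N_{\bx,\ba})$, plus the leftover remainder $-\frac12\sum_i\bigl(\frac{1}{n^i[\ba]}\bigr)^2\tr R^{N,i}(t,\bx,\ba)$ coming from the second-order piece.

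Then I would invoke the sub-solution property of $\mathcal{V}^{\epsilon}$ proved earlier: since $v^{\epsilon,N}$ is differentiable in $(\bx,\ba)$ and the needed derivatives exist a.e., one can use a smooth test function touching $\mathcal{V}^{\epsilon}$ from above at $(t,\mu^N_{\bx,\ba})$ — or equivalently apply the a.e.\ pointwise version of the sub-solution inequality — to conclude that the HJB operator for $\mathcal{V}^{\epsilon}$ at $(t,\mu^N_{\bx,\ba})$ is $\leq C\epsilon$. Combined with the bound $\|R^{N,i}\|_{L^\infty}\leq C/\epsilon$ from the technical lemma, the leftover term is bounded by $\frac{C}{\epsilon}\sum_i\bigl(\frac{1}{n^i[\ba]}\bigr)^2$, and since $\epsilon\leq 1$ the $C\epsilon$ contribution is absorbed into the same expression (using $\sum_i\bigl(\frac{1}{n^i[\ba]}\bigr)^2\geq 1/N$ is not needed; one simply keeps both error terms and notes $C\epsilon\leq \frac{C}{\epsilon}\sum_i(\cdot)^2$ is false in general, so instead one states the bound as $C\epsilon+\frac{C}{\epsilon}\sum_i(\cdot)^2$ and later optimizes $\epsilon$ — but for this lemma it suffices to record the $\frac{C}{\epsilon}\sum_i(\cdot)^2$ term and absorb $C\epsilon$ separately when the final optimization over $\epsilon$ is performed in the proof of Proposition~\ref{prop:hardineq19/04}). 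The main obstacle is the rigorous justification that the a.e.\ pointwise computation of derivatives of $v^{\epsilon,N}$ legitimately feeds into the viscosity sub-solution inequality for $\mathcal{V}^{\epsilon}$ at the (degenerate, atomic) measure $\mu^N_{\bx,\ba}$; this is handled exactly as in \cite[Proposition 5.10]{ddj2023}, using that $\mathcal{V}^{\epsilon}(t,\cdot)\in\mathcal{C}^{1,1}(H^{-k}(\T^d))$ so that the projected function is genuinely $\mathcal{C}^1$ in $(\bx,\ba)$ with Lipschitz spatial gradient, and Alexandrov/Rademacher-type second-order differentiability a.e.\ closes the argument.
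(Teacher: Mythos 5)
Your route is the one the paper itself intends: the paper gives no proof of this lemma, deferring to \cite[Proposition 5.10]{ddj2023}, and your reconstruction — plug the derivative formulas of Lemma~\ref{lem:regvepsN19/04} and the a.e.\ second-order expansion with remainder $R^{N,i}$ into the finite-dimensional operator, recognize the projected HJB operator of $\mathcal{V}^{\epsilon}$ at $\mu^N_{\bx,\ba}$ (your three summation identities are correct, including the appearance of $\langle V;\mu^N_{\bx,\ba}\rangle$ from $\sum_i 1/n^i[\ba]=1$), and bound the leftover second-order term by $\frac{C}{\epsilon}\sum_i(1/n^i[\ba])^2$ via $\norm{R^{N,i}}_{L^\infty}\le C/\epsilon$ — is exactly that scheme.

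Two points need attention. First, as you yourself concede, routing through the approximate sub-solution property of $\mathcal{V}^{\epsilon}$ (whose $C\epsilon$ error is intrinsic, coming from transferring the inequality from the optimizer $\mu^{\epsilon}$ back to the base point) produces the right-hand side $C\epsilon+\frac{C}{\epsilon}\sum_i(1/n^i[\ba])^2$, which is weaker than the printed statement for arbitrary $\epsilon$; your plan to ``absorb $C\epsilon$ later'' amounts to amending the lemma rather than proving it. You should state the inequality you actually obtain; it is harmless downstream, since the proof of Proposition~\ref{prop:hardineq19/04} already carries a $C\epsilon$ from Point \textit{(1)} of Proposition~\ref{prop:supconvproperties} and the final choice $\epsilon=\bigl(\sum_i(1/n^i[\ba])^2\bigr)^{1/2}\wedge(2C_S)^{-1}$ makes $C\epsilon\le\frac{C}{\epsilon}\sum_i(1/n^i[\ba])^2$, but the stated bound should not be claimed for all $\epsilon$ without that restriction. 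Second, the phrase ``or equivalently apply the a.e.\ pointwise version of the sub-solution inequality'' glosses over the actual content of the cited result: $\mathcal{V}^{\epsilon}(t,\cdot)$ is only $\mathcal{C}^{1,1}$ on the infinite-dimensional space $H^{-k}(\T^d)$, where no Alexandrov/Jensen-type lemma is available, so a pointwise inequality at the atomic measure does not follow ``equivalently'' from the viscosity-sense statement. The mechanism is to exploit the sup-convolution structure itself: at an a.e.\ point of differentiability one applies the sub-solution property of $\mathcal{V}$ at the optimizer $\mu^{\epsilon}$ with the explicit quadratic test function $\mu\mapsto\frac{1}{2\epsilon}\norm{\mu-\mu^N_{\bx,\ba}}^2_{H^{-k}(\T^d)}$ (plus a time penalization to handle the mere Lipschitz regularity in $t$), and then transfers back to $\mu^N_{\bx,\ba}$ — this is exactly where the $C\epsilon$ and $R^{N,i}$ errors are generated. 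Since the paper defers this step to \cite{ddj2023}, deferring is acceptable, but it should be identified as the crux rather than presented as an equivalence.
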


We can finally prove Proposition \ref{prop:hardineq19/04}. 

\begin{proof}[Proof of Proposition \ref{prop:hardineq19/04}.]
    Take $(t_0,\bx_0,\ba_0) \in [0,T] \times (\T^d)^N \times (\R)^N$ and let $(\alpha^1_t,\dots,\alpha^N_t)_{t \in [t_0,T]}$ be an admissible control for $ (t_0,\bx_0, \ba_0)$ in the $N$-particle problem defining $v^N$, and let $(X^1_t,\dots,X_t^N)_{ t \in [t_0,T]}$ be the be the processes defined by:
$$ 
dX_t^{i} = \alpha_t^{i} dt + dB_t^{i}, \quad t \geq t_0, \quad X_{t_0}^{i} = x_0^{i}, \quad 1 \leq i \leq N.
$$
We then let $(A_t^{1}, \dots, A_t^N)_{t \in [t_0,T]}$ be the solutions to 
$$ dA_t^{i} = V(X_t^{i}) dt, \quad t \geq t_0, \quad A_{t_0}^{i} = a_0^{i}, \quad 1 \leq i \leq N$$
and introduce the variables 
$$ N_t^{i} = e^{A_t^{i}} \sum_{j=1}^N e^{-A_t^{j}}, \quad t \geq t_0, \quad 1 \leq i \leq N. $$
The function $v^{\epsilon,N}$ has weak derivatives $\partial_t v^{\epsilon,N}$, $\partial_{a^{i}} v^{\epsilon,N}$, $\nabla_{x^{i}} v^{\epsilon,N}, \nabla^2_{x^{i}x^{j}} v^{\epsilon,N} $ in $L^{\infty}([t_0,T] \times (\T^d)^N \times (\R)^N)$ by Lemma \ref{lem:regvepsN19/04}. Then we can apply Itô-Krylov formula, \cite[Section 2.10]{Krylov1980}, to infer that for all $t \in [t_0,T]$,
\begin{align*}
     v^{\epsilon,N}(t_0, \bx_0,\ba_0) &= \E v^{\epsilon,N}(T, \bX_T,\bA_T) \\
     &- \E  \int_{t_0}^T \Bigl \{ \partial_t v^{\epsilon,N}_t + \sum_{i=1}^N V(X_t^{i}) \partial_{a^{i}} v_t^{\epsilon,N} + \sum_{i=1}^N \alpha_t^{i} \cdot \nabla_{x^{i}} v_t^{\epsilon,N} + \frac{1}{2} \sum_{i=1}^N \Delta_{x^{i }} v_t^{\epsilon,N} \Bigr\}       (\bX_t,\bA_t) dt 
\end{align*}
Using Lemma \ref{lem:subsolvepsN19/04} we get
\begin{align*}
 v^{\epsilon,N}(t_0, \bx_0,\ba_0)     &\leq \E v^{\epsilon,N}(T,\bX_T, \bA_T) - \E \int_{t_0}^T \Bigl \{ \sum_{i=1}^N \frac{N_t^{i}}{2} |\nabla_{x^{i}} v_t^{\epsilon,N} |^2 + \alpha_t^{i} \cdot \nabla_{x^{i}} v_t^{\epsilon,N} \Bigr \}(\bX_t,\bA_t) dt \\
     &+  \frac{C}{\epsilon} \E \int_{t_0}^T \sum_{i=1}^N \bigl( \frac{1}{N_t^{i}} \bigr)^2 dt. 
\end{align*}
For all $t \in [t_0,T]$ and all $1 \leq i \leq N$, almost-surely,
$$ - \Bigl \{ \sum_{i=1}^N \frac{N_t^{i}}{2} |\nabla_{x^{i}} v_t^{\epsilon,N} |^2 + \alpha_t^{i} \cdot \nabla_{x^{i}} v_t^{\epsilon,N} \Bigr \}(\bX_t,\bA_t) \leq \frac{1}{2 N_t^{i}} |\alpha_t^{i}|^2  $$
and therefore, recalling the definition $J^N$ for the expected cost, we get
\begin{align} 
\notag v^{\epsilon,N}(t_0,\bx_0, \ba_0) &\leq J^N \bigl( (t_0,\bx_0,\ba_0), (\alpha^1_t\dots, \alpha^N_t)_{t \in [t_0,T]} ) \bigr) \\
&+ \E v^{\epsilon,N}_T(\bX_T, \bA_T) - \E \sum_{i=1}^N \frac{1}{N_T^{i}} g(X_t^{i})  +  \frac{C}{\epsilon} \E \int_{t_0}^T \sum_{i=1}^N \bigl( \frac{1}{N_t^{i}} \bigr)^2 dt.
\label{eq:19/04:18:32}
\end{align}
On the one hand, by Point $\textit{(1)}$ in Proposition \ref{prop:supconvproperties} we get 
$$ \sup_{(t, \bx, \ba) \in [0,T] \times (\T^d)^N \times (\R)^N} \bigl|  v^{\epsilon,N}(t,\bx,\ba) - \mathcal{V}(t, \mu^N_{\bx,\ba} )\bigr| \leq C \epsilon.  $$
On the other hand, since $V$ is bounded, there is $C>0$ depending on $\norm{V}_{\mathcal{C}^0(\T^d)}$ and $T >0$ (but not on $(\bx,\ba,(\alpha^1, \dots, \alpha^N))$ such that, almost-surely, for all $t \in [t_0,T]$,
$$ \frac{1}{N_t^{i}} = \frac{e^{-a_0^{i} -\int_{t_0}^T V(X_t^{i})dt}}{ \sum_{j=1}^N e^{-a_0^{j} -\int_{t_0}^T V(X_t^{j})dt} } \leq C \frac{e^{-a_0^{i}}}{ \sum_{j=1}^N e^{-a_0^{j}} } = \frac{C}{n^{i}[\ba_0]}. $$
Plugging this into \eqref{eq:19/04:18:32} we get
$$ \mathcal{V}(t_0,\mu^N_{\bx_0,\ba_0}) \leq J^N \bigl( (t_0,\bx_0,\ba_0), (\alpha^1_t\dots, \alpha^N_t)_{t \in [t_0,T]} ) \bigr) + C \epsilon + \frac{C}{\epsilon} \sum_{i=1}^N \bigl( \frac{1}{n^{i}[\ba_0]} \bigr)^2. $$
Taking the infimum over the admissible controls $(\alpha_t^1, \dots, \alpha_t^N)_{t \in [t_0,T]}$ we deduce that there is  some constant $C>0$ depending on $\norm{V}_{\mathcal{C}^k(\T^d)},\norm{ g}_{\mathcal{C}^k(\T^d)}$ and $T$, such that, for all $\epsilon >0$, for all $N \geq 1$ and for all $(t_0,\bx_0,\ba_0) \in [0,T] \times (\T^d)^N \times (\R)^N$,
$$ \mathcal{V}(t_0,\mu^N_{\bx_0,\ba_0}) \leq v^N(t_0, \bx_0,\ba_0) + C \epsilon + \frac{C}{\epsilon} \sum_{i=1}^N \bigl( \frac{1}{n^{i}[\ba_0]} \bigr)^2. $$
Now, for $N \geq 1$ and $(t_0,\bx_0,\ba_0) \in [0,T] \times (\T^d)^N \times (\R)^N$, we take 
$$\epsilon = \Bigl( \sum_{i=1}^N \bigl( \frac{1}{n^{i}[\ba_0]} \bigr)^2 \Bigr)^{1/2} \wedge \frac{1}{2 C_S} $$
to obtain the desired result.
\end{proof}

\section{The “easy" inequality.}

\label{sec:easyineq}

The goal of this section is to prove the following inequality.

\begin{prop} 
\label{prop:easyineq19/04}
   Assume that $V$ and $g$ belong to $\mathcal{C}^3(\T^d)$. There is a constant $C$ depending on $T$, $\norm{V}_{\mathcal{C}^3(\T^d)}$ and $\norm{g}_{\mathcal{C}^3(\T^d)}$ such that, for every $(t, \bx, \ba) \in [0,T] \times (\T^d)^N \times \R^N$,
$$ v^N(t,\bx,\ba) \leq \mathcal{V}(t, \mu^N_{\bx,\ba}) + C  \sum_{i=1}^N \bigl( \frac{1}{n^{i}[\ba]} \bigr)^2.  $$ 
\end{prop}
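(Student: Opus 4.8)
The plan is to build an explicit admissible control for the $N$-particle problem out of an almost-optimal control for the limiting problem \eqref{def:limitvfV}, and to estimate the resulting extra cost. Fix $(t_0,\bx_0,\ba_0)$ and let $\alpha=(\alpha_t)_{t\in[t_0,T]}$, $\alpha_t:\T^d\to\R^d$, be an optimal (or $\eta$-optimal) control for $\mathcal{V}(t_0,\mu^N_{\bx_0,\ba_0})$, with associated measure flow $(\mu_t)$. By Theorem \ref{thm:OptimalityConditons} we may take $\alpha=-\nabla u$ with $u\in\mathcal{C}^{1,2}$ and, crucially, $\sup_{t}\norm{\alpha_t}_{\mathcal{C}^1(\T^d)}\leq\Lambda$ under the $\mathcal{C}^3$ assumption on $V,g$ (here $k=3$ so $k-1=2$, and in fact $k-1\geq 1$ suffices). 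Feed this feedback into the particle system: let $X^i_t$ solve $dX^i_t=\alpha_t(X^i_t)\,dt+dB^i_t$ with $X^i_{t_0}=x^i_0$, let $A^i_t=a^i_0+\int_{t_0}^tV(X^i_s)\,ds$, and set $N^i_t=n^i[\bA_t]$. This is an admissible control for $v^N$, so $v^N(t_0,\bx_0,\ba_0)\leq J^N(t_0,\bx_0,\ba_0,(\alpha_t(X^{\cdot}_t)))$.

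The heart of the matter is to compare $\E J^N$ with the limiting cost \eqref{eq:costsoftkilling28/04}. First I would establish a propagation-of-chaos-type estimate: with $\bar\mu_t:=\E[e^{-A^1_t}\delta_{X^1_t}]/\E[e^{-A^1_t}]$ the law of one tagged particle reweighted, one has $\bar\mu_t=\mu_t$ exactly (the $X^i$ are i.i.d.\ with the McKean–Vlasov law driving \eqref{eq:FPEIntro-28/04}, since the feedback $\alpha_t$ does not depend on the empirical measure). Then the weighted empirical measure $\mu^N_t=\sum_i\frac{1}{N^i_t}\delta_{X^i_t}$ concentrates on $\mu_t$; quantitatively, for a test function $\varphi\in\mathcal{C}^1$ one writes
$$
\int\varphi\,d\mu^N_t-\int\varphi\,d\mu_t=\frac{\sum_i e^{-A^i_t}\varphi(X^i_t)}{\sum_j e^{-A^j_t}}-\frac{\E[e^{-A^1_t}\varphi(X^1_t)]}{\E[e^{-A^1_t}]},
$$
and since the pairs $(X^i_t,A^i_t)$ are i.i.d., both numerator and denominator are empirical averages whose fluctuations around their means are $O(1/\sqrt N)$ in $L^2$; a ratio estimate (using that $e^{-A^i_t}\in[e^{-T\norm{V}_\infty},1]$ times the appropriate normalization, so the denominator stays bounded below after renormalizing by $\sum_j e^{-a^j_0}$) then yields $\E\big|\int\varphi\,d(\mu^N_t-\mu_t)\big|^2\leq C\norm{\varphi}_{\mathcal{C}^0}^2\sum_i(1/n^i[\ba_0])^2$. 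Note $\sum_i(1/n^i[\ba_0])^2$ plays the role of $1/N$; the bound $\frac{1}{N^i_t}\leq \frac{C}{n^i[\ba_0]}$ from \eqref{eq:constantforai} is what converts the two-scale quantities uniformly.

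Finally I would expand the difference $J^N-\big(\int_{t_0}^T\int\frac12|\alpha_t|^2\,d\mu_t\,dt+\int g\,d\mu_T\big)$. Using that $\alpha_t$ is Lipschitz in $x$, one gets terms of the form $\E\int_{t_0}^T\int\frac12|\alpha_t|^2\,d(\mu^N_t-\mu_t)\,dt$ and $\E\int g\,d(\mu^N_T-\mu_T)$; since $|\alpha_t|^2$ and $g$ are in $\mathcal{C}^1$ (here the $\mathcal{C}^3$ hypothesis gives $\alpha_t\in\mathcal{C}^2$, so $|\alpha_t|^2\in\mathcal{C}^2\subset\mathcal{C}^1$), applying the concentration estimate of the previous paragraph and Cauchy–Schwarz bounds each by $C\big(\sum_i(1/n^i[\ba_0])^2\big)^{1/2}\cdot\big(\sum_i(1/n^i[\ba_0])^2\big)^{1/2}=C\sum_i(1/n^i[\ba_0])^2$ — the point being that one also gains a factor $\big(\sum_i(1/n^i[\ba_0])^2\big)^{1/2}$ from the total mass of $|\mu^N_t-\mu_t|$ against a $\mathcal{C}^0$ function being itself $O(\sqrt{\sum_i(1/n^i)^2})$, matching the rate $\sum_i(1/n^i)^2$ rather than its square root. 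Taking the infimum of $\mathcal{V}$'s cost over controls and letting $\eta\to0$ gives the claim. The main obstacle is the ratio/self-normalization estimate for $\mu^N_t$: controlling $\mu^N_t-\mu_t$ requires handling both the fluctuations of the numerator and of the random normalizing denominator $\sum_j e^{-A^j_t}$ simultaneously, and tracking that the relevant small parameter is $\sum_i(1/n^i[\ba_0])^2$ (which equals $1/N$ only when the weights are balanced) rather than $1/N$; keeping all constants independent of $N$ and of $\ba_0$ throughout is the delicate bookkeeping.
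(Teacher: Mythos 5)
Your overall strategy — play the optimal feedback $\alpha$ of the limiting problem in the particle system, observe that $J^N$ is exactly $\E\bigl[\int_{t_0}^T\langle\frac12|\alpha_t|^2;\mu^N_t\rangle dt+\langle g;\mu^N_T\rangle\bigr]$, and compare with the deterministic limit flow — is a genuinely different, more probabilistic route than the paper's (which fixes the same control $\alpha$ but introduces the cost-to-go functional $\Phi(t,\mu)$ under that control, shows via the linear master-equation results of Buckdahn--Li--Peng--Rainer and Chassagneux--Szpruch--Tse that $\nabla^2_{xy}\frac{\delta^2\Phi}{\delta\mu^2}$ is bounded, and then runs the projection/It\^o argument of Section \ref{sec:formalargumentprojection} on $\Phi^N$). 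In principle your route can work, because under the deterministic feedback the particles decouple and the limit flow has the Feynman--Kac representation. But two points in your write-up do not hold as stated, and the second is the decisive one.

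First, the identification $\mu_t=\E[e^{-A^1_t}\delta_{X^1_t}]/\E[e^{-A^1_t}]$ via an ``i.i.d.'' argument is wrong: the particles start from the distinct deterministic points $x^i_0$ with distinct weights $a^i_0$, so they are independent but not identically distributed, and the tagged particle alone does not reproduce $\mu_t$. The correct (and fixable) statement is the mixture identity $\mu_t=\bigl(\sum_i\E[e^{-A^i_t}\delta_{X^i_t}]\bigr)/\bigl(\sum_i\E[e^{-A^i_t}]\bigr)$, which follows from the linear Feynman--Kac equation for the unnormalized measure started from $\mu^N_{\bx_0,\ba_0}$. Second, and more seriously, the key quantitative step is not justified: your $L^2$ concentration estimate only gives $\E\bigl|\langle\varphi;\mu^N_t-\mu_t\rangle\bigr|\leq C\norm{\varphi}_{\mathcal{C}^0}\bigl(\sum_i(1/n^i[\ba_0])^2\bigr)^{1/2}$, i.e.\ the square root of the claimed rate, and the ``Cauchy--Schwarz times total mass'' sentence is not a valid argument — the total variation of $\mu^N_t-\mu_t$ is of order one (an atomic measure against a diffuse one), and you cannot multiply two bounds for the same scalar quantity. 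What is actually needed is a \emph{bias} estimate for the self-normalized ratio: writing $\langle\varphi;\mu^N_t\rangle=U_N/W_N$ with $U_N=S_0^{-1}\sum_ie^{-A^i_t}\varphi(X^i_t)$, $W_N=S_0^{-1}\sum_je^{-A^j_t}$, $S_0=\sum_je^{-a^j_0}$, one has $\E U_N/\E W_N=\langle\varphi;\mu_t\rangle$ by the mixture identity, the first-order terms in the expansion of $U_N/W_N-\E U_N/\E W_N$ are centered and vanish under the expectation (here it is essential that $\varphi$, being $|\alpha_t|^2$ or $g$, is deterministic), and the remainder is quadratic in the fluctuations, so Cauchy--Schwarz together with $e^{-A^i_t}\leq e^{-a^i_0}$, $W_N\geq e^{-T\norm{V}_{\mathcal{C}^0}}$ and independence gives $\bigl|\E\langle\varphi;\mu^N_t-\mu_t\rangle\bigr|\leq C\norm{\varphi}_{\mathcal{C}^0}\sum_i(1/n^i[\ba_0])^2$. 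With that lemma in place your argument closes (it is, in effect, a hands-on substitute for the paper's bound on $\nabla^2_{xy}\frac{\delta^2\Phi}{\delta\mu^2}$, which is precisely what controls this bias in their proof); without it, the proof as written only yields the rate $\bigl(\sum_i(1/n^i[\ba_0])^2\bigr)^{1/2}$, which is weaker than the statement of the Proposition.
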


\begin{proof}
    We fix $(t_0,\bx_0,\ba_0) \in [0,T] \times (\T^d)^N \times \R^N$ and we let $\alpha : [t_0,T] \times \T^d \rightarrow \R^d$ be an optimal control for $\mathcal{V}(t_0,\mu^N_{\bx_0,\ba_0})$. For every $(t,\mu) \in [t_0,T] \times \mathcal{P}(\T^d)$ we let 
    $$ \Phi(t,\mu) := \int_t^T \int_{\T^d} \frac{1}{2} |\alpha_s(x)|^2 d\mu^{t,\mu}_s(x)ds + \int_{\T^d} g(x) d\mu_T^{t,\mu}(x)  $$
where $(\mu_s^{t,\mu})_{ s \in [t,T]}$ is the solution to 
$$ \partial_s \mu_s - \frac{1}{2} \Delta \mu_s + \div(\alpha_s \mu_s) + \bigl( V(x) - \langle V;\mu_s \rangle \bigr) \mu_s = 0, \mbox{ in } (t,T) \times \T^d, \quad \mu_t = \mu. $$ 
Arguing as in \cite{BuckdahnLiPengRainer} Theorem 7.2, see also  \cite{CST} Theorem 2.15, we can prove that  $\Phi : [t_0,T] \times \mathcal{P}(\T^d) \rightarrow \R$ is a classical solution to the (linear) equation 
\begin{equation*}
    \left \{
    \begin{array}{ll}
\displaystyle    -\partial_t \Phi(t,\mu)- \frac{1}{2}\int_{\T^d} \Delta_x \frac{\delta \Phi}{\delta \mu}(t,\mu,x) d\mu(x) - \int_{\T^d} \alpha_t(x) \cdot \nabla_x \frac{\delta \Phi}{\delta \mu}(t,\mu,x) d\mu(x) \\
 \displaystyle  \hspace{50pt} + \int_{\T^d}  (V(x) - \lg V;\mu \rg) \frac{\delta \Phi}{\delta \mu}(t,\mu,x) d\mu(x) = \frac{1}{2} \int_{\T^d} |\alpha_t(x)|^2 d\mu(x) , \quad \mbox{ in } [t_0,T] \times \mathcal{P}(\T^d), \\
\Phi(T, \mu) = \int_{\T^d} g(x) d\mu(x), \mbox{ in } \mathcal{P}(\T^d), 
    \end{array}
    \right.
\end{equation*}
and we can find $C >0$ depending only on $T$, $\norm{g}_{\mathcal{C}^3(\T^d)}, \norm{V}_{\mathcal{C}^3(\T^d)}$, $\sup_{t \in [t_0,T]} \norm{ \alpha_t}_{\mathcal{C}^3}$ and $T >0$ such that 
\begin{equation} 
\sup_{t \in [t_0,T]} \sup_{ \mu \in \mathcal{P}(\T^d)} \sup_{(x,y) \in (\T^d)^2} \bigl| \nabla^2_{x,y} \frac{\delta^2 \Phi }{ \delta \mu^2} (t,\mu,x,y) \bigr| \leq C.
\label{eq:boundlineareqn28/04}
\end{equation}
Compared to the setting of \cite{BuckdahnLiPengRainer} and \cite{CST} our state equation involves a zero-th order term but the same method applies. Since, for all $(t,\mu) \in [t_0,T] \times \mathcal{P}(\T^d)$,
$$ -\int_{\T^d} \alpha_t(x) \cdot \nabla_x \frac{\delta \Phi}{\delta \mu}(t,\mu,x) - \frac{1}{2} \int_{\T^d} |\alpha_t(x)|^2 d\mu_t(x) \leq \frac{1}{2} \int_{\T^d} \bigl| \nabla_x \frac{\delta\Phi}{\delta \mu}(t,\mu,x) \bigr|^2 d\mu(x), $$
$\Phi$ is a classical super-solution, over $[t_0,T] \times \mathcal{P}(\T^d)$, of the Hamilton-Jacobi equation \eqref{eq:MasterHJB16/04}. We can apply the projection argument of Section \ref{sec:formalargumentprojection} to $\Phi^N : [t_0,T] \times (\T^d)^N \times \R^N \rightarrow \R$ defined for all $(t,\bx,\ba) \in [t_0,T] \times (\T^d)^N \times \R^N$ by
$$ \Phi^N(t, \bx,\ba) = \Phi(t, \mu^N_{\bx,\ba}),$$
and we find that $\Phi^N$ is a classical super-solution to
\begin{equation}
\left \{
\begin{array}{ll}
 \displaystyle    -\partial_t \Phi^N - \frac{1}{2}\sum_{i=1}^N \Delta_{x^{i}} \Phi^N - \sum_{i=1}^N V(x^{i}) \partial_{a^{i}} \Phi^N + \sum_{i=1}^N \frac{n^{i}[\ba]}{2} |\nabla_{x^{i}} \Phi^N|^2 \\
\displaystyle \hspace{120pt}\geq -  \sum_{i=1}^N \bigl( \frac{1}{n^{i}[\ba]} \bigr)^2 \tr \nabla^2_{xy} \frac{\delta^2 \Phi}{\delta \mu^2}(t, \mu^N_{\bx,\ba},x^{i},x^{i})   , \quad \mbox{ in } [t_0,T]\times (\T^d)^N \times (\R)^N \\ 
\displaystyle    \Phi^N(T, \bx, \ba) = \sum_{i=1}^N \frac{1}{n^{i}[\ba]} g(x^{i}), \quad \mbox{ in } (\T^d)^N \times (\R)^N. 
\end{array}
\right.
\end{equation}
Arguing similarly as in the proof of Proposition \ref{prop:hardineq19/04}, see also Section \ref{sec:formalargumentprojection}, we conclude that there is a constant $C>0$ depending on $T$, $\norm{V}_{\mathcal{C}^0}$ and the constant appearing in  \eqref{eq:boundlineareqn28/04} such that, for all $(t,\bx,\ba) \in [t_0,T] \times (\T^d)^N \times \R^N$, 
$$ v^n(t,\bx,\ba) \leq \Phi(t, \mu^N_{\bx,\ba}) +  C \sum_{i=1}^N \bigl( \frac{1}{n^{i}[\ba]} \bigr)^2. $$
In particular, recalling that $\Phi(t_0, \mu^N_{\bx_0,\ba_0}) =  \mathcal{V}(t_0, \mu^N_{\bx_0,\ba_0}) $ because $\alpha$ is an optimal control for $\mathcal{V}(t_0,\mu^N_{\bx_0,\ba_0}) $ we get 
$$ v^n(t_0,\bx_0,\ba_0) \leq \mathcal{V}(t_0, \mu^N_{\bx_0,\ba_0}) +  C \sum_{i=1}^N \bigl( \frac{1}{n^{i}[\ba_0]} \bigr)^2. $$
This concludes the proof of the Proposition.
\end{proof}

\section{Existence of optimal controls and optimality conditions}

\label{sec:FOC}

The goal of this section is to prove Theorem \ref{thm:OptimalityConditons}.  We need to extend some results of \cite{CLL23}, especially concerning the existence of smooth feedback optimal controls. Our main innovation compared to \cite{CLL23} is to prove that bounded optimal controls for \eqref{def:limitvfV} exist, and any of them satisfies the optimality conditions. This is an improvement over \cite{CLL23} where the optimality conditions are obtained under the condition that the optimal control is bounded. This improvement is necessary to understand the regularity of the value function in Section \ref{sec:AnalysisVF}.

We need a preliminary well-posedness result for the backward Hamilton-Jacobi equation. For all $R>0$ and $p \in \R^d$ we denote 
\begin{equation}
H^R(p) := \sup_{|a| \leq R}  \bigl \{ -a\cdot p - \frac{1}{2}|a|^2 \bigr \} = \left \{ \begin{array}{ll} \frac{1}{2} |p|^2 & \mbox{if } |p| \leq R \\ R|p| - \frac{1}{2}|R|^2, & \mbox{if } |p| \geq R, \end{array} \right.
\label{eq:defnHR}
\end{equation}
and we notice that $H^R$ is continuously differentiable and $\nabla_pH^R$ is globally Lipschitz and bounded. For convenience we define as well $H^{\infty}(p) = \frac{1}{2}|p|^2$.

\begin{lem}
\label{lem:AprioriEstimatesHJB14/04}
    Take $\mu \in \mathcal{C}([t_0,T], \mathcal{P}(\T^d))$ and assume that $V$ and $g$ are in $\mathcal{C}^2(\T^d)$. For all $R\in \R^+ \cup \{+\infty \}$, there is a unique solution $u^R$ in $\mathcal{C}^{1,2}([t_0,T] \times \T^d)$ to the HJB equation
\begin{equation}
    -\partial_t u_t^R + H^R( \nabla u_t^R) - \frac{1}{2} \Delta u_t^R + \bigl( V-\langle V; \mu_t \rangle ) u_t^R - V \langle u_t^R;\mu_t \rangle = 0, \quad \mbox{ in } [t_0,T] \times \T^d, \quad \quad u_T^R = g \mbox{ in } \T^d.
\label{eq:HJBR}
\end{equation}
Moreover, there is a non-decreasing function $\Lambda : \R^+ \rightarrow \R^+$, independent of $(t_0,T,\mu,g,V,R)$ such that 
$$ \sup_{t \in [t_0,T]} \norm{ u^R_t}_{\mathcal{C}^1(\T^d)} \leq \Lambda \bigl( T + \norm{V}_{\mathcal{C}^1(\T^d)} + \norm{g}_{\mathcal{C}^1(\T^d)} \bigr).$$
In particular, there is $R_0>0$ depending only on $(T, \norm{V}_{\mathcal{C}^1(\T^d)}, \norm{ g}_{\mathcal{C}^1(\T^d)})$ such that, for all $R >0$, $u^R$ is $R_0$-Lipschitz continuous in space, uniformly in time and therefore, as soon as $R \geq R_0$, $u^R$ coincides with $u$, the unique classical solution to 
\begin{equation}
    -\partial_t u_t + \frac{1}{2}|\nabla u_t|^2 - \frac{1}{2} \Delta u_t + \bigl( V-\langle V; \mu_t \rangle ) u_t - V \langle u_t;\mu_t \rangle = 0, \quad \mbox{ in } [t_0,T] \times \T^d, \quad \quad u_T = g \mbox{ in } \T^d.
\label{eq:backwardHJB}
\end{equation}
\end{lem}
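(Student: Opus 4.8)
The plan is to treat \eqref{eq:HJBR} as a semilinear parabolic equation in which the nonlocal terms $\langle V;\mu_t\rangle u_t^R$ and $V\langle u_t^R;\mu_t\rangle$ are \emph{linear} zeroth-order perturbations with coefficients that are merely continuous (and bounded) in $t$; the only genuine nonlinearity is the Hamiltonian $H^R$, which by \eqref{eq:defnHR} is $\mathcal{C}^1$ with globally Lipschitz and bounded gradient. First I would establish \textbf{existence and uniqueness} of a solution $u^R\in\mathcal{C}^{1,2}([t_0,T]\times\T^d)$. Since $\mu$ is a given, fixed continuous curve, the map $w\mapsto\langle w;\mu_t\rangle$ is a bounded linear functional, so \eqref{eq:HJBR} is a standard quasilinear parabolic problem on the compact manifold $\T^d$ with smooth ($\mathcal{C}^2$) data $V,g$; existence of a classical solution follows from the fixed-point/continuation argument of \cite[Lemma 8]{CLL23} applied to the linearization (freeze the nonlocal averages, solve the resulting semilinear equation, iterate), together with the a priori $\mathcal{C}^1$ bound derived below which rules out finite-time blow-up. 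Uniqueness follows from a comparison principle: if $u^R,\tilde u^R$ are two solutions, their difference $w$ solves a linear parabolic equation (the $H^R$ term contributes a bounded drift by the mean value theorem since $\nabla_p H^R$ is bounded, and the nonlocal terms contribute bounded zeroth-order coefficients), so $w\equiv 0$ by the maximum principle on $\T^d$.

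The heart of the argument is the \textbf{uniform $\mathcal{C}^1$ estimate}. For the $\mathcal{C}^0$ bound, I would use the maximum principle directly on \eqref{eq:HJBR}: at a maximum point in space the Laplacian term has the favorable sign, $H^R(\nabla u_t^R)\geq 0$, and the nonlocal terms are controlled by $\norm{V}_{\mathcal{C}^0}\sup_x|u_t^R|$, so Grönwall in the backward time variable gives $\sup_t\norm{u_t^R}_{\mathcal{C}^0}\leq\Lambda(T+\norm{V}_{\mathcal{C}^0})\norm{g}_{\mathcal{C}^0}$ — this is exactly the argument invoked for $\phi$ in the proof of Lemma \ref{lem:estimatelinbackwardeq15/04}. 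For the gradient bound I would use the Bernstein method: set $w:=\tfrac12|\nabla u_t^R|^2$, differentiate \eqref{eq:HJBR} in $x$ and contract with $\nabla u_t^R$; the good term $-\tfrac12\Delta w+|\nabla^2 u_t^R|^2$ appears, while the $H^R$ contribution gives $\nabla_p H^R(\nabla u_t^R)\cdot\nabla w$, a bounded-coefficient transport term harmless for the maximum principle; the dangerous terms come from differentiating the nonlocal zeroth-order part, producing $\nabla(V u_t^R)\cdot\nabla u_t^R$ and $\langle u_t^R;\mu_t\rangle\nabla V\cdot\nabla u_t^R$, both bounded by $C(\norm{V}_{\mathcal{C}^1})(1+\norm{u_t^R}_{\mathcal{C}^0})(1+|\nabla u_t^R|^2)$ using the already-established $\mathcal{C}^0$ bound. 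Applying the maximum principle to $w$ (or to $e^{\lambda t}w$) and Grönwall in backward time then yields $\sup_t\norm{u_t^R}_{\mathcal{C}^1}\leq\Lambda(T+\norm{V}_{\mathcal{C}^1}+\norm{g}_{\mathcal{C}^1})$, with $\Lambda$ nondecreasing and \emph{independent of $R$} because $H^R(p)\le\tfrac12|p|^2$ and $|\nabla_p H^R(p)|\le|p|$ uniformly in $R$.

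Finally, given the uniform Lipschitz bound, I would set $R_0:=\Lambda(T+\norm{V}_{\mathcal{C}^1}+\norm{g}_{\mathcal{C}^1})$, the right-hand side of the $\mathcal{C}^1$ estimate. For any $R\geq R_0$ the solution $u^R$ satisfies $\norm{\nabla u_t^R}_{\mathcal{C}^0}\leq R_0\leq R$ for all $t$, so by definition \eqref{eq:defnHR} we have $H^R(\nabla u_t^R)=\tfrac12|\nabla u_t^R|^2$ pointwise; hence $u^R$ solves \eqref{eq:backwardHJB}. Conversely any classical solution $u$ of \eqref{eq:backwardHJB} inherits the same $\mathcal{C}^0$ and Bernstein gradient bounds (the computation is identical since $H^\infty(p)=\tfrac12|p|^2$ and $\nabla_p H^\infty(p)=p$), so $\norm{\nabla u_t}_{\mathcal{C}^0}\le R_0\le R$, which means $u$ also solves \eqref{eq:HJBR}; by the uniqueness established above, $u^R=u$. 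The main obstacle is making the Bernstein estimate fully rigorous in the presence of the nonlocal term — one must be careful that differentiating $\langle u_t^R;\mu_t\rangle$ in $x$ gives zero (it is $x$-independent) while differentiating $V\langle u_t^R;\mu_t\rangle$ only hits $V$, so the scalar factor $\langle u_t^R;\mu_t\rangle$ is controlled solely by the $\mathcal{C}^0$ bound and no new derivative of $u^R$ enters; once this bookkeeping is done the estimate is the standard Lions–Lasry Bernstein argument on the torus.
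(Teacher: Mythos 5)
Your plan is correct and follows essentially the same route as the paper: a maximum-principle $\mathcal{C}^0$ bound, a Bernstein estimate on $w=\tfrac12|\nabla u^R|^2$ that is uniform in $R$ because the $H^R$-contribution only enters through the transport term $\nabla_p H^R(\nabla u^R)\cdot\nabla w$, and then the choice $R_0$ equal to the resulting Lipschitz bound so that $H^R(\nabla u^R)=\tfrac12|\nabla u^R|^2$ and $u^R=u$ for $R\geq R_0$, with existence/uniqueness deferred to the fixed-point argument of \cite{CLL23}. The only differences are cosmetic: where you absorb the nonlocal terms by a sup-norm Gr\"onwall argument (which also fixes your slightly loose phrase ``bounded zeroth-order coefficients'' for the genuinely nonlocal term $V\langle \cdot;\mu_t\rangle$ in the uniqueness step), the paper instead removes $\langle V;\mu_t\rangle u$ by the integrating factor $e^{-\int_t^T\langle V;\mu_s\rangle ds}$ and exploits $V\geq 0$ at the maximum point for the $\mathcal{C}^0$ bound, and compares $w$ with an explicit supersolution rather than using Gr\"onwall in the Bernstein step; both variants yield bounds of the required form.
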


\begin{proof}
We only give a sketch of the proof. Existence and uniqueness for \eqref{eq:HJBR} follow from the regularity of $H^R,V$ and $g$, the non-degeneracy of the second order term, as well as a fixed point argument to take care of the non local term, see \cite[Section 3.6.1 ]{CLL23} for the fixed point argument. We focus on the $L^{\infty}$ and Lipschitz estimates to prove that they are uniform in $R$ for $R$ large enough. We start with the former. For $R>0$ and $\delta, \gamma >0$ we let, for all $(t,x) \in (0,T] \times \T^d$,  
$$v_t(x):=e^{-\int_t^T \langle V; \mu_s \rangle ds}u_t^R(x) - \delta (T-t) - \frac{\gamma}{t}
$$ 
and we observe that $v$ solves 
$$
- \partial_t v_t + e^{- \int_t^T \langle V; \mu_s \rangle ds } H^R( e^{ \int_t^T \langle V;\mu_s \rangle ds} \nabla v_t) - \frac{1}{2}\Delta v_t + V v_t - V \langle v_t;\mu_t \rangle = - \delta - \frac{\gamma}{t^2} \quad \mbox{ in } (t_0,T] \times \T^d, .
$$
with $ v_T = g - \gamma/T$. Let $( \bar{t},\bar{x}) \in (0,T] \times \T^d$ be a maximum point of $v$. If $ \bar t \neq T$, we must have $ \partial_t v_{\bar{t}} (\bar{x}) = 0$, $\nabla v_{\bar{t}}(\bar{x}) =0$ and $\Delta v_{\bar{t}} (\bar{x}) \leq 0$. We deduce from the equation that
$$ V(\bar{x}) \bigl(  v_{\bar{t}}( \bar{x})  - \langle v_{\bar{t}}; \mu_t \rangle) \leq -\delta - \gamma/t^2 <0, $$
But $V \geq 0$ and $v_{\bar{t}}(\bar{x}) \geq \langle v_{\bar{t}} ; \mu_{\bar{t}} \rangle $ since $\bar{x}$ is a maximum point for $v_{\bar{t}}$ and $\bar{\mu}_t$ is a probability measure. As a consequence, $V(\bar{x})( v_{\bar{t}}(\bar{x}) - \langle v_{\bar{t}}; \mu_{\bar{t}} \rangle ) \geq 0$ and we get a contradiction. So $\bar{t}$ must be equal to $T$ which means that, for all $(t,x) \in [0,T] \times \T^d$, $v_t(x) \leq v_T( \bar{x}) \leq \norm{g}_{\mathcal{C}^0} - \gamma/T.$ If we let $\delta, \gamma \rightarrow 0$ and recall the definition of $v$ we obtain
$$ 
u^R(t,x) \leq e^{\int_t^T \langle V; \mu_s \rangle ds} \norm{ g }_{\mathcal{C}^0} \quad  (t,x) \in [0,T] \times \T^d.
$$
Following the same reasoning, looking for minimum points of $v^{-} := e^{-\int_t^T \langle V; \mu_s \rangle}u^R + \delta (T-t) + \gamma/t$ we obtain the symmetric lower bound on $u^R$ and finally we deduce that 
\begin{equation} 
-e^{\int_t^T \langle V ; \mu_s \rangle ds } \norm{ g}_{\mathcal{C}^0} \leq u^R_t(x) \leq e^{\int_t^T \langle V ; \mu_s \rangle ds } \norm{ g}_{\mathcal{C}^0} \quad \quad (t,x) \in [0,T] \times \T^d.  
\label{eq:linftyuR}
\end{equation}
Notice in particular that the bound does not depend on $R >0$. For the Lipschitz estimate we proceed as follows. For some $\gamma \in \R$ to be determined later, we let $ w(t,x) := e^{\gamma(t-T)} \frac{1}{2} |\nabla u^R(t,x)|^2 $. Differentiating in space the HJB equation and taking inner product with $e^{\gamma(t-T)} \nabla u^R(t,x)$ leads to
\begin{align*}
-\partial_t w_t + &H_p^R( \nabla u_t^R) \cdot \nabla w_t - \frac{1}{2} \Delta w_t \\
&= -e^{\gamma (t-T)} |\nabla^2 w_t|^2 - (\gamma + 2(V-\langle V ; \mu_t \rangle) ) w_t + ( \langle u_t ;\mu_t \rangle -u_t) \nabla V \cdot \nabla u_t e^{\gamma(t-T)}.
\end{align*}
As a consequence, as soon as $\gamma \geq 4 \norm{V}_{\mathcal{C}^0(\T^d)}$, $ -(\gamma + 2(V(x)- \langle V; \mu_t \rangle )) w_t(x) \leq 0$ for all $(t,x) \in [t_0,T] \times \T^d$ and  we obtain, for some constant $C>0$ depending on $T$, $\norm{V}_{\mathcal{C}^1(\T^d)}$, and $\norm{u^R}_{\mathcal{C}^0(\T^d)}$ (and therefore independent from $R$, thanks to \eqref{eq:linftyuR}),
$$-\partial_t w_t + H_p^R(\nabla u_t^R) \cdot \nabla w_t - \frac{1}{2} \Delta w_t \leq C \sqrt{ \norm{w}_{\mathcal{C}^{0}}}. $$
By comparison with the obvious super solution
$$ (t,x) \mapsto \frac{1}{2} \norm{ \nabla g}^2_{\mathcal{C}^{0}} + C(T-t) \sqrt{ \norm{w}_{\mathcal{C}^{0}} },$$
we obtain, for all $(t,x) \in [0,T] \times \T^d,$
 $$ w(t,x) \leq \frac{1}{2} \norm{\nabla g}^2_{\mathcal{C}^{0}} + CT \sqrt{ \norm{w}_{\mathcal{C}^{0}} }.$$
 Taking the supremum in $(t,x)$ yields (recall that $w \geq 0$)
 $$ \norm{ w}_{\mathcal{C}^{0}} \leq \frac{1}{2} \norm{\nabla g}^2_{\mathcal{C}^{0}} + CT \sqrt{ \norm{w}_{\mathcal{C}^{0}} },$$
 which implies
\begin{equation}
    \label{eq:LipesstimateuR}
\norm{ w}_{\mathcal{C}^{0}} \leq  C^2T^2 + \norm{\nabla g}^2_{\mathcal{C}^{0}}
\end{equation}
and from which we deduce that $\norm{w}_{\mathcal{C}^{0}}$, and therefore $\norm{ \nabla u^R}_{\mathcal{C}^{0}}$, are bounded by a constant depending on $\norm{ \nabla g}_{\mathcal{C}^{0}}$,  $\norm{V}_{\mathcal{C}^{0}}$, $\norm{\nabla V}_{\mathcal{C}^{0}}$ and $\norm{u^R}_{\mathcal{C}^{0}}$. In particular, $u^R$ and $\nabla u^R$ are bounded independently of  $R$. Take $R_0$ such that, for all $R >0$, $\norm{\nabla u^R}_{\mathcal{C}^{0}} \leq R_0$. Then, recalling the definition of $H^R$ in \eqref{eq:defnHR}, we get 
 $$  H^R (\nabla u^R) = \frac{1}{2} |\nabla u^R|^2, \quad \forall R \geq R_0.$$
 This means that, for all $R \geq R_0$, $u^R$ solves \eqref{eq:backwardHJB} and therefore $u^R = u$ the unique solution to \eqref{eq:backwardHJB}.
\end{proof}

We go on with the optimality conditions. First we fix $R>0$ and we investigate the control problem 
\begin{equation} 
\inf_{(\mu,\alpha)} \int_{t_0}^T \int_{\T^d} \frac{1}{2} |\alpha_t(x)|^2 d\mu_t(x)dt + \int_{\T^d} g(x)d \mu_T(x),
\label{PbR}
\tag{Pb(R)}
\end{equation}
where $\mu$ belongs to $\mathcal{C}([t_0,T], \mathcal{P}(\T^d))$ and $ \alpha $ to $ L^{\infty}([t_0,T] \times \T^d, \mu_t \otimes dt; \R^d)$, satisfying together the Fokker-Planck equation  
\begin{equation} 
\partial_t \mu - \frac{1}{2} \Delta\mu +\div(\alpha \mu) + (V - \langle V, \mu \rangle)\mu = 0, \mbox{ in } (t_0,T) \times \T^d, \quad \mu(t_0) = \mu_0,
\label{eq:conditionalFPE}
\end{equation}
and the constraint 
\begin{equation}
\mu_t \otimes dt  \{ (t,x) \in [t_0,T] \times \T^d , \quad |\alpha_t(x) | > R  \} = 0. 
\label{eq:Linftybound14/04}
\end{equation}

We recall from \cite{CLL23}, that $\mu$ has the following probabilistic representation
\begin{equation}
\mu_t(dx) = \E \bigl[ \delta_{X_t} q_t  \bigr] \quad \mbox{ with } \quad q_t := \frac{\exp \bigl( - \int_{t_0}^t V(X_s)ds \bigr)}{ \E \exp \bigl( - \int_{t_0}^t V(X_s)ds \bigr) } \quad \mbox{ for  } t \in [t_0,T] 
\label{eq:stochasticrepresentation}
\end{equation}
where $(X_t)_{t \in [t_0,T]}$ is the solution to 
$$ dX_t = \alpha_t(X_t) dt + dB_t.$$

From this representation we obtain the following time regularity for $\mu$. We denote by $d_1$ the first Wasserstein distance over $\mathcal{P}(\T^d)$. 

\begin{lem}
    Let $(\mu,\alpha)$ be a solution to \eqref{eq:conditionalFPE}. There is a non decreasing function $\Lambda : \R^+ \rightarrow \R^+$ independent from $\alpha,V,T$ such that
    $$ \sup_{t_1 < t_2 \in [t_0,T]} \frac{d_1 (\mu_{t_2}, \mu_{t_1})}{\sqrt{t_2- t_1}} \leq \Lambda \Bigl( \int_{t_0}^T \int_{\T^d} |\alpha_t(x)|^2 d\mu_t(x)dt + \norm{V}_{\mathcal{C}^0} + T \Bigr). $$
\end{lem}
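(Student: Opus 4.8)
The plan is to use the stochastic representation \eqref{eq:stochasticrepresentation} together with the Kantorovich--Rubinstein duality formula $d_1(\nu^1,\nu^2)=\sup\{\int_{\T^d}\varphi\,d(\nu^1-\nu^2): \varphi\ 1\text{-Lipschitz}\}$ on the compact space $\T^d$. Fix $t_1<t_2$ in $[t_0,T]$ and a $1$-Lipschitz $\varphi:\T^d\to\R$; subtracting a constant (which does not change $\int\varphi\,d(\mu_{t_2}-\mu_{t_1})$) we may assume $\norm{\varphi}_{\mathcal{C}^0}\le \mathrm{diam}(\T^d)=:C_d$. Write $M_t:=\exp(-\int_{t_0}^t V(X_s)\,ds)$ and $m_t:=\E[M_t]$, so that $q_t=M_t/m_t$ and $0\le V\le\norm{V}_{\mathcal{C}^0}$ gives the elementary bounds $M_t,m_t\le 1$, $m_t\ge e^{-\norm{V}_{\mathcal{C}^0}T}$. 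Then
\begin{align*}
\int_{\T^d}\varphi\,d(\mu_{t_2}-\mu_{t_1})
&=\E\bigl[\varphi(X_{t_2})q_{t_2}-\varphi(X_{t_1})q_{t_1}\bigr]\\
&=\E\bigl[(\varphi(X_{t_2})-\varphi(X_{t_1}))\,q_{t_1}\bigr]+\E\bigl[\varphi(X_{t_2})\,(q_{t_2}-q_{t_1})\bigr],
\end{align*}
and I would estimate the two terms separately.

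For the first term, lift $X$ to $\R^d$ and use $|\varphi(X_{t_2})-\varphi(X_{t_1})|\le \int_{t_1}^{t_2}|\alpha_s(X_s)|\,ds+|B_{t_2}-B_{t_1}|$. Since $q_{t_1}$ is $\mathcal{F}_{t_1}$-measurable with $\E[q_{t_1}]=1$ and $B_{t_2}-B_{t_1}$ is independent of $\mathcal{F}_{t_1}$, the Brownian part contributes at most $\E[|B_{t_2}-B_{t_1}|]\le\sqrt d\,\sqrt{t_2-t_1}$. For the drift part, the key point is that $0\le V\le\norm{V}_{\mathcal{C}^0}$ forces, for every $t\in[t_1,t_2]$, the comparison
$$q_{t_1}=q_t\cdot\frac{M_{t_1}/M_t}{m_{t_1}/m_t}\le e^{\norm{V}_{\mathcal{C}^0}(t-t_1)}q_t\le e^{\norm{V}_{\mathcal{C}^0}T}q_t,$$
so that by Fubini, $\E[|\alpha_s(X_s)|q_s]=\int_{\T^d}|\alpha_s(x)|\,d\mu_s(x)$ and Cauchy--Schwarz,
\begin{align*}
\E\Bigl[q_{t_1}\int_{t_1}^{t_2}|\alpha_s(X_s)|\,ds\Bigr]
&\le e^{\norm{V}_{\mathcal{C}^0}T}\int_{t_1}^{t_2}\int_{\T^d}|\alpha_s(x)|\,d\mu_s(x)\,ds\\
&\le e^{\norm{V}_{\mathcal{C}^0}T}\sqrt{t_2-t_1}\,\Bigl(\int_{t_0}^T\int_{\T^d}|\alpha_s(x)|^2\,d\mu_s(x)\,ds\Bigr)^{1/2}.
\end{align*}
For the second term, $|\E[\varphi(X_{t_2})(q_{t_2}-q_{t_1})]|\le C_d\,\E[|q_{t_2}-q_{t_1}|]$, and writing $q_{t_2}-q_{t_1}=\frac{(M_{t_2}-M_{t_1})m_{t_1}+M_{t_1}(m_{t_1}-m_{t_2})}{m_{t_1}m_{t_2}}$ together with $|M_{t_2}-M_{t_1}|=M_{t_1}(1-e^{-\int_{t_1}^{t_2}V(X_s)ds})\le\norm{V}_{\mathcal{C}^0}(t_2-t_1)M_{t_1}$, hence also $|m_{t_1}-m_{t_2}|\le\norm{V}_{\mathcal{C}^0}(t_2-t_1)m_{t_1}$, gives $\E[|q_{t_2}-q_{t_1}|]\le 2\norm{V}_{\mathcal{C}^0}e^{\norm{V}_{\mathcal{C}^0}T}(t_2-t_1)\le 2\norm{V}_{\mathcal{C}^0}e^{\norm{V}_{\mathcal{C}^0}T}\sqrt T\,\sqrt{t_2-t_1}$, since $t_2-t_1\le T$.

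Combining the two bounds and taking the supremum over $1$-Lipschitz $\varphi$ yields $d_1(\mu_{t_2},\mu_{t_1})\le\sqrt{t_2-t_1}\,\Lambda\bigl(\int_{t_0}^T\int_{\T^d}|\alpha_t(x)|^2\,d\mu_t(x)\,dt+\norm{V}_{\mathcal{C}^0}+T\bigr)$ for a non-decreasing $\Lambda$ depending only on $d$ (absorbing $C_d$, $\sqrt d$, the factors $e^{\norm{V}_{\mathcal{C}^0}T}$ and $\sqrt T$ into $\Lambda$ using $\norm{V}_{\mathcal{C}^0}T\le\tfrac14(\norm{V}_{\mathcal{C}^0}+T)^2$). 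The only genuinely non-routine ingredient is the comparison $q_{t_1}\le e^{\norm{V}_{\mathcal{C}^0}(t-t_1)}q_t$, which is what lets us trade the ``frozen'' weight $q_{t_1}$ for the running weight $q_t$ and thereby re-enter the energy $\int_{t_0}^T\int_{\T^d}|\alpha_t|^2\,d\mu_t\,dt$ appearing on the right-hand side; the rest is a direct computation exploiting $0\le V\le\norm{V}_{\mathcal{C}^0}$ and the independence of the Brownian increment from $\mathcal{F}_{t_1}$, with no Grönwall argument needed.
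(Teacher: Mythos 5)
Your proof is correct and takes essentially the same route as the paper's: Kantorovich--Rubinstein duality combined with the stochastic representation $\mu_t=\E[\delta_{X_t}q_t]$, splitting the increment into a part controlled through the SDE for $X$ (Cauchy--Schwarz in time to recover the energy $\int_{t_0}^T\int_{\T^d}|\alpha_t|^2d\mu_t\,dt$) and a part controlled by the time-Lipschitz continuity of $q_t$, with the weight comparison $q_{t_1}\le e^{\norm{V}_{\mathcal{C}^0}T}q_t$ playing exactly the role of the paper's bound on $\sup_t\norm{q_t^{-1}}_{L^{\infty}}$. The only cosmetic differences are that you pair the increment of $\varphi(X)$ with $q_{t_1}$ rather than $q_{t_2}$ and treat the Brownian increment via independence and $\E[q_{t_1}]=1$ instead of Cauchy--Schwarz against $\E[q_{t_2}^2]^{1/2}$.
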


\begin{proof}
    Take $t_1,t_2 \in [t_0,T]$ with $t_1 < t_2$. Let $\phi : \T^d \rightarrow \R$ be a $1$-Lipschitz function with $\phi(0)=0$. In particular, $|\phi(x)| \leq \sqrt{d}$ over $\T^d$. Then, 
\begin{align*}
    \int_{\T^d} \phi(x) d(\mu_{t_2} - \mu_{t_1})(x) &= \E \bigl[ \phi(X_{t_2}) q_{t_2} - \phi(X_{t_1}) q_{t_1}] \\
    &= \E \bigl[ \bigl(\phi(X_{t_2}) - \phi(X_{t_1}) \bigr) q_{t_2}] + \E \bigl[  \phi(X_{t_1}) (q_{t_2} - q_{t_1}) ] \\
    &\leq \E \bigl[ |X_{t_2}- X_{t_1}|^2 \bigr]^{1/2} \E \bigl[ q_{t_2}^2 \bigr]^{1/2} + \sqrt{d}\; \E \bigl[ |q_{t_2} - q_{t_1} | \bigr].
\end{align*}
On the one hand, we easily find from the definition of $q_t$ in \eqref{eq:stochasticrepresentation}, that 
$$ \sup_{t_2 \in [t_0,T]} \E \bigl[ q_{t_2}^2 \bigr]^{1/2} + \sup_{t_1 , t_2 \in [t_0,T]} \frac{\E \bigl[|q_{t_2} - q_{t_1}|]}{|t_2 -t_1|} \leq \Lambda \bigl( T + \norm{V}_{\mathcal{C}^0} \bigr), $$
for a non-decreasing function $\Lambda$ independent from $T$ and $V$. On the other hand, using the stochastic differential equation we get 
\begin{align*}
    \E \bigl[ |X_{t_2} - X_{t_1} |^2 \bigr]^{1/2} & \leq \E \bigl[ | \int_{t_1}^{t_2} \alpha_t(X_t) dt |^2 \bigr]^{1/2} + \E \bigl[ |B_{t_2} - B_{t_1} |^2 \bigr]^{1/2}.
\end{align*}
Cauchy-Schwarz inequality then gives
$$ \bigl| \int_{t_1}^{t_2} \alpha_t(X_t)dt \bigr|^2  \leq (t_2 - t_1) \int_{t_1}^{t_2} |\alpha_t(X_t)|^2 dt $$
and therefore
$$ \E \bigl[ |X_{t_2} - X_{t_1} |^2 \bigr]^{1/2} \leq \sqrt{t_2 -t_1} \E \bigl[ \int_{t_0}^T |\alpha_t(X_t)|^2 dt \bigr] + \sqrt{t_2- t_1}.$$
It remains to notice that 
$$ \E\bigl[\int_{t_0}^T |\alpha_t(X_t)|^2 dt \bigr] \leq \sup_{t \in [t_0,T]}\norm{ q_t^{-1}}_{L^{\infty}} \E \bigl[ \int_{t_0}^T |\alpha_t(X_t)|^2 q_tdt]  = \sup_{t \in [t_0,T]}\norm{ q_t^{-1}}_{L^{\infty}} \int_{t_0}^T \int_{\T^d} |\alpha_t(x)|^2 d\mu_t(x)dt  $$
and $$ \sup_{t \in [t_0,T]}\norm{ q_t^{-1}}_{L^{\infty}} \leq \Lambda( \norm{V}_{\mathcal{C}^0} + T \bigr) $$
for possibly another non-decreasing function $\Lambda$. All in all we find that
$$ \int_{\T^d} \phi(x) d(\mu_{t_2} - \mu_{t_1})(x) \leq \Lambda\Bigl( \int_{t_0}^T \int_{\T^d} |\alpha_t(x)|^2 d\mu_t(x)dt + \norm{V}_{\mathcal{C}^0} + T \Bigr) \sqrt{t_2-t_1} $$
for a new non-decreasing function $\Lambda$ independent from $T,V,(\mu,\alpha)$. The result follows by taking the supremum over the set of $1$-Lipschitz functions $\phi$.
\end{proof}

\begin{lem}
    Optimal solutions for Problem \ref{PbR}  exist and one of them 
satisfies the optimality conditions $ \alpha_t = - H_p^R(\nabla u_t) $ for some solution $(\mu_t, u_t)$ of the PDE system
    \begin{equation}
\left \{
    \begin{array}{ll}
    -\partial_t u_t - \frac{1}{2} \Delta u_t + H^R(\nabla u_t) + (V- \langle V; \mu_t \rangle) u_t  - V \langle u_t, \mu_t \rangle = 0 \quad \mbox{ in } (t_0,T) \times \T^d, \\
    \partial \mu_t - \frac{1}{2} \Delta u_t - \div(H^R_p(\nabla u_t) \mu_t) + (V- \langle V,\mu_t \rangle)\mu_t = 0 \quad \mbox{ in } (t_0,T) \times \T^d, \\
    \mu_{t_0} = \mu_0, \quad u_T = g \quad \quad \mbox{ in } \T^d.
    \end{array}
\right.
\end{equation}
\end{lem}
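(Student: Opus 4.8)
The plan is to prove existence of a minimizer by the direct method and then to extract the optimality system from a first-order perturbation of an optimal control; the nonlocal term $\lg V;\mu\rg$ is what will require care. First I would take a minimizing sequence $(\mu^n,\alpha^n)$ for the constrained problem \eqref{PbR}. Since $|\alpha^n_t|\le R$ on a $\mu^n_t\otimes dt$-full set, the momenta $m^n:=\alpha^n\mu^n$ form a bounded sequence of $\R^d$-valued measures on $[t_0,T]\times\T^d$ with $|m^n|\le R\,\mu^n$, and by the time-regularity estimate established just above the curves $(\mu^n)$ are equi-$1/2$-Hölder for $d_1$, hence relatively compact in $\mathcal{C}([t_0,T],\mathcal{P}(\T^d))$ by Arzelà--Ascoli. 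Along a subsequence $\mu^n\to\mu$ uniformly and $m^n\rightharpoonup m$ weakly-$\ast$; by the classical lower semicontinuity of the Benamou--Brenier functional (convexity and superlinearity of $v\mapsto|v|^2$) one gets $m=\alpha\mu$ with $|\alpha|\le R$ $\mu_t\otimes dt$-a.e.\ and $\int_{t_0}^T\!\!\int\frac12|\alpha_t|^2\,d\mu_t\,dt\le\liminf_n\int_{t_0}^T\!\!\int\frac12|\alpha^n_t|^2\,d\mu^n_t\,dt$. The uniform convergence gives $\lg V;\mu^n_t\rg\to\lg V;\mu_t\rg$ uniformly in $t$, so \eqref{eq:conditionalFPE} and the constraint \eqref{eq:Linftybound14/04} pass to the limit, and the terminal cost is continuous; hence $(\mu,\alpha)$ is admissible and optimal.

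\textbf{First-order conditions.} Next I would fix an optimal pair $(\mu,\alpha)$ and, for any admissible feedback $\gamma$ and $\epsilon\in[0,1]$, set $\alpha^\epsilon:=(1-\epsilon)\alpha+\epsilon\gamma$, which stays admissible by convexity of the ball of radius $R$; let $\mu^\epsilon$ solve \eqref{eq:conditionalFPE}. Using stability of the (nonlinear, nonlocal) Fokker--Planck equation in the spirit of Lemma \ref{lem:stability15/04}, one shows $\epsilon\mapsto\mu^\epsilon$ is differentiable at $0$ with $\rho_t:=\partial_\epsilon\mu^\epsilon_t|_{0}$ solving the linearised equation
$$\partial_t\rho_t-\tfrac12\Delta\rho_t+\div\!\big(\alpha_t\rho_t+(\gamma_t-\alpha_t)\mu_t\big)+(V-\lg V;\mu_t\rg)\rho_t-\lg V;\rho_t\rg\mu_t=0,\quad\rho_{t_0}=0,$$
the term $-\lg V;\rho_t\rg\mu_t$ being the linearisation of $\mu\mapsto(V-\lg V;\mu\rg)\mu$. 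Then I would introduce the adjoint state $u$ as the solution of the linear backward equation
$$-\partial_t u_t-\tfrac12\Delta u_t-\alpha_t\cdot\nabla u_t+(V-\lg V;\mu_t\rg)u_t-V\lg u_t;\mu_t\rg=\tfrac12|\alpha_t|^2,\quad u_T=g,$$
whose extra nonlocal term $-V\lg u_t;\mu_t\rg$ is exactly the formal adjoint of $\rho\mapsto-\lg V;\rho\rg\mu$; its well-posedness and parabolic regularity follow from a contraction argument for the nonlocal term as in \cite[Section~3.6.1]{CLL23}, together with the a priori estimates of Lemma \ref{lem:AprioriEstimatesHJB14/04}. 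Pairing $u_t$ with $\rho_t$ and using both equations, the $d\rho_t$ contributions telescope, leaving
$$\frac{d}{d\epsilon}\Big(\textstyle\int_{t_0}^T\!\!\int\tfrac12|\alpha^\epsilon_t|^2\,d\mu^\epsilon_t\,dt+\int g\,d\mu^\epsilon_T\Big)\Big|_{\epsilon=0}=\int_{t_0}^T\!\!\int(\alpha_t+\nabla u_t)\cdot(\gamma_t-\alpha_t)\,d\mu_t\,dt\ \ge\ 0,$$
the inequality holding because $\alpha$ is optimal and $\epsilon\ge0$.

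\textbf{Conclusion.} Localising this inequality over $\gamma$ yields, for $\mu_t\otimes dt$-a.e.\ $(t,x)$ and all $|c|\le R$, $(\alpha_t(x)+\nabla u_t(x))\cdot(c-\alpha_t(x))\ge0$, which is the variational characterisation of the constrained minimiser of $c\mapsto\tfrac12|c|^2+\nabla u_t(x)\cdot c$, i.e.\ $\alpha_t=-H^R_p(\nabla u_t)$ on the support of $\mu_t$. Since $\tfrac12|H^R_p(p)|^2-H^R_p(p)\cdot p=-H^R(p)$ for every $p$ (check the two regimes of \eqref{eq:defnHR}), substituting $\alpha_t=-H^R_p(\nabla u_t)$ turns the adjoint equation into \eqref{eq:HJBR}; the parabolic strong maximum principle applied to \eqref{eq:conditionalFPE} shows $\mu_t$ has a strictly positive density for $t\in(t_0,T]$, so the identity holds $dt\otimes dx$-a.e., and uniqueness for \eqref{eq:HJBR} (Lemma \ref{lem:AprioriEstimatesHJB14/04}) identifies $u$ with $u^R[\mu]\in\mathcal{C}^{1,2}$. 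Thus $(\mu,u)$ solves the announced system, with $-H^R_p(\nabla u_t)$ the canonical continuous representative of the optimal feedback and $\mu$ the solution of \eqref{eq:conditionalFPE} with drift $-H^R_p(\nabla u_t)$.

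\textbf{Main obstacle.} The delicate step is the second one. Because $\lg V;\mu\rg$ makes the forward equation nonlinear and nonlocal, one must linearise it correctly and justify differentiability of the flow $\epsilon\mapsto\mu^\epsilon$, and the adjoint equation is forced to carry the two extra terms $(V-\lg V;\mu_t\rg)u_t$ and $-V\lg u_t;\mu_t\rg$, so it cannot be handled by plain linear parabolic theory but requires a fixed-point argument — and enough regularity ($\mathcal{C}^{1,2}$) to run the duality computation. A subsidiary technical point is the bootstrap identifying this adjoint state with the Hamilton--Jacobi solution $u^R[\mu]$, which relies on the strict positivity of $\mu_t$; this is also what makes the crude ``one-line'' verification (comparing the cost along an arbitrary competitor to $u^R[\mu]$) fail, since it leaves a non-signed term $\int_{t_0}^T\lg V;\nu_t-\mu_t\rg\lg u_t;\nu_t-\mu_t\rg\,dt$, and forces the perturbation route above.
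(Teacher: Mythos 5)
Your proposal is correct and takes essentially the same route as the paper: the direct method for existence (the paper defers the compactness details to \cite[Proposition 1.2]{Daudin2021} and checks the constraint $|\alpha|\leq R$ by a duality test, as you do via $|m^n|\leq R\,\mu^n$), then the convex perturbation $\alpha^{\epsilon}=(1-\epsilon)\alpha+\epsilon\beta$ with the adjoint state — the derivative formula you rederive by pairing $u$ with the linearized state is exactly the one the paper cites from \cite[Section 3.6.3]{CLL23} — followed by the same endgame: the variational inequality gives $\alpha=-H^R_p(\nabla u)$ $\mu_t\otimes dt$-a.e.\ (you localize pointwise, the paper uses Young's inequality and a supremum over $\beta$, which is equivalent), and full support of $\mu_t$ for $t>t_0$ upgrades this to the coupled system. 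One minor remark: your sign $-V\langle u_t;\mu_t\rangle$ in the adjoint equation is the consistent one, since it is what turns into the stated HJB after substituting $\alpha=-H^R_p(\nabla u)$ and matches Lemma \ref{lem:estimatelinbackwardeq15/04}; the paper's displayed adjoint equation in its proof carries a $+V\langle u_t,\mu_t\rangle$, apparently a typo.
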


\begin{rmk}
    By Lemme \ref{lem:AprioriEstimatesHJB14/04}, the optimal control $\alpha$ is bounded independently of $R >0$. In particular, $\alpha_t = -\nabla u_t $ and $H^R (\nabla u_t) = \frac{1}{2} |\nabla u_t|^2$ as soon as $R >R_0$ with $R_0$ beind defined in Lemma \ref{lem:AprioriEstimatesHJB14/04}.
\end{rmk}

\begin{rmk}
    For now it is enough to prove that \textit{at least} one solution satisfies the optimality conditions. The general statement will be proven next.
\end{rmk}

\begin{proof}
\textit{Step 1. Existence.}  

We take a minmizing sequence $(\mu^n, \alpha^n)_{n \in \mathbb{N}}$. In particular, thanks to the previous lemma and the fact that $g$ is bounded over $\T^d$, we have the uniform bound
$$ \sup_{ n \in \mathbb{N}} \Bigl \{  \int_{t_0}^T \int_{\T^d} |\alpha^n_t(x)|^2 d\mu^n_t(x)dt + \sup_{t_1 <t_2 \in [t_0,T]} \frac{d_1(\mu^n_{t_2}, \mu^n_{t_1})}{\sqrt{t_2-t_1}}  \Bigr\} < +\infty. $$
We can then proceed as in \cite[Proposition 1.2]{Daudin2021}, to find $\mu \in \mathcal{C}^{1/2} \bigl([t_0,T], \bigl(\mathcal{P}(\T^d),d_1 \bigr) \bigr)$ and $\alpha \in L^2([t_0,T] \times \T^d, \mu_t \otimes dt; \R^d)$ such that $(\mu,\alpha)$ solves \eqref{eq:backwardHJB}, $\mu^n$ converges to $\mu$ in $\mathcal{C}([t_0,T], \mathcal{P}(\T^d))$, the finite vector-valued measures $(\alpha^n \mu_t^n \otimes dt)$ weak-$*$ converge to $\alpha \mu_t \otimes dt \in \mathcal{M}([t_0,T] \times \T^d; \R^d)$  and 
$$ \int_{t_0}^T \int_{\T^d} |\alpha_t(x)|^2 d\mu_t(x)dt \leq \liminf_{n \rightarrow +\infty} \int_{t_0}^T \int_{\T^d} |\alpha^n_t(x)|^2 d\mu^n_t(x)dt. $$
In particular, since $g$ is continuous, the terminal costs converge and we have
$$ J \bigl( (\mu , \alpha) \bigr) \leq \liminf_{n \rightarrow +\infty} J \bigl( (\mu^n , \alpha^n) \bigr).$$
The only point we need to check is that we can find such a limit point $(\mu,\alpha)$ satisfying the bound \eqref{eq:Linftybound14/04}. To this end we notice that, for all continuous function $f : [t_0,T] \times \T^d \rightarrow \R^d$ we have
$$ \int_{t_0}^T \int_{\T^d} \alpha^n_t(x) \cdot f_t(x) d\mu^n_t(x) \leq R \int_{t_0}^T \int_{\T^d} |f_t(x)| d\mu_t^n(x)dt.  $$
We can let $n \rightarrow +\infty$ in the inequality above to deduce that, for all continuous function $f : [t_0,T] \times \T^d \rightarrow \R^d$,
$$ \int_{t_0}^T \int_{\T^d} \alpha_t(x) \cdot f_t(x) d\mu_t(x) dt \leq R \int_{t_0}^T \int_{\T^d} |f_t(x)|d\mu_t(x)dt$$
and we deduce that $| \alpha(t,x)| \leq R$, $\mu_t \otimes dt$-almost-everywhere. 

\textit{Step 2. Variations.} This step is very similar to \cite[Section 3.6.4]{CLL23}. Thanks to \textit{Step 1} we can consider  a solution $(\mu, \alpha)$ (which a priori depends on $R$). Up to replacing $\alpha$ by $(t,x) \mapsto \alpha_t(x) \mathbf{1}_{|\alpha_t(x)| \leq R} $ we can assume that $|\alpha_t(x)| \leq R$ everywhere.

Now we take a measurable map $\beta : [t_0,T] \times \T^d \rightarrow \R^d$ with $ |\beta_t(x)| \leq R $ for all $(t,x) \in [t_0,T] \times \R^d$. For very $\epsilon \in [0,1]$ we let $\alpha^{\epsilon} := (1-\epsilon) \alpha + \epsilon \beta$ and $\mu^{\epsilon}$ the solution to
$$ \partial_t \mu_t^{\epsilon} + \div(\alpha_t^{\epsilon}\mu_t^{\epsilon}) - \frac{1}{2} \Delta \mu_t^{\epsilon} + (V-\langle V;\mu_t^{\epsilon} \rangle) \mu_t^{\epsilon} =0, \quad \mu^{\epsilon}_{t_0} = \mu_0,$$
given by the stochastic representation  \eqref{eq:stochasticrepresentation}.  
Following precisely \cite[Section 3.6.3]{CLL23}  we know that 
$$ \frac{d}{d\epsilon}\Big|_{\epsilon =0^+} J(\mu^{\epsilon}, \alpha^{\epsilon}) = \int_{t_0}^T \int_{\T^d} (\beta_t -\alpha_t)(x) \cdot ( \alpha_t(x) + \nabla u_t(x)) d\mu_t(x)dt.$$
where $u$ is solution to the adjoint equation (whose well-posedness is also discussed in \cite{CLL23})
$$-\partial_t u_t - \frac{1}{2} \Delta u_t- \alpha_t \cdot \nabla u_t + (V-\langle V; \mu_t \rangle)u_t + V \langle u_t,\mu_t \rangle = \frac{1}{2} |\alpha_t|^2, \quad u_T = g. $$
By minimality of $\alpha$ we deduce that, for all such $\beta$, it holds 
$$ \int_{t_0}^T \int_{\T^d} (\beta_t(x) - \alpha_t(x)) \cdot ( \alpha_t(x) + \nabla u_t(x) ) d\mu_t(x) dt \geq 0.$$
Rearranging and using Young's inequality $ \beta_t \cdot \alpha_t \leq \frac{1}{2} |\alpha_t|^2 + \frac{1}{2} |\beta_t|^2$ leads to
$$ \int_{t_0}^T \int_{\T^d} \bigl( - \beta_t(x) \cdot \nabla u_t(x) - \frac{1}{2} |\beta_t(x)|^2 \bigl) d\mu_t(x)dt \leq  \int_{t_0}^T \int_{\T^d} \bigl( - \alpha_t(x) \cdot \nabla u_t(x) - \frac{1}{2} |\alpha_t(x)|^2 \bigl) d\mu_t(x)dt. $$
Taking the supremum over $\beta$ and recalling the definition of $H^R$ we obtain 
$$\int_{t_0}^T \int_{\T^d} H^R( \nabla u_t(x)) d\mu_t(x) dt \leq  \int_{t_0}^T \int_{\T^d} \bigl( - \alpha_t(x) \cdot \nabla u_t(x) - \frac{1}{2} |\alpha_t(x)|^2 \bigl) d\mu_t(x)dt. $$
However, for all $(t,x) \in [t_0,T] \times \R^d$ we have 
$$- \alpha_t(x) \cdot \nabla u_t(x) - \frac{1}{2} |\alpha_t(x)|^2 \leq H^R( \nabla u_t(x)),  $$
and therefore, we can conclude that
$$ - \alpha_t(x)\cdot \nabla u_t(x) - \frac{1}{2} |\alpha_t(x)|^2 = H^R( \nabla u_t(x)), \quad \mu_t \otimes dt - \mbox{ almost-everywhere.}$$
Equivalently, in the support of $\mu_t \otimes dt$ we have
$$ \alpha_t(x) = -H^R_p( \nabla u_t(x)).  $$
Collecting the equations for $\mu$ and $u$ we have found an optimal control $\alpha$ with associated curve $\mu$ and multiplier $u$ such that $\alpha = -H_p^R( \nabla u)$ and $(\mu, u)$ solves the coupled pde system 
\begin{equation}
\left \{
    \begin{array}{ll}
    -\partial_t u_t - \frac{1}{2} \Delta u_t + H^R( \nabla u_t) + (V- \langle V; \mu_t \rangle) u_t  - V \langle u_t; \mu_t \rangle = 0 \quad \mbox{ in } (t_0,T) \times \T^d, \\
    \partial \mu_t - \frac{1}{2} \Delta u_t - \div(H^R_p(\nabla u_t) \mu_t) + (V- \langle V;\mu_t \rangle)\mu_t = 0 \quad \mbox{ in } (t_0,T) \times \T^d, \\
    \mu_{t_0} = \mu_0, \quad u_T = g \quad \quad\mbox{ in } \T^d.
    \end{array}
\right.
\end{equation}
In particular, $\mu$ is driven by a Lipschitz continuous vector field and therefore, for all $t \in (t_0,T]$, $\mu_t \otimes dt$ has full support and the equations are satisfied everywhere.  This concludes the proof of the Lemma.
\end{proof}

We can finally conclude with the Proof of Theorem \ref{thm:OptimalityConditons}.

\begin{proof}[Proof of Theorem \eqref{thm:OptimalityConditons}] Now we can apply Lemma  \ref{lem:AprioriEstimatesHJB14/04} to find $R_0>0$ such that, for any $R >0$ there is a solution $\alpha^R$ to the control problem \eqref{PbR} such that $| \alpha_t(x)^R| \leq R_0$ for all $(t,x) \in [t_0,T] \times \T^d$. Let $\mathcal{V}^R(t_0,\mu_0)$ be the minimal value of \eqref{PbR}. We have proved that, for all $R \geq R_0,$ $\mathcal{V}^R = \mathcal{V}^{R_0}$ (we don't do any better by allowing controls with $L^{\infty}$-norm greater than $R$). By definition of $\mathcal{V}$ and $\mathcal{V}^R$, we have $ \lim_{R \rightarrow +\infty} \mathcal{V}^R(t_0,\mu_0) = \mathcal{V}(t_0,\mu_0)$.
Notice that this limit was not trivial if the original problem was defined over controls in $L^2([t_0,T] \times \T^d, \mu_t \otimes dt; \R^d)$. However, with controls in $L^{\infty}$ it is plain. Hence, we deduce that $\mathcal{V}(t_0,\mu_0) = \mathcal{V}^{R_0}(t_0,\mu_0)$ and therefore there is at least one solution to the original problem satisfying the optimality conditions.

It remains to justify that \textit{any} solution to the original problem must satisfy the optimality conditions.  
The argument is similar to \cite[Lemma 3.1]{Daudin2021}. We take $(\tilde{\mu}, \tilde{\alpha})$ a solution to the original problem \eqref{def:limitvfV} and we consider a new problem 
\begin{equation}
\inf_{(\mu, \alpha)} J(\mu, \alpha) + \int_{t_0}^T q(\mu_t, \tilde{\mu_t})dt
\label{newproblem}
\end{equation}
for some smooth (square)-distance-like function $q: \mathcal{P}(\T^d) \times \mathcal{P}(\T^d) \rightarrow \R$ satisfying $q (\mu^1, \mu^2) \geq 0$ and $q(\mu^1, \mu^2) = 0,$ if and only if $\mu^1 = \mu^2$ as well as 
\begin{equation} 
\frac{\delta q}{\delta \mu^1}(\mu^1, \mu^1,x)= 0 \quad \forall \mu^1 \in \mathcal{P}(\T^d) \quad \forall x \in \T^d.
\label{eq:proplinderivative14/04}
\end{equation}
For instance $q$ can be the squared negative Sobolev norm 
$$ q(\mu_1, \mu_2) := \norm{\mu_2 - \mu_1}^2_{H^{-k}} $$
with $k > d/2 +2$.

Since $(\tilde{\mu}, \tilde{\alpha})$ is a minimum of $J$ and $\int_{t_0}^T q(\mu_t, \tilde{\mu}_t)dt >0$ whenever $\mu \neq \tilde{\mu},$ any minimizer $(\mu, \alpha)$ of the new cost must verify $\mu = \tilde{\mu}.$ We take $(\tilde{\mu}, \alpha)$ another minimizer, $\lambda \in [0,1]$ and let $\alpha^{\lambda} := (1- \lambda) \tilde{\alpha} + \lambda \alpha$. We easily check that for all $\lambda \in [0,1]$ $(\tilde{\mu}, \alpha^{\lambda})$ solves the Fokker-Planck equation and is admissible. Comparing the costs we find that
$$ \int_{t_0}^T \int_{\T^d} |\alpha_t^{\lambda}(x)|^2 d\tilde{\mu}_t(x) dt =  \int_{t_0}^T \int_{\T^d} |\tilde{\alpha}_t(x)|^2 d\tilde{\mu}_t(x) dt, \quad \forall \lambda \in [0,1]. $$
By strict convexity of the square function we deduce that $ \alpha = \tilde{\alpha}, $ $ \tilde{\mu}_t \otimes dt$-  almost-everywhere. As a conclusion Problem \eqref{newproblem} admits exactly one solution which is $(\tilde{\mu},\tilde{\alpha})$. We argue as before for Problem \eqref{newproblem} and deduce that $|\tilde{\alpha}| \leq R_0$ and $(\tilde{\mu}, \tilde{\alpha})$ satisfies the optimality conditions. Notice that the distance-like function $q$ does not appear in the optimality conditions thanks to the property \eqref{eq:proplinderivative14/04} of its linear derivative.

To conclude the proof of the theorem, it remains to justify that $u_t$ satisfies the estimate \eqref{eq:estimateforu15/04}. This is easily done by induction, using Duhamel's representation formula, see  \cite[Lemma A.3]{ddj2023}.
\end{proof}

\bibliographystyle{plain}

\end{document}